\newtheorem{teo}{Theorem}[section]
\newtheorem{prop}{Proposition}[section]
\newtheorem{lemma}{Lemma}[section]
\newtheorem{assumption}{Assumption}[section]
\theoremstyle{definition}
\newtheorem{defi}[teo]{Definition}
\theoremstyle{remark}
\begin{document}

\author{Manuel Guerra\thanks{CEMAPRE, Instituto Superior de Economia e Gest\~ao, Universidade de Lisboa, Rua do Quelhas 6, Lisbon, Portugal, Email: mguerra@iseg.ulisboa.pt}, \
Cl\'audia Nunes\thanks{Department of Mathematics and
CEMAT, Instituto Superior T\'ecnico, Universidade de
Lisboa, Av. Rovisco Pais, 1049-001 Lisboa, Portugal,
Email: cnunes@math.tecnico.ulisboa.pt}
\ and Carlos Oliveira\thanks{Department of Mathematics and
CEMAT, Instituto Superior T\'ecnico, Universidade de
Lisboa, Av. Rovisco Pais, 1049-001 Lisboa, Portugal,
Email: carlosmoliveira@tecnico.ulisboa.pt}}

\title{\sc Optimal stopping of one-dimensional diffusions with integral criteria}

\maketitle

\begin{abstract}

This paper provides a full characterization of the value function and solution(s) of an optimal stopping problem for a one-dimensional diffusion with an integral criterion.
The results hold under very weak assumptions, namely,
the diffusion is assumed to be a weak solution of stochastic differential equation satisfying the Engelbert-Schmidt conditions, while the (stochastic) discount rate and the integrand are required to satisfy only general integrability conditions.

\mbox{}
\newline
{\bf Keywords and phrases.\/} Optimal stopping; one-dimensional diffusion; one-dimensional SDE; integral functional.


\mbox{}
\newline
{\bf AMS (2010) Subject Classifications.\/} Primary 60G40;
secondary 60H10, 93E20.
\end{abstract}

%
%

\section{Introduction}
\label{S introduction}

Optimal stopping problems attracted generations of mathematicians due to both their interesting mathematical characteristics and their important applications.  
Early work was developped by Dynkin \cite{dynkin1963optimal}, Grigelionis and Shiryaev \cite{grigelionis1966stefan}, Dynkin and Yushkevich \cite{DynkinYushkevich69}.
A general theory can be found in books by Shiryaev \cite{Shiryaev1978} and Peskir and Shiryaev \cite{Shiryaev}.
Several methods have been developed to deal with this type of problems.

Methods based on excessive functions date back to the pioneer work of 
Dynkin \cite{dynkin1963optimal}, 
and have been used by, among others, 
Dynkin and Yushkevich \cite{DynkinYushkevich69}, 
Fakeev \cite{Fakeev71}, 
Thompson \cite{Thompson71}, 
Shiryaev \cite{Shiryaev1978}, 
Salminen \cite{salminen1983optimal}, 
Alvarez \cite{Alvarez03}, 
Dayanik and Karatzas \cite{dayanik2003optimal}, 
Lamberton and Zervos \cite{lamberton2013optimal}, 
among others.
These methods are tightly connected with the concavity and monotonicity properties of the value function.

An alternative approach 
based on variational methods and inequalities was pioneered by
Grigelionis and Shiryaev \cite{grigelionis1966stefan},
and
Bensoussan and Lions \cite{BensoussanLions73}.
It was used in many works, namely 
Nagai \cite{nagai1978optimal}, 
Friedman \cite{Friedman76},
Krylov \cite{krylov2008controlled}, 
Bensoussan and Lions \cite{bensoussan2011applications}
\O ksendal \cite{Oksendal},
Lamberton \cite{Lamberton09}, 
Lamberton and Zervos \cite{lamberton2013optimal}, 
R\"uschendorf and Urusov \cite{ruschendorf2008class},
Belomestny, R\"uschendorf and Urusov \cite{belomestny2010optimal},
among others.
Usually this approach requires some regularity assumptions on the problem's data and on the value function.
Progress has been made in relaxing these assumptions, showing that the value function satisfies the appropriate variational inequality in various weak senses (see, for example,
Friedman \cite{Friedman76},
Nagai \cite{nagai1978optimal}, 
Zabczyk \cite{Zabczyk84},
\O ksendal and Reikvam \cite{OksendalReikvam98},
Bassan and Ceci \cite{BassanCeci02},
Bensoussan and Lions \cite{bensoussan2011applications},
Lamberton \cite{Lamberton09}, 
Lamberton and Zervos \cite{lamberton2013optimal}).
The variational approach allows for the development of some effective numerical methods 
(see, for example,
Glowinski, Lions and Tr{\'e}moli{\`e}res \cite{glowinski1981numerical},
or
Zhang \cite{Zhang94}).

A third approach, based on change of measure techniques and martingale theory, was introduced by 
Beibel and Lerche \cite{BeibelLerche97,BeibelLerche02},
and was further developed by several authors, namely
Alvarez \cite{Alvarez03,Alvarez04,Alvarez08},
Lerche and Urusov \cite{LercheUrusov07},
Lempa \cite{Lempa10},
Christensen and Irle \cite{ChristensenIrle11}.
This approach proved successful in characterizing the optimal strategy at any given point of the state space.

In this paper we consider the optimal stopping problem of a general diffusion when the optimality criterion is an integral functional.
More precisely, we seek the stopping time $\hat \tau$ maximizing the expected outcome
\begin{equation}
\label{Eq outcome}
J(x,\tau) = \mathbb E_x \left[ \int_0^\tau e^{-\rho_s} \Pi(X_s) ds \right] ,
\end{equation}
where 
\begin{equation}
\label{Eq discount factor}
\rho_t=\int_0^t r(X_s) ds \qquad \forall t \geq 0, 
\end{equation}
and $X$ solves the stochastic differential equation
\begin{equation}
\label{Eq SDE}
dX_t = \alpha (X_t) dt + \sigma (X_t) dW_t .
\end{equation}
$\mathbb E_x$ means expected value conditional on $X_0=x$,
$W$ is a standard Brownian motion and $r$, $\alpha $, $\sigma $ and $\Pi$ are measurable real functions, satisfying minimal assumptions discussed in Section \ref{S problem setting} below. 
In particular, the functions $r$, $\alpha $, $\sigma $ and $\Pi$ may be discontinuous.
As usual, $\tau$ is an admissible stopping time if and only if it is a stopping time with respect to the filtration generated by the process $X$. 

This class of optimal stopping problems has received  little attention compared with optimal stopping problems where the functional being maximized is of type
\begin{equation}
\label{Eq terminal criterion}
\tilde J(x, \tau ) = \mathbb E_x \left[ e^{-\rho_\tau} \Pi (X_\tau) \chi_{\tau < +\infty} \right] .
\end{equation}
This is understandable, since the functional \eqref{Eq terminal criterion} arises naturally in many applications, particularly in the theory of American Options in mathematical finance.
However, the problem \eqref{Eq outcome}--\eqref{Eq discount factor}--\eqref{Eq SDE} also has important applications, among others, in the theories of Asian Options and Real Options.
Further, some known problems in the literature of optimal stopping and stochastic control can be reduced to the form \eqref{Eq outcome}--\eqref{Eq discount factor}--\eqref{Eq SDE} (see for example, Graversen, Peskir and Shiryaev \cite{graversen2001stopping}, and Karatzas and Ocone \cite{karatzas2002leavable}).

Our approach is closely related to the works of 
R{\"u}schendorf and Urusov \cite{ruschendorf2008class},
and
Belomestny, R{\"u}schendorf and Urusov \cite{belomestny2010optimal}.
We show that the value function solves a variational inequality in the Carath\'eodory sense.
Thus, it is a continuously differentiable function with absolutely continuous first derivative, and it is not necessary to consider further weak solutions.
The free boundary is fixed by a \textit{$C_1$ fit condition}, coupled with a \textit{global} non-negativity condition.
Notice that the necessity (or not) of a \textit{smooth fit principle} is a topic of current literature. 
For instance, works by 
Dayanik and Karatzas \cite{dayanik2003optimal} (section 7), 
Villeneuve \cite{villeneuve2007threshold},
R{\"u}schendorf and Urusov \cite{ruschendorf2008class}, 
Belomestny, R{\"u}schendorf and Urusov \cite{belomestny2010optimal}, 
and 
Lamberton and Zervos \cite{lamberton2013optimal}, 
prove that in certain cases, the \textit{smooth fit principle} holds. 
This contrasts with works by 
Salminen \cite{salminen1983optimal}, 
Peskir \cite{peskir2007principle}, 
and 
Samee \cite{samee2010principle}, 
which find examples where the \textit{smooth fit principle} fails.

R{\"u}schendorf and Urusov \cite{ruschendorf2008class} 
and
Belomestny, R{\"u}schendorf and Urusov \cite{belomestny2010optimal} 
deal with the problem \eqref{Eq outcome}--\eqref{Eq discount factor}--\eqref{Eq SDE} assuming that the function $\Pi$ is of so-called ``two-sided form''.
The corresponding variational inequality is solved assuming a priori that the value function coincides on its support with the solution of an ordinary differential equation with two-sided zero boundary condition.
Therefore, the method does not provide any information in cases when the value function is of some other form (e.g., a solution of the differential equation with only one-sided zero boundary condition), even if $\Pi$ belongs to the restricted class of functions of ``two-sided form''.
In this paper, we solve the variational inequality without assuming any particular behaviour for $\Pi$ or the value function, obtaining a characterization of the value function in terms of $\Pi$ and the fundamental solution of a system of linear differential equations.
As can be expected with this generality, the value function can assume many different forms, but it can always be found, at least on a given compact interval, by solving a finite-dimensional system of nonlinear equations.
In particular, we address the issues raised in the remarks after Theorem 2.2 and in the remarks after Theorem 2.3 of R{\"u}schendorf and Urusov \cite{ruschendorf2008class}, as well as in the remarks after Theorem 2.2 of Belomestny, R{\"u}schendorf and Urusov \cite{belomestny2010optimal}.

Lamberton and Zervos \cite{lamberton2013optimal} show that the value function for the problem \eqref{Eq terminal criterion}--\eqref{Eq discount factor}--\eqref{Eq SDE}  is the difference between two convex functions.
Every function with absolutely continuous first derivative can be represented as the difference between two convex functions, but the converse is not true, since the derivative of a convex function can have countably many points of discontinuity.
Thus our results show that the value function for the problem \eqref{Eq outcome}--\eqref{Eq discount factor}--\eqref{Eq SDE} is somewhat more regular than the solutions in \cite{lamberton2013optimal}.
On the other hand,
Dayanik and Karatzas \cite{dayanik2003optimal} 
proved that the value function of \eqref{Eq terminal criterion}--\eqref{Eq discount factor}--\eqref{Eq SDE} is concave with respect to the scale function of the process $X$.
We show that this result does not extend to the problem \eqref{Eq outcome}--\eqref{Eq discount factor}--\eqref{Eq SDE}, providing an example where the value function does not admit any strictly increasing function $F$ with respect to which it is $F$-concave.

This paper is organized as follows.
Section \ref{S problem setting} contains the complete definition of problem \eqref{Eq outcome}--\eqref{Eq discount factor}--\eqref{Eq SDE}, with the formulation of our working assumptions.
Section \ref{S background} contains an outline of some elementary background material and sets some notation not introduced in Section \ref{S problem setting}.
Section \ref{S results} contains the main results in the paper and some discussion on their usage to solve problems of type \eqref{Eq outcome}--\eqref{Eq discount factor}--\eqref{Eq SDE}.
Proofs of these results are postponed to Section \ref{S proofs}.
Section \ref{S example} contains some examples of solutions of optimal stopping problems.

\section{Problem setting}\label{S problem setting}

Let $\alpha , r , \Pi: I \mapsto \mathbb R$, $\sigma : I \mapsto ]0,+\infty[$ be Borel-measurable functions, where $I = ] m,M[$ is an open interval with $-\infty \leq m < M \leq + \infty$.
$\overline I = I \cup \{ \infty \}$ denotes  the one-point (Aleksandrov) compactification of $I$.

\begin{assumption}
\label{Assumption Engelbert}
The functions $\frac{1}{\sigma^2}$, $\frac{\alpha}{\sigma ^2}$ are locally integrable with respect to the Lebesgue measure in $I$. 
\end{assumption}

By Theorem 5.15 in chapter 5 of \cite{karatzas2012brownian}, Assumption \ref{Assumption Engelbert} guarantees existence and uniqueness (in law) of a weak solution for the stochastic differential equation \eqref{Eq SDE}, up to explosion time.
In all the following, $(\Omega, \mathcal F, \{ \mathcal F_t \}_{t \geq 0} , P, X, W)$ denotes a given weak solution up to explosion time of equation \eqref{Eq SDE}.
$\tau_I$ denotes the explosion time, and the process $X$ is extended to the time interval $[0,+\infty[$ by setting $X_t=\infty$ for $t \geq \tau_I$.
We extend the functions $r$, $\Pi$ into $\overline{ I}$, setting $r(\infty ) = 0$, $\Pi(\infty ) = - \infty $.
Thus, the processes $r(X_t)$, $\Pi(X_t)$ are well defined on the time interval $[0,+\infty[$.

For every $t \geq 0$, $\mathcal F_t^X$ is the $\sigma$-algebra generated by $\{ X_s\}_{0 \leq s \leq t}$, augmented with all the $P$-null events.
The set of admissible stopping times for expression \eqref{Eq outcome}, denoted by $\mathcal T$, is the set of all stopping times adapted to the filtration $\{ \mathcal F^X_t \}_{t \geq 0}$.

For any real-valued function $f$, we set 
\begin{equation}
\notag
f^+(x) = \max (f(x), 0),
\qquad
f^-(x) = \max (-f(x), 0) .
\end{equation}

Besides Assumption \ref{Assumption Engelbert}, we take the following assumptions concerning the functional \eqref{Eq outcome}:

\begin{assumption}
\label{Assumption r}
The function $\frac{r}{\sigma^2}$ is locally integrable with respect to the Lebesgue measure in $I$. 
\end{assumption}

\begin{assumption}
\label{Assumption Pi}
The function $\frac{\Pi}{\sigma^2}$ is locally integrable with respect to the Lebesgue measure in $I$,
the sets $\left\{ x \in I:\Pi(x) >0 \right\}$ and $\left\{ x \in I:\Pi(x) <0 \right\}$ have both positive Lebesgue measure, and
\begin{equation}
\notag 
\mathbb E_x \left[ \int_0^{\tau_I} e^{-\rho_t} \Pi^+ ( X_t) dt \right] < + \infty
\qquad \forall x \in I .
\end{equation}
\end{assumption}

It turns out (see Proposition \ref{P assumption Pi vs Pia}) that Assumption \ref{Assumption Pi} is equivalent to the apparently weaker:
\begin{assumption}
\label{Assumption Pia}
The function $\frac{\Pi}{\sigma^2}$ is locally integrable with respect to the Lebesgue measure in $I$,
the sets $\left\{ x \in I:\Pi(x) >0 \right\}$ and $\left\{ x \in I:\Pi(x) <0 \right\}$ have both positive Lebesgue measure, and there is some $x \in I$ such that
\begin{equation}
\notag
\mathbb E_x \left[ \int_0^{\tau_I} e^{-\rho_t} \Pi^+ ( X_t) dt \right] < + \infty .
\end{equation}
\end{assumption}

We will see in Section \ref{S background} that local integrability of $\frac \alpha{\sigma^2}$, $\frac r{\sigma^2}$ and $\frac \Pi{\sigma^2}$ is necessary and sufficient for existence of solution for the equation \eqref{Eq ODE} and therefore, it is necessary for existence of solution of the variational inequality \eqref{Eq HJB}.
Further, if the set $\left\{ x \in I:\Pi(x) >0 \right\}$ is negligible, then $\tau \equiv 0$ is trivially optimal.
Conversely, when the set $\left\{ x \in I:\Pi(x) <0 \right\}$ is negligible, then $\tau_I$ is trivially optimal.
Taking into account the equivalence between Assumptions \ref{Assumption Pi} and \ref{Assumption Pia}, 
if $\mathbb E_x \left[ \int_0^{\tau_I} e^{-\rho_t} \Pi^+ ( X_t) dt \right] = + \infty $ and $\mathbb E_x \left[ \int_0^{\tau_I} e^{-\rho_t} \Pi^- ( X_t) dt \right] < + \infty $ then $\tau_I$ is trivially optimal.
If $\mathbb E_x \left[ \int_0^{\tau_I} e^{-\rho_t} \Pi^+ ( X_t) dt \right] = \mathbb E_x \left[ \int_0^{\tau_I} e^{-\rho_t} \Pi^- ( X_t) dt \right] = + \infty $ then, the functional \eqref{Eq outcome} is not well defined at least for some stopping times $\tau \in \mathcal T$.
Thus, the Assumption \ref{Assumption Pi}/\ref{Assumption Pia} excludes some trivial cases and some ill-posed cases.

The optimal stopping problem considered in this paper consists of finding the maximizers of \eqref{Eq outcome} over the set $\mathcal T$. This is equivalent to finding the \emph{value function}
\begin{equation}
\label{Eq value function}
V(x) = \sup_{\tau \in \mathcal{T}} \mathbb E_x \left[ \int_0^\tau e^{-\rho_s} \Pi (X_s) ds \right] .
\end{equation}
Since the strategy $\tau \equiv 0$ (to stop immediately, regardless of the current state $X_0$) has zero payoff, it is obvious that $V$ is a non-negative function.
An optimal stopping time is given by the rule
\begin{equation}
\notag
\tau = \inf \left\{ t\geq 0 : V(X_t) = 0 \right\} .
\end{equation}

\section{Background and notation}\label{S background}

Taking into account the general results relating variational inequalities with optimal stopping (see, e.g. Peskir and Shiriaev \cite{Shiryaev} or Krylov \cite{krylov2008controlled}), it is expected that the value function \eqref{Eq value function} satisfies the Hamilton-Jacobi-Bellman equation
\begin{equation}\label{Eq HJB}
\min\left\{r(x) v(x)-\alpha(x) v^\prime (x)-\frac{\sigma(x)^2}{2} v^{\prime \prime} (x)-\Pi(x), v(x)\right\}=0 .
\end{equation}
Often, similar variational inequalities are presented in  slightly different forms, as free boundary problems, as in Grigelionis and Shiryaev \cite{grigelionis1966stefan}.
Obviously any solution $v$ of \eqref{Eq HJB} must coincide with a solution of the ordinary differential equation
\begin{equation}\label{Eq ODE}
r(x) v(x)-\alpha(x) v^\prime (x)-\frac{\sigma(x)^2}{2} v^{\prime \prime} (x)-\Pi(x)=0,
\end{equation}
in any interval where $v(x)>0$.
The equation \eqref{Eq ODE} is equivalent to the system of first-order differential equations
\begin{equation}\label{Eq system ODE}
w'(x)=A(x)w(x)+b(x),
\end{equation} 
where 
\begin{equation}
\notag
w(x)=\begin{pmatrix}
   v(x) \\
  v'(x)
 \end{pmatrix},\quad b(x)=\begin{pmatrix}
   0 \\
  -\frac{2\Pi (x)}{\sigma(x)^2}
 \end{pmatrix}\quad\text{and}\quad A(x)=\begin{pmatrix}
   0& 1 \\
  \frac{2r(x)}{\sigma(x)^2}& - \frac{2\alpha(x)}{\sigma(x)^2}
 \end{pmatrix}.
\end{equation}
Solutions for the system \eqref{Eq system ODE} are understood in the Carath\'eodory sense, that is, $w:I \mapsto \mathbb R^2$ is said to be a solution of \eqref{Eq system ODE} if it is absolutely continuous and satisfies
\begin{equation}
\notag
w(x) = w(a) + \int_a^x A(z) w(z) + b(z) dz 
\qquad 
\forall x \in I ,
\end{equation}
where $a$ is an arbitrary point of $I$.
Thus, the solutions of equation \eqref{Eq ODE} are continuously differentiable functions with absolutely continuous first derivatives.
Similarly, we say that a function $v$ is a solution of the Hamilton-Jacobi-Bellman equation \eqref{Eq HJB} if and only if $v$ is continuously differentiable, its first derivative is absolutely continuous, and $v$ satisfies \eqref{Eq HJB} almost everywhere with respect to the Lebesgue measure.
In other words, any solution $v$ of  equations \eqref{Eq HJB} or \eqref{Eq ODE} can be written as the difference between two convex functions with absolutely continuous derivatives.
This class of functions is a subset of the class used in \cite{lamberton2013optimal}, but we do not use this fact in this paper.

Let
\begin{equation}
\notag
\Phi (x,y)=\left( \begin{array}{cc}
\phi_{11}(x,y) & \phi_{12}(x,y) \\
\phi_{21}(x,y) & \phi_{22}(x,y)
\end{array} \right)
\end{equation}
be the fundamental solution of the homogeneous system $w^\prime = Aw$. 
That is, $\Phi$ the unique solution of the matrix differential equation
\begin{equation}
\notag
\frac{\partial}{\partial y}\Phi(x,y)=A(y)\Phi(x,y) ,
\qquad
\Phi(x,x) = Id 
\end{equation}
where $Id$ represents the identity matrix.

The Assumptions \ref{Assumption Engelbert} and \ref{Assumption r} are necessary and sufficient for existence of $\Phi(x,y)$ for every $x,y \in I$.
The additional Assumption \ref{Assumption Pi} guarantees existence of one unique solution for the non-homogeneous system \eqref{Eq system ODE} defined in the whole interval $I$, for every initial condition $v(a) = \hat v_1$, $v^\prime (a) = \hat v_2$ with $a \in I$, $\hat v_1, \hat v_2 \in \mathbb R$.
Any solution of \eqref{Eq system ODE} can be written in the form
\begin{equation}
\label{Eq solution system ODE}
w(x) = 
\Phi(a,x) \left( w(a) + \int_a^x \Phi(a,z)^{-1} b(z) dz \right) =
\Phi(a,x)w(a) + \int_a^x \Phi(z,x) b(z) dz ,
\end{equation}
where $a$ is an arbitrary point of $I$.
That is, any solution of \eqref{Eq ODE} can be written in the form
\begin{equation}
\label{Eq v nonhomogeneous} 
v(x) = 
v(a) \phi_{11}(a,x) + 
v^\prime (a)\phi_{12}(a,x) -
\int_a^{x}\frac{2\Pi(z)}{\sigma(z)^2}\phi_{12}(z,x)dz ,
\qquad
\forall x \in I .
\end{equation}
For any $a,b \in I$, with $a< b$, and any $d \in \mathbb R$, we introduce the functions
\begin{align}
& \label{Eq v_ad}
v_{a,d} (x) = d \phi_{12}(a,x) - \int_a^x \frac{2 \Pi (z)}{\sigma(z)^2} \phi_{12}(z,x) dz & x \in I,
\\ & \label{Eq vab}
v^{[a,b]}(x) = \frac{\int_a^b \frac{2 \Pi (z)}{\sigma(z)^2} \phi_{12}(z,b) dz}{\phi_{12}(a,b)} \phi_{12}(a,x) - \int_a^x \frac{2 \Pi (z)}{\sigma(z)^2} \phi_{12}(z,x) dz & x \in I.
\end{align}
These functions are, respectively, the solution of \eqref{Eq ODE} with initial conditions $v(a)=0$, $v^\prime (a) = d$, and the solution of \eqref{Eq ODE} with boundary conditions $v(a)=v(b)=0$.
We will show below (Proposition \ref{P sign phi_12}) that Assumption \ref{Assumption Pi} implies $\phi_{12}(a,b)>0$ for every $m<a<b<M$ and hence $v^{[a,b]}$ is well defined and is the unique solution of the corresponding boundary value problem. 
Belomestny, R{\"u}schendorf and Urusov \cite{belomestny2010optimal} proved a similar result using the probabilistic representation of such equation \eqref{Eq ODE}.
We provide a shorter and more general proof using classical arguments from the theory of ordinary differential equations.

If $a=m$ or $b=M$ (or both), then we can pick monotonic sequences $a_n,b_n \in ]a,b[$  such that $\lim\limits_{n \rightarrow \infty} a_n=a$ and $\lim\limits_{n \rightarrow \infty} b_n=b$.
If there is a function $v:]a,b[ \mapsto \mathbb R$ such that 
\begin{equation}
\notag
\lim\limits_{n \rightarrow \infty} v^{[a_n,b_n]} (x) = v(x) 
\end{equation}
for every $x \in ]a,b[$ and every sequences $a_n, \ b_n$ as above, then we denote that function by $v^{[a,b]}$.
Notice that in the case $a=m$ (resp., $b=M$), the definition above does not imply that $\lim\limits_{x \rightarrow a} v^{[a,b]}(x) =0$ (resp., $\lim\limits_{x \rightarrow b} v^{[a,b]}(x) =0$).
We will be specially interested in intervals such that \begin{equation}
\label{Eq local positive curve}
a<b
\qquad \text{and} \qquad
v^{[a,b]} (x) > 0 \ \ \forall x \in ]a,b[ .
\end{equation}
Thus, we introduce the following definition.
\begin{defi}
\label{D maximal interval}
We say that an interval $]a,b[$ with $m<a<b<M$, is \emph{maximal for condition \eqref{Eq local positive curve}} if it satisfies \eqref{Eq local positive curve} and is not a proper subset of any other such interval. 
\\If $a=m$ or $b=M$ (or both), we say that $]a,b[$ is maximal for condition \eqref{Eq local positive curve} if there is a monotonically increasing sequence $] a_n , b_n[ $ with $m < a_n < b_n <M$, such that every $]a_n,b_n[$ satisfies \eqref{Eq local positive curve}, $]a,b[= \bigcup \limits_{n \in \mathbb N} ]a_n,b_n[$, and $]a,b[$ is not a proper subset of any other such interval.
\end{defi}

In the following, $\mathcal L^+$ denotes the set of all Lebesgue points of the function $x \mapsto \frac{\Pi(x)}{\sigma(x)^2}$ such that $\Pi(x)>0$.
$\mathcal L^-$ denotes the set of all Lebesgue points of the function $x \mapsto \frac{\Pi(x)}{\sigma(x)^2}$ such that $\Pi(x)<0$.

Along the paper we will suppose that Assumptions \ref{Assumption Engelbert}, \ref{Assumption r} and \ref{Assumption Pi} hold. We will not mention them again, except in Proposition \ref{P assumption Pi vs Pia}, dealing with equivalence between the Assumptions \ref{Assumption Pi} and \ref{Assumption Pia}, and in Subsection \ref{SS T value function}, where intermediate results are proved under a stronger version of these assumptions.

\section{Main results}\label{S results}
In this section we state our main results without proofs.
Full proofs are postponed to Section \ref{S proofs}.

Our characterization of the value function (Theorem \ref{T value function}) relies on maximal intervals for \eqref{Eq local positive curve} and the corresponding functions $v^{[a,b]}$.
Before stating the main result of the section, we give the following properties of maximal intervals.

\begin{prop}\label{P maximal interval}
The following statements hold true:
\begin{itemize}
\item[a)]Different maximal intervals for \eqref{Eq local positive curve} have empty intersection.
\item[b)]Every $x \in \mathcal L^+$ lies in some maximal interval for condition \eqref{Eq local positive curve}.
Conversely, if $]a,b[$ is maximal for \eqref{Eq local positive curve}, then $]a,b[ \cap \mathcal L^+ \neq \emptyset$.
\item[c)]If $]a,b[$ is maximal for \eqref{Eq local positive curve}, then $v^{[a,b]}$ is well defined and $v^{[a,b]}(x) \geq 0$ for every $x \in I$.
Conversely, if $v$ is a solution of \eqref{Eq ODE} such that $v(x) \geq 0$ for every $x \in I$, and $a<b$ are two consecutive zeroes of $v$, then $]a,b[$ is maximal for \eqref{Eq local positive curve}.
\end{itemize}
\end{prop}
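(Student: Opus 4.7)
I would prove the three parts in the order (b), (c), (a), since (a) follows cleanly from the global sign information established in (c). Throughout, the key tools are the integral representation \eqref{Eq vab}, the positivity $\phi_{12}(a,b)>0$ from Proposition \ref{P sign phi_12}, and continuous dependence of $v^{[a,b]}$ on $(a,b)$ via the Carathéodory formulation of \eqref{Eq system ODE}. A probabilistic reading $v^{[a,b]}(x)=\mathbb E_x\bigl[\int_0^{\tau^{ab}} e^{-\rho_s}\Pi(X_s)\,ds\bigr]$, with $\tau^{ab}$ the exit time of $X$ from $]a,b[$, is also useful for sign reasoning.

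For (b), first direction: at $x_0\in\mathcal L^+$, the Lebesgue-point property, the near-diagonal expansion $\phi_{12}(y,z)=(z-y)+o(z-y)$ (from $\phi_{12}(y,y)=0$ and $\partial_z\phi_{12}(y,y)=1$), and $\Pi(x_0)>0$ yield, via direct substitution in \eqref{Eq vab}, that $v^{[x_0-\epsilon,x_0+\epsilon]}>0$ on $]x_0-\epsilon,x_0+\epsilon[$ for all sufficiently small $\epsilon>0$. A Zorn-type argument then extends this to a maximal interval containing $x_0$. For the converse direction: if $]a,b[$ is maximal and $\mathcal L^+\cap ]a,b[=\emptyset$, then by density of Lebesgue points the set $\{\Pi>0\}\cap ]a,b[$ has Lebesgue measure zero, so $\Pi\leq 0$ a.e.\ on $]a,b[$; the probabilistic representation then forces $v^{[a,b]}\leq 0$ on $]a,b[$, contradicting \eqref{Eq local positive curve}.

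For (c), the crux is a smooth-fit condition: if $]a,b[$ is maximal with $m<a<b<M$, then $(v^{[a,b]})'(a)=(v^{[a,b]})'(b)=0$. If, for instance, $(v^{[a,b]})'(b)<0$, continuous dependence of $v^{[a',b']}$ on the endpoints permits a slight enlargement of $]a,b[$ that still satisfies \eqref{Eq local positive curve}, contradicting maximality. With smooth fit in place, $v^{[a,b]}$ extended past $b$ solves \eqref{Eq ODE} with vanishing value and derivative at $b$, and a sign/comparison analysis (or the probabilistic representation with extended exit times) forces $v^{[a,b]}\geq 0$ on $]b,M[$; symmetrically on $]m,a[$. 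The limit cases $a=m$, $b=M$ reduce to the finite case via the sequential definition. The converse of (c) is essentially uniqueness: a nonnegative solution of \eqref{Eq ODE} with consecutive zeros $a<b$ agrees with $v^{[a,b]}$ on $]a,b[$ by uniqueness of the boundary-value problem, and any strictly larger interval satisfying \eqref{Eq local positive curve} would produce a competing nonnegative solution incompatible with the 2-dimensional structure of the space of homogeneous solutions.

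For (a): assuming (c), suppose two distinct maximal intervals $]a_1,b_1[$ and $]a_2,b_2[$ overlap with $a_1\leq a_2<b_1\leq b_2$. Then $u:=v^{[a_1,b_1]}-v^{[a_2,b_2]}$ is a nontrivial solution of the homogeneous equation, vanishing at some point $x^\ast$ (either at a shared endpoint when $a_1=a_2$ or $b_1=b_2$, or at an interior zero in $]a_2,b_1[$ arising from $u(a_2)>0$ and $u(b_1)<0$ in the properly-nested case). Writing $u=c\,\phi_{12}(x^\ast,\cdot)$, determining the sign of $c$ from the overlap, and using the sign of $\phi_{12}(x^\ast,\cdot)$ from Proposition \ref{P sign phi_12} (extended to arguments below $x^\ast$ via the identity $\phi_{12}(x^\ast,z)=-\phi_{12}(z,x^\ast)/\det\Phi(z,x^\ast)$) produces a point of $I$ at which either $v^{[a_1,b_1]}$ or $v^{[a_2,b_2]}$ is strictly negative, contradicting (c). The main obstacle I expect is the smooth-fit claim in (c): under only local integrability of $\alpha/\sigma^2$ and $r/\sigma^2$, standard $C^1$-perturbation arguments for ODEs fail, so one must work with the integral form of the fundamental matrix $\Phi$ and with \eqref{Eq vab} directly to quantify how $v^{[a,b]}$ varies with the endpoints.
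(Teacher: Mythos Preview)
Your strategy for (a) and (b) is workable, though in (a) you take a longer route than the paper (which simply merges two overlapping intervals satisfying \eqref{Eq local positive curve} into a larger one via a fusion lemma, contradicting maximality directly). The real problem is in (c).

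You split the forward direction of (c) into two steps: first establish smooth fit $(v^{[a,b]})'(a)=(v^{[a,b]})'(b)=0$ from maximality, then deduce $v^{[a,b]}\geq 0$ on all of $I$ from smooth fit via ``a sign/comparison analysis (or the probabilistic representation with extended exit times)''. The second step does not follow from the first. Smooth fit gives $v^{[a,b]}=v_{b,0}$, so for $x>b$ one has
\[
v^{[a,b]}(x)=-\int_b^x\frac{2\Pi(z)}{\sigma(z)^2}\,\phi_{12}(z,x)\,dz .
\]
Nothing in your argument prevents this from being negative: if $\Pi$ carries positive mass just to the right of $b$, the integral is positive and $v_{b,0}$ dips below zero immediately. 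There is no natural probabilistic representation for $v_{b,0}$ (it is the solution with \emph{both} Cauchy data vanishing at $b$, not a two-point boundary problem), and you do not name a comparison function that would control its sign. So the ``sign/comparison analysis'' is a genuine gap, not a routine step.

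The paper avoids this detour entirely. Its key lemma (Lemma~\ref{L local positive curve}) shows directly that if $v^{[a,b]}(x_1)<0$ for \emph{any} $x_1\in I$, whether adjacent to $[a,b]$ or far away, then $]a,b[$ can be strictly enlarged while preserving \eqref{Eq local positive curve}. The device is not to perturb the endpoints in $v^{[a',b']}$, but to increase the initial slope in $v_{a,d}$ slightly above $(v^{[a,b]})'(a)$: by Proposition~\ref{P sign phi_12} this lifts the entire curve, pushes the first zero beyond $b$, and simultaneously creates a zero strictly to the left of $a$. Maximality then forces $v^{[a,b]}\geq 0$ on $I$ in one stroke; smooth fit drops out as a consequence rather than serving as an intermediate hypothesis. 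You identified the right neighbourhood of the difficulty (``the main obstacle I expect is the smooth-fit claim''), but the actual obstacle sits one step later.
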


By definition, maximal intervals have positive length.
Since they are pairwise disjoint, this implies that there are at most countably many different maximal intervals for condition \eqref{Eq local positive curve}. Consequently, we have the following characterization of the value function.

\begin{teo}
\label{T value function}
Let $\left\{ ]a_k,b_k[, \ k=1,2, \ldots \right\}$ be the collection of all maximal intervals for condition \eqref{Eq local positive curve}.
\\
The value function \eqref{Eq value function} is
\begin{equation}
\label{Eq solution HJB}
V(x) = \left\{\begin{array}{ll}
v^{[a_k,b_k]}(x) & \text{for } x \in ]a_k,b_k[, \ \ k=1,2, \ldots ,
\\
0 & \text{for } x \in I \setminus \bigcup\limits_k ]a_k,b_k[ .
\end{array} \right.
\end{equation}
\end{teo}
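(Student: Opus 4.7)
The plan is a verification argument. I would define the candidate
\begin{equation}
\notag
U(x) = v^{[a_k,b_k]}(x) \text{ on } ]a_k,b_k[, \qquad U(x)=0 \text{ on } I\setminus \bigcup_k ]a_k,b_k[,
\end{equation}
show that it is a Carath\'eodory solution of the HJB inequality \eqref{Eq HJB} with $U\geq 0$, and then prove $U\equiv V$ through a two-sided It\^o computation.

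First I would verify the HJB. On every maximal interval $]a_k,b_k[$, the restriction $v^{[a_k,b_k]}$ solves \eqref{Eq ODE} exactly and is strictly positive inside, so the minimum in \eqref{Eq HJB} is realised by the differential-operator term and equals zero. On the complement, $U$ and its derivatives vanish pointwise, and Proposition \ref{P maximal interval}(b) guarantees that $\Pi\leq 0$ almost everywhere there (otherwise a Lebesgue point of $\Pi/\sigma^2$ with $\Pi>0$ would lie in some maximal interval), so \eqref{Eq HJB} reduces to $\min\{-\Pi,0\}=0$ a.e. The essential regularity check is the $C^1$-pasting at each endpoint $a_k,b_k$: by Proposition \ref{P maximal interval}(c), $v^{[a_k,b_k]}\geq 0$ throughout $I$ and vanishes at $a_k,b_k$, so these endpoints are interior local minima of $v^{[a_k,b_k]}$, forcing $(v^{[a_k,b_k]})'(a_k)=(v^{[a_k,b_k]})'(b_k)=0$. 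This matches the vanishing derivative of $U$ just outside, so $U$ is $C^1$ with absolutely continuous derivative and satisfies \eqref{Eq HJB} almost everywhere on $I$.

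With this in hand I would apply It\^o's formula (in the form valid for functions whose derivative is merely absolutely continuous) to $e^{-\rho_t}U(X_t)$, localised by stopping times $\sigma_n$ exiting a compact subinterval $[m_n,M_n]\uparrow I$. For arbitrary $\tau\in\mathcal T$, the HJB inequality together with $U\geq 0$ gives
\begin{equation}
\notag
U(x) \geq \mathbb E_x \Bigl[ \int_0^{\tau\wedge n\wedge\sigma_n} e^{-\rho_s}\Pi(X_s)\,ds \Bigr],
\end{equation}
and letting $n\to\infty$ (dominated convergence on the positive part, via Assumption \ref{Assumption Pi}; monotone convergence on the negative part) yields $U(x)\geq J(x,\tau)$, hence $U\geq V$. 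For the reverse inequality I would take $\hat\tau=\inf\{t\geq 0: U(X_t)=0\}$: for $t<\hat\tau$, path continuity confines $X_t$ to a single maximal interval, on which \eqref{Eq ODE} holds with equality, so the same computation with $\tau=\hat\tau$ becomes an equality; the boundary term vanishes in the limit by continuity of $U\circ X$ and the zero boundary values of $v^{[a_k,b_k]}$, yielding $J(x,\hat\tau)=U(x)$ and hence $V\geq U$.

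The main obstacle I anticipate is the removal of the localisation $\sigma_n$ under the weak Engelbert--Schmidt hypotheses: the coefficients $\alpha/\sigma^2$, $r/\sigma^2$, $\Pi/\sigma^2$ are only locally integrable, $X$ can explode, and $\tau$ may be unbounded, so one must justify carefully the dominated/monotone convergence of the $\Pi$-integral and the vanishing of the boundary term in the limit. The latter is especially delicate at maximal intervals with $a_k=m$ or $b_k=M$, where the zero boundary condition of $v^{[a_k,b_k]}$ is only a limit and is not attained inside $I$; here the explicit representation \eqref{Eq vab} together with the limit definition of $v^{[a,b]}$ at infinite endpoints should carry the argument through.
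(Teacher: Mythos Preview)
Your plan is essentially the paper's verification strategy, and your check that the candidate $U$ solves \eqref{Eq HJB} mirrors the paper's Theorem~\ref{T existence HJB} almost verbatim. The genuine gap is in the reverse inequality $V\geq U$.

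You argue that the boundary term $e^{-\rho_t}U(X_t)$ vanishes in the limit ``by continuity of $U\circ X$ and the zero boundary values of $v^{[a_k,b_k]}$''. But when $a_k=m$ or $b_k=M$ the paper explicitly warns (just after the limit definition of $v^{[a,b]}$ in Section~\ref{S background}) that $\lim_{x\to a_k}v^{[a_k,b_k]}(x)=0$ need \emph{not} hold; Example~1 in Section~\ref{S example} even exhibits an unbounded value function. So the boundary term need not vanish, and your closing remark that ``the explicit representation \eqref{Eq vab} together with the limit definition \ldots\ should carry the argument through'' is not a resolution---it is precisely the point where the direct verification fails under merely local integrability.

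The paper sidesteps this entirely. Rather than verify on $I$, it first imposes the stronger Assumption~\ref{Assumption integrable} (global integrability of $1/\sigma^2$, $\alpha/\sigma^2$, $r/\sigma^2$, $\Pi/\sigma^2$), which holds automatically on any compact subinterval of $I$ and forces every $v^{[a,b]}$ to have genuine zero boundary values. Under that assumption your verification argument (Theorem~\ref{T verification}) goes through exactly as you sketch it. The general case is then recovered by computing the truncated value functions
\[
V^{[a_n,b_n]}(x)=\sup_{\tau\leq\tau_{[a_n,b_n]}}J(x,\tau)
\]
on an exhausting sequence of compacts $[a_n,b_n]\uparrow I$, identifying each with the corresponding HJB solution via Theorems~\ref{T verification}--\ref{T existence HJB}, and passing to the limit using monotone convergence together with Definition~\ref{D maximal interval}. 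This reduction buys exactly what your argument is missing: the boundary term at an endpoint equal to $m$ or $M$ never has to be controlled directly.
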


Theorem \ref{T value function} begs for some practical way to identify the maximal intervals for \eqref{Eq local positive curve}.
Proposition \ref{P maximal interval} gives some important information.
We complete it with the following:

\begin{prop}
\label{P V intervals}
For any $a \in I$, $b \in ]a,M]$, $]a,b[$ is maximal for \eqref{Eq local positive curve} if and only if: 
\begin{itemize}
\item[a)]
$v_{a,0}(x) \geq 0 $ for every $x \in I$, and
\item[b)]
there is a sequence $a_n \in ]a, M[ \cap \mathcal L^-$ such that
$\left\{ x> a_n : v_{a_n,0}(x) \leq 0 \right\} \neq \emptyset$ for every $n$ and
\begin{align*}
\lim_{n \rightarrow \infty} a_n = a,
\qquad \text{and} \qquad
\lim_{n \rightarrow \infty} \inf \left\{ x> a_n : v_{a_n,0}(x) \leq 0 \right\} = b .
\end{align*}
\end{itemize}
In that case, $v^{[a,b]} = v_{a,0}$.

For any $b \in I$, $a \in [m,b[$, $]a,b[$ is maximal for \eqref{Eq local positive curve} if and only if: 
\begin{itemize}
\item[c)]
$v_{b,0}(x) \geq 0 $ for every $x \in I$, and
\item[d)]
there is a sequence $b_n \in]m,b[ \cap \mathcal L^-$ such that
$\left\{ x< b_n : v_{b_n,0}(x) \leq 0 \right\} \neq \emptyset$ for every $n$ and
\begin{align*}
\lim_{n \rightarrow \infty} b_n = b,
\qquad \text{and} \qquad
\lim_{n \rightarrow \infty} \sup \left\{ x < b_n : v_{b_n,0}(x) \leq 0 \right\} = a .
\end{align*}
\end{itemize}
In that case, $v^{[a,b]} = v_{b,0}$.
\end{prop}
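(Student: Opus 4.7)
My plan is to exploit the symmetry between the two halves of the statement (they differ only by which endpoint is being approximated) and outline only the first equivalence, for $a\in I$, $b\in\,]a,M]$; the second follows by swapping the roles of the endpoints.

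\textbf{Forward direction.} Suppose $]a,b[$ is maximal for \eqref{Eq local positive curve}. Proposition~\ref{P maximal interval}(c) gives $v^{[a,b]}(x)\ge 0$ on all of $I$ and $v^{[a,b]}(a)=0$, so $a$ is a global minimum of the $C^1$ function $v^{[a,b]}$; this forces $(v^{[a,b]})'(a)=0$, and uniqueness of solutions of \eqref{Eq ODE} with initial data $v(a)=v'(a)=0$ identifies $v^{[a,b]}$ with $v_{a,0}$. This yields $v^{[a,b]}=v_{a,0}$ and condition (a). For condition (b) the key identity is
\[
v_{\tilde a,0}(x) - v_{a,0}(x) = \int_a^{\tilde a}\frac{2\Pi(z)}{\sigma(z)^2}\phi_{12}(z,x)\,dz\qquad(x>\tilde a>a),
\]
which is immediate from \eqref{Eq v_ad}. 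I first argue that $\mathcal L^-$ accumulates at $a$ from the right: otherwise $\Pi\ge 0$ a.e.\ on some $]a,a+\varepsilon[$, and then $v_{a,0}\le 0$ on that interval, which combined with (a) forces $v_{a,0}\equiv 0$ there, contradicting $v^{[a,b]}=v_{a,0}>0$ on $]a,b[$. For $\tilde a\in\mathcal L^-$ close to $a$, the ODE at the Lebesgue point $\tilde a$ gives $v_{\tilde a,0}''(\tilde a)=-2\Pi(\tilde a)/\sigma(\tilde a)^2>0$, so $v_{\tilde a,0}$ is strictly positive just to the right of $\tilde a$; continuous dependence of solutions of \eqref{Eq ODE} on initial data then yields $v_{\tilde a,0}\to v_{a,0}$ uniformly on compact subsets of $[a,M[$. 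Using this together with $v_{a,0}(b)=v_{a,0}'(b)=0$, I select $a_n\in\mathcal L^-$ so that $c_n:=\inf\{x>a_n:v_{a_n,0}(x)\le 0\}$ is finite and tends to $b$.

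\textbf{Converse.} Assume (a) and (b). Continuity of $v_{a_n,0}$ together with the definition of $c_n$ gives $v_{a_n,0}(c_n)=0$, and passing to the limit via continuous dependence on initial data yields $v_{a,0}(b)\le 0$; combined with (a) this gives $v_{a,0}(b)=0$. To rule out zeros of $v_{a,0}$ inside $]a,b[$, I argue by contradiction: if $v_{a,0}(b')=0$ for some $b'\in\,]a,b[$, then $b'$ is a double zero of the non-negative $C^1$ function $v_{a,0}$, and by Proposition~\ref{P maximal interval}(c) the consecutive zeros $a,b'$ of $v_{a,0}$ would make $]a,b'[$ maximal. Applying the forward direction already proved to $]a,b'[$ produces a sequence with first-zero limits at $b'<b$; comparing with the hypothesis sequence, and using the displayed identity above together with the sign control coming from $\tilde a\in\mathcal L^-$, I derive a contradiction. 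Having identified $a$ and $b$ as consecutive zeros of the non-negative $v_{a,0}$, Proposition~\ref{P maximal interval}(c) yields maximality of $]a,b[$.

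\textbf{Main obstacle.} The delicate step is tracking the first zero $c_{\tilde a}$ of $v_{\tilde a,0}$ as $\tilde a\downarrow a$: because $v_{a,0}$ has a double zero at $b$ rather than a transversal one, uniform convergence of the perturbed solutions does not by itself locate their zeros near $b$. The argument relies on choosing $\tilde a\in\mathcal L^-$ so that the Taylor expansion at $\tilde a$ is strictly convex, and on controlling the sign of the perturbation $\int_a^{\tilde a}\frac{2\Pi(z)}{\sigma(z)^2}\phi_{12}(z,\cdot)\,dz$ along a well-chosen subsequence; both are made possible by the forced accumulation of $\mathcal L^-$ at $a$ established in the forward direction.
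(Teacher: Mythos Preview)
Your overall strategy differs from the paper's, and the ``main obstacle'' you identify is a genuine gap that your sketch does not close.

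\textbf{Forward direction, condition (b).} You try to track the first zero $c_{\tilde a}$ of $v_{\tilde a,0}$ by continuous dependence and by controlling the sign of the perturbation $\int_a^{\tilde a}\frac{2\Pi}{\sigma^2}\phi_{12}(z,\cdot)\,dz$. But accumulation of $\mathcal L^-$ at $a$ says nothing about the sign of $\Pi$ on the whole segment $]a,\tilde a[$, so it does not give you the sign of that integral at $x=b$. In fact, writing the difference $v_{\tilde a,0}-v_{a,0}$ as the homogeneous solution with data $(-v_{a,0}(\tilde a),-v_{a,0}'(\tilde a))$ at $\tilde a$ gives
\[
v_{\tilde a,0}(b) \;=\; -\phi_{11}(\tilde a,b)\,v_{a,0}(\tilde a)\;-\;\phi_{12}(\tilde a,b)\,v_{a,0}'(\tilde a),
\]
and both $v_{a,0}(\tilde a)$ and $v_{a,0}'(\tilde a)$ are small of undetermined relative sign as $\tilde a\downarrow a$; there is no mechanism in your outline that forces $v_{\tilde a,0}(b)\le 0$ along a subsequence. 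Since $b$ is a \emph{double} zero of $v_{a,0}$, the perturbed zero may disappear entirely (and if $v_{a,0}$ happens to be positive again on some $]b,c[$, the first zero of $v_{\tilde a,0}$ could jump to near $c$, not $b$). The paper avoids this difficulty altogether: for each compact $[a_1,b_1]\subset\,]a,b[$ it restricts the state space to $]m,b_1[$, takes the interval $]a_2,b_1[$ that is maximal there, and applies Proposition~\ref{P maximal interval} in the restricted problem to get $v^{[a_2,b_1]}=v_{a_2,0}$ with $v_{a_2,0}(b_1)=0$. This fixes the zero \emph{by construction} at the prescribed $b_1$, and the accumulation of $\mathcal L^-$ is used only locally at $a_2$. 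Letting $b_1\uparrow b$ produces the required sequence without any delicate zero-tracking.

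\textbf{Converse.} Your contradiction argument is workable in spirit but heavier than needed and has loose ends (for instance, you invoke ``consecutive zeros $a,b'$'' without first excluding that $v_{a,0}$ vanishes on a subinterval of $]a,b'[$, and the case $b=M$ is not addressed). The paper's route is shorter: since $a_n\in\mathcal L^-$, each $]a_n,b_n[$ (with $b_n=\inf\{x>a_n:v_{a_n,0}(x)\le 0\}$) satisfies \eqref{Eq local positive curve}; Lemma~\ref{L fusion of intervals} then gives \eqref{Eq local positive curve} for the union $]a,b[$, and non-negativity of $v_{a,0}$ plus Lemma~\ref{L no double cross} forces maximality. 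I recommend adopting this argument instead of the contradiction.
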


Fix a interval $]a,b[$ with $m<a<b<M$, maximal for \eqref{Eq local positive curve}.
Due to the Propositions above, we have $v^{[a,b]} = v_{a,0} = v_{b,0}$.
By \eqref{Eq solution system ODE},
$ v^\prime_{a,0} (x) = - \int_a^x \frac{2\Pi (z)}{\sigma (z)^2} \phi_{22} (z,x) dz $.
Hence, the points $a,b$ solve the following set of nonlinear equations
\begin{equation}
\label{Eq a b}
\int_a^b \frac{\Pi (z)}{\sigma (z)^2} \phi_{12} (z,b) dz = 0 ,
\qquad
\int_a^b \frac{\Pi (z)}{\sigma (z)^2} \phi_{22} (z,b) dz = 0 ,
\qquad
a < b.
\end{equation}
If $]a,M[$ is maximal for \eqref{Eq local positive curve} and $a \in I$, then for any sequence $\{b_n \in I\}_{n \in \mathbb N}$ converging to $M$, $a$ solves the equation:
\begin{equation}
\label{Eq a unilateral}
\lim_{n\to \infty}\int_{a}^{b_n}\frac{\Pi (z)}{\sigma (z)^2} \phi_{12} (z,b_n)dz=0.
\end{equation}
Similarly, if $]m,b[$ is maximal for \eqref{Eq local positive curve} and $b\in I$, then for any sequence $\{a_n \in I \}_{n\in\mathbb{N}}$ converging to $m$, $b$ solves the equation:
\begin{equation}
\label{Eq b unilateral}
\lim_{n\to \infty}\int_{a_n}^{b}\frac{\Pi (z)}{\sigma (z)^2} \phi_{12} (z,a_n)=0.
\end{equation}
In Section \ref{S example} we will see that equations \eqref{Eq a b}, \eqref{Eq a unilateral}, \eqref{Eq b unilateral} simplify considerably when $X$ is a geometric Brownian motion.

Theoretically, the value function can be found through the following steps:
\begin{itemize}
\item[(I)]
Find the solutions of \eqref{Eq a b}.
Discard any solutions $(a,b)$ such that $\int_a^x \frac{\Pi (z)}{\sigma (z)^2} \phi_{12} (z,x) dz > 0 $ for some $x \in I$.
\\
This yields at most countably many solutions $(a_k,b_k)$, $k=1,2, \ldots $, and the collection of all the intervals between consecutive zeroes of some $v_{a_k,0}$ is the collection of all maximal intervals for \eqref{Eq local positive curve}, with $a>m$ and $b<M$.
\item[(II)]
If there is some $a \in I$ such that $v_{a,0}(x) \geq 0$ for every $x \in I$, then find 
\begin{align*}
&
\hat a = \inf \left\{ a \in I: v_{a,0}(x) \geq 0 \ \text{for every } x \in I \right\} ,
\qquad
\hat b = \sup \left\{ b \in I: v_{b,0}(x) \geq 0 \ \text{for every } x \in I \right\} .
\end{align*}
If $\hat a >m$, then $]m,\hat a[$ is maximal for \eqref{Eq local positive curve}. 
If $\hat b < M$, then $]\hat b, M[$ is maximal for \eqref{Eq local positive curve}. 
\\
This yields all maximal intervals of type $]m,a[$ or $]b,M[$, if such intervals exist.
\item[(III)]
If for every $a \in I $ there is some $x \in I$ such that $v_{a,0}(x) <0$, then $I$ is maximal for \eqref{Eq local positive curve}.
\end{itemize}

\section{Examples}\label{S example}
R{\"u}schendorf and Urusov \cite{ruschendorf2008class}, and Belomestny, R{\"u}schendorf and Urusov \cite{belomestny2010optimal} characterize the value function \eqref{Eq value function} as the solution of a free boundary problem, assuming that the function $\Pi$ is of ``two sided form'' and the support of the value function is an interval $[a,b]$, with $m<a<b<M$.
The results in the previous section do not require any particular structure neither for $\Pi$ nor for the value function.

In Example 1 we discuss a case where $\Pi$ is of ``two sided form'' but the value function may fail to satisfy the assumption in \cite{ruschendorf2008class,belomestny2010optimal}, depending on parameters.
Example 2 deals with a simple case where $\Pi$ is not of ``two sided form''.
In both Examples, we assume that the process $X$ is a geometric Brownian motion and the discount rate is constant. 
This means that $\alpha(x)=\alpha x$, $\sigma(x)=\sigma x$, $r(x)=r$, with $\alpha,\sigma, r$ constants and $I=]0,+\infty[$. 
Moreover, $P_x\{\tau_I=+\infty\}=1$, for every $x \in ]0,+\infty[$, where $P_x$ denotes the conditional probability in $X_0=x$.
The matrix $A(x)$ is
\begin{equation}
\notag
 A(x)=\begin{pmatrix}
   0& 1 \\
  \frac{2r}{\sigma^2x^2}& - \frac{2\alpha}{\sigma^2x}
 \end{pmatrix} .
\end{equation}
Before presenting the examples, we will discuss the fundamental solution $\Phi$ associated with this matrix.

The ordinary differential equation \eqref{Eq ODE} takes the form
\begin{equation}\label{Eq ODE GBM}
rv(x)-\alpha xv'(x)-\frac{\sigma^2}{2}x^2v''(x)-\Pi(x)=0.
\end{equation}
Using the change of variable $x=e^z$ and $y(z)=v(e^z)$, this reduces to the equation with constant coefficients:
\begin{equation}\label{Eq reduced equation}
ry(z)-\left(\alpha-\frac{\sigma^2}{2}\right)y'(z)-\frac{\sigma^2}{2}y''(z)-\Pi(e^z)=0.
\end{equation}
The fundamental matrix $\Phi$ is characterized by the roots of the characteristic polynomial of \eqref{Eq reduced equation}
\begin{equation}
\notag
P(d)=-\frac{\sigma^2}{2}d^2-\left(\alpha-\frac{\sigma^2}{2}\right)d+r.
\end{equation}
Let $d_1$ and $d_2$ be the roots of $P$. The model's data, $(r,\alpha,\sigma)$ may be parametrized by $(d_1,d_2,\sigma)$ through the relations
$$\alpha=\frac{\sigma^2}{2}(1-d_1-d_2) \quad\text{and}\quad r=-\frac{\sigma^2}{2}d_1d_2.
$$
Three different cases must be considered: (i) $d_1=\overline{d_2}\in\mathbb{C}\setminus\mathbb{R}$, (ii) $d_1=d_2\in\mathbb{R}$ and (iii) $d_1,d_2\in\mathbb{R}$ with $d_1 \neq d_2$.

{\bf Case (i):} Let $d_1=a+ib$, $d_2=a-ib$.
The fundamental matrix associated to the equation \eqref{Eq ODE GBM} is 
\begin{equation}
\notag
\Phi (x,y)=\left(\frac{y}{x}\right)^a\left( \begin{array}{cc}
\frac{b\cos\left(b \log\left(\frac{y}{x}\right)\right)-a\sin\left(b\log\left(\frac{y}{x}\right)\right)}{b}&\frac{x\sin\left(b\log\left(\frac{y}{x}\right)\right)}{b}\\
-\frac{\left(a^2+b^2\right)\sin\left(b\log\left(\frac{y}{x}\right)\right)}{by} &\frac{x}{y}\frac{b\cos\left(b \log\left(\frac{y}{x}\right)\right)+a\sin\left(b\log\left(\frac{y}{x}\right)\right)}{b}
\end{array} \right).
\end{equation}
Thus, the function $y\to\phi_{1,2}(x,y)$ has infinitely many zeroes. 
Therefore, in light of Proposition \ref{P sign phi_12}
\begin{equation}
\label{Eq Z01}
\mathbb{E}_x\left[\int_0^{+\infty} e^{-rt}\Pi^+(X_t)dt\right]=+\infty
\end{equation}
for every $x \in ]0,+\infty[$ and every measurable $\Pi$ such that the set $\left\{ x>0: \Pi(x)>0 \right\}$ has strictly positive Lebesgue measure. 
Thus, Assumption \ref{Assumption Pi}/\ref{Assumption Pia} fails and the problem is either trivial or ill-posed, as explained in Section \ref{S problem setting}.

{\bf Case (ii):} Let $d=d_1=d_2$.
In this case, the fundamental matrix is
\begin{equation}
\notag
  \Phi (x,y)=\left(\frac{y}{x}\right)^d\left( \begin{array}{cc}
\left(1-d\log\left(\frac{y}{x}\right)\right)& x\log\left(\frac{y}{x}\right)\\
-\frac{d^2}{y} \log\left(\frac{y}{x}\right) & \frac{x}{y}\left(1+d\log\left(\frac{y}{x}\right)\right)
\end{array} \right).
\end{equation}
For every $x \in ]0,+\infty[$, the function $y \mapsto \phi_{12}(x,y)$ has one unique zero.
However, a tedious but trivial computation shows that
$$
\lim_{n\to +\infty}v^{[\frac{1}{n},n]}(x)=+\infty\quad \text{for every } x>0
$$
whenever $\Pi$ is non-negative and the set $\{x>0:\Pi(x)>0\}$ has strictly positive Lebesgue measure.
Thus, \eqref{Eq Z01} holds also in this case and therefore the problem is again either trivial or ill-posed.

{\bf Case (iii):} Without loss of generality, we assume that $d_1<d_2$. 
The fundamental matrix is 
\begin{equation}\label{Eq phi d1<d2}
\Phi (x,y)=\left( \begin{array}{cc}
\frac{d_2\left(\frac{y}{x}\right)^{d_1}-d_1\left(\frac{y}{x}\right)^{d_2}}{d_2-d_1}
 & x\frac{\left(\frac{y}{x}\right)^{d_2}-\left(\frac{y}{x}\right)^{d_1}}{d_2-d_1}\\
d_1d_2\frac{\left(\frac{y}{x}\right)^{d_1-1}-\left(\frac{y}{x}\right)^{d_2-1}}{(d_2-d_1)x} & \frac{d_2\left(\frac{y}{x}\right)^{d_2-1}-d_1\left(\frac{y}{x}\right)^{d_1-1}}{d_2-d_1}
\end{array} \right).
\end{equation}
Like in case (ii), for every $x >0$ the function $y \mapsto \phi_{12}(x,y)$ has one unique zero.
Thus, the discussion above leaves this as the only interesting case.
For this reason, in both examples below we will assume that $d_1,d_2 \in \mathbb R$, with $d_1 < d_2$.

Notice that in case (iii), substitution of \eqref{Eq phi d1<d2} in \eqref{Eq v_ad}, yields
\begin{equation}
\label{Eq v_a0 GBM}
v_{a,0}(x) =
\frac{-2}{\sigma^2(d_2-d_1)}\int_a^x\frac{\left(\frac{x}{z}\right)^{d_2}-\left(\frac{x}{z}\right)^{d_1}}{z}\Pi(z)dz .
\end{equation}
The equations \eqref{Eq a b} reduce to
\begin{equation}
\label{Eq a b gbm}
\int_a^b z^{-d_2-1}\Pi (z) dz = 0 ,
\qquad
\int_a^b z^{-d_1-1}\Pi (z) dz = 0 ,
\qquad
a < b,
\end{equation}
and the equations \eqref{Eq a unilateral}, \eqref{Eq b unilateral} become 
\begin{equation}
\label{Eq a b gbm unilateral}
\int_a^{+\infty} z^{-d_2-1}\Pi (z) dz = 0 ,
\qquad
\int_0^b z^{-d_1-1}\Pi (z) dz = 0 ,
\end{equation}
respectively.
Notice that \eqref{Eq v_a0 GBM}--\eqref{Eq a b gbm}--\eqref{Eq a b gbm unilateral} show that the inverse volatility $\frac{1}{\sigma^2}$ acts as multiplicative parameter in the value function.

\underline{Example 1:} 
\label{Example 1}
Fix $0< x_1 < x_2 < +\infty$, and let $\Pi$ be the piecewise constant function
\begin{equation}
\notag
\Pi(x)=2\chi_{[x_1,x_2]}(x)-1, \quad \text{for all } x>0.
\end{equation}
This function is of ``two sided form'' in the sense of Belomestny, R{\"u}schendorf and Urusov \cite{belomestny2010optimal}. 

Due to Proposition \ref{P maximal interval}, $]0,+\infty[$ contains one unique maximal interval for \eqref{Eq local positive curve}, and it contains the interval $]x_1,x_2[$.
Due to \eqref{Eq v_a0 GBM}, for any $a \in ]0,x_1[$, 
\begin{equation}
\notag 
v_{a,0}(x)=
\frac{2}{\sigma^2(d_2-d_1)}\left(x^{d_1}G_1(x)-x^{d_2}G_2(x)\right)
\end{equation}
with
$$
G_i(x)=\begin{cases}
\frac{-1}{d_i}\left(a^{-d_i}-x^{-d_i}\right)& \text{for } x<x_1\\
\frac{-1}{d_i}\left(a^{-d_i}-2x_1^{-d_i}+x^{-d_i}\right)& \text{for } x_1\leq x\leq x_2\\
\frac{-1}{d_i}\left(a^{-d_i}-2x_1^{-d_i}+2x_2^{-d_i}-x^{-d_i}\right)& \text{for } x> x_2,
\hspace{20mm} i=1,2.
\end{cases}
$$
From this, it can be checked that if $d_2>0$, then for every sufficiently small $a>0$ we have $v_{a,0}(x) >0$ for every $x \neq a$. Therefore, the interval $]0,x_1]$ is not contained in the maximal interval for \eqref{Eq local positive curve}.
A similar argument applied to the function $v_{b,0}$ with $b>x_2$ shows that if $d_1<0$ then the interval $[x_2,+\infty[$ is not contained in the maximal interval for \eqref{Eq local positive curve}.
Therefore, for any $d_1<d_2$, $]0,+\infty[$ cannot be the maximal interval.
If $d_1 < 0 < d_2$ then the maximal interval $]a,b[$ must be such that $0<a<x_1<x_2<b < +\infty$.

To see that in the case $d_1<d_2 <0$ the maximal interval can be either $]a,b[$ with $0 < a <x_1 <x_2 < b <+\infty$ or $]0,b[$ with $x_2 < b < +\infty$, we consider the case $d_1=-2$, $d_2= -1$, where explicit computations are trivial.
Notice that for $\Pi$ of ``two sided form''  and for $0<a<b<+\infty$, $]a,b[$ is maximal if and only if $(a,b)$ solves \eqref{Eq a b gbm}.
For $d_1=-2$, $d_2=-1$, it is easy to check that \eqref{Eq a b gbm} admits a solution with $0<a<b<+\infty$ if and only if $x_1 > \frac{x_2}3$, and in that case
\[
a = \frac{3 x_1 - x_2} 2 ,
\qquad
b = \frac{3x_2-x_1}2 .
\]
If $x_1 < \frac {x_2}3$, the maximal interval is $]0,b[$, with $b$ satisfying the second equality in \eqref{Eq a b gbm unilateral}, that is 
\[
b= \sqrt{2 \left( x_2^2- x_1^2 \right)} .
\]
Therefore, the value function is
\begin{align*}
V(x) = &
\left\{ 
\begin{array}{ll}
v_{a ,0}(x) = v_{b,0}(x), & \text{for } x \in \left[ a , b \right]
\\
0, &  \text{for } x \notin \left[ a , b \right]
\end{array}
\right. 
& 
\text{if } x_1 > \frac{x_2}{3} ,
\\
V(x) = &
\left\{ 
\begin{array}{ll}
v_{ b ,0}(x) , & \text{for } x \leq b
\\
0, &  \text{for } x > b
\end{array}
\right. 
&
\text{if } x_1 < \frac{x_2}{3} ,
\end{align*}
with $a,b$ given by the expressions above.
In the second case, the value function is not supported in a compact subinterval of $]0,+\infty[$. 
Thus, this is an example of a problem that is not solved by the results in \cite{ruschendorf2008class,belomestny2010optimal}. 
Graphs of the value function for both cases are shown in Figure \ref{Figure 1}.
Notice that the case $d_1<d_2<0$ corresponds to a negative discount rate and the value function is unbounded.

\begin{figure}[h!]
\includegraphics[width=75mm]{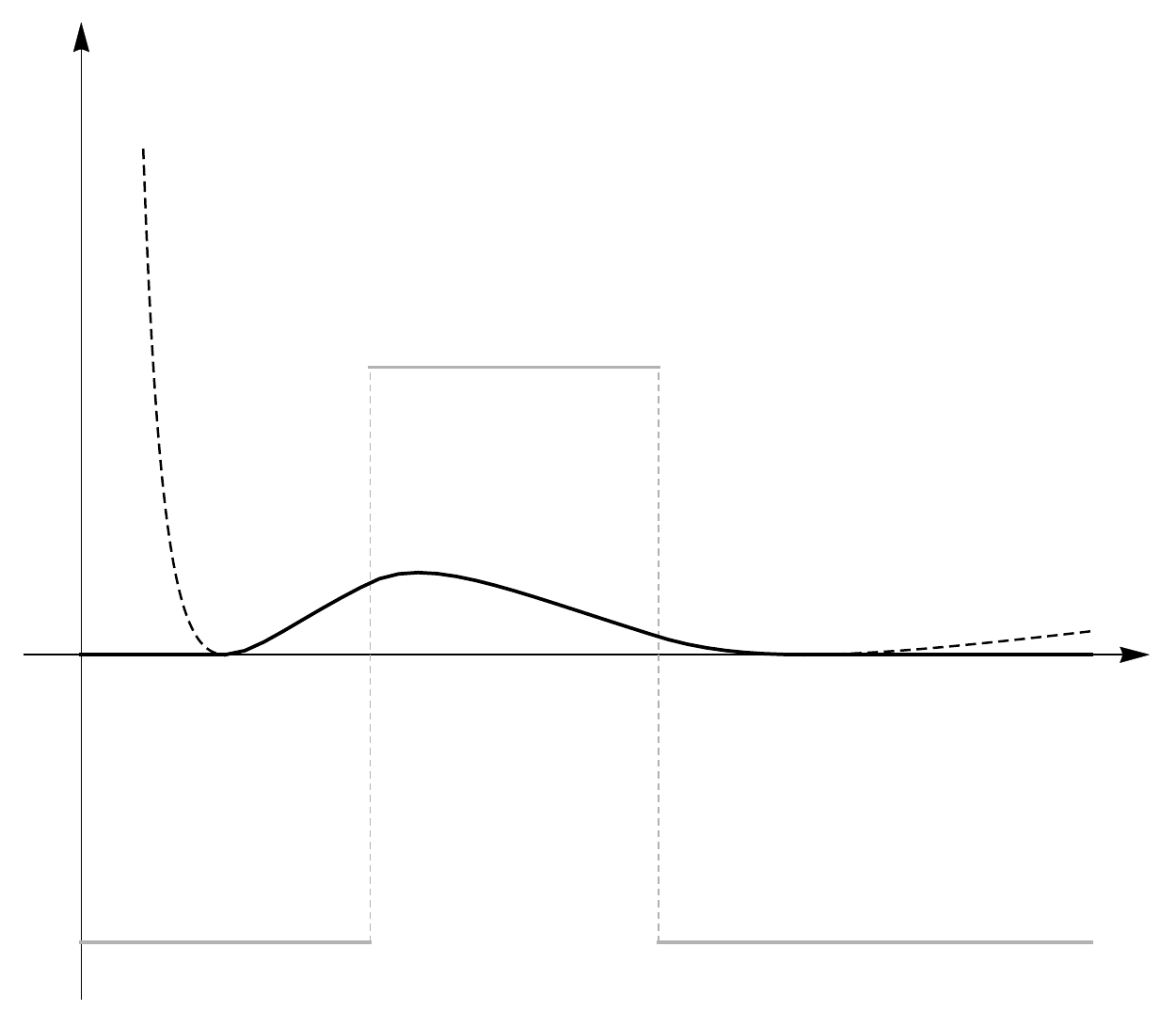} \hspace{5mm}
\includegraphics[width=75mm]{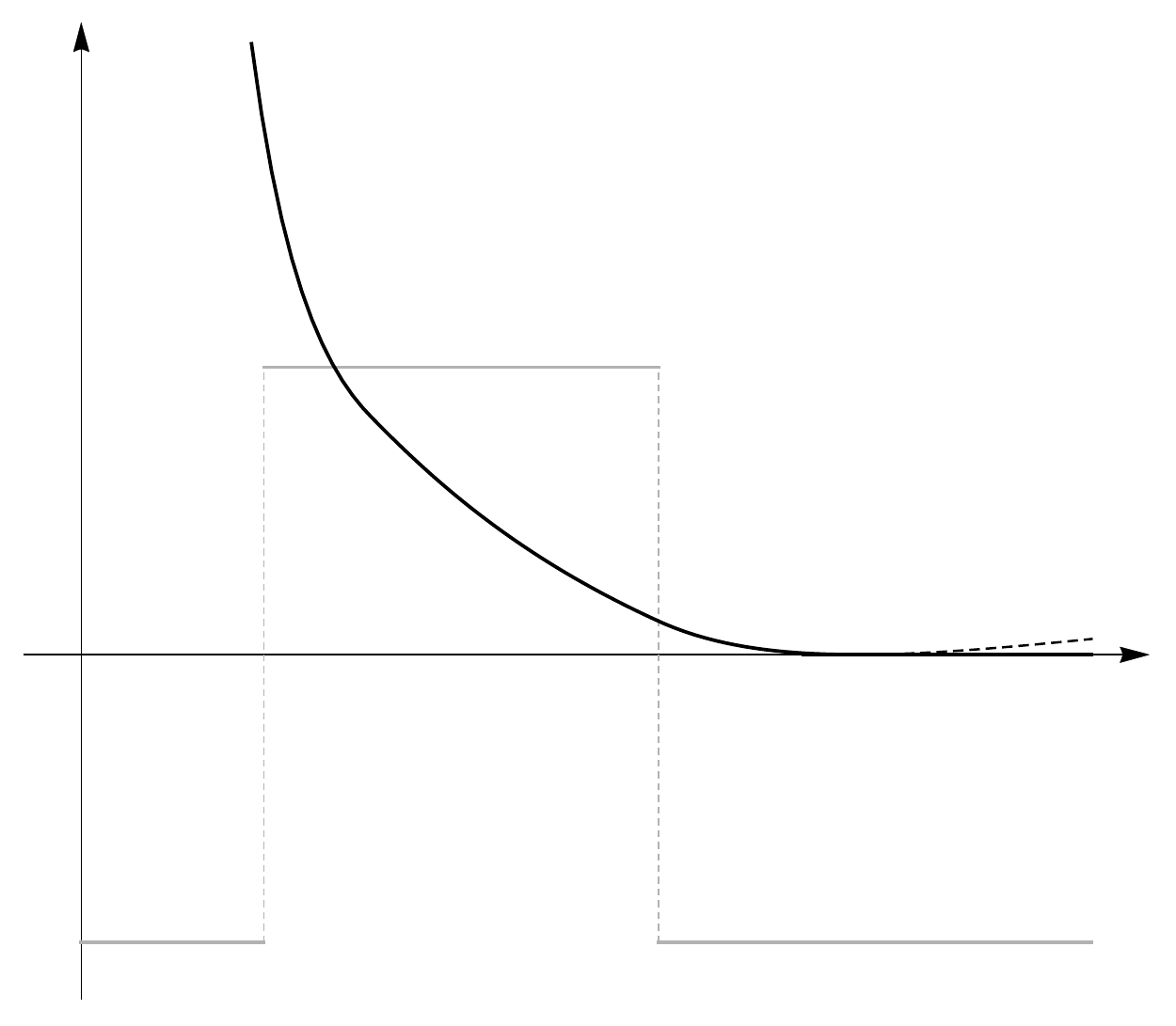}
%
\caption{\label{Figure 1}Value functions for Example 1. The grey lines represent the functions $\Pi$. The black lines represent the value functions $V$. The  dashed lines represent the functions $v_{b,0}$. 
Left-hand picture: $x_1=1$, $x_2=2$. Right-hand picture: $x_1=\frac{19}{30}$, $x_2=2$. In both cases, $\sigma = 1$, $d_1=-2$, $d_2=-1$. Figures drawn to the same scale.
}
\end{figure}

Similar examples with $0<d_1<d_2$ showing that the maximal interval can be either $]a,b[$, with $0<a<x_1<x_2 < b < +\infty$, or $]a,+\infty[$, with $0< a <x_1$ can easily be constructed.

\underline{Example 2:} \label{Example 2}
Fix $0<x_1<x_2<x_3<x_4<+\infty$, and let $\Pi$ be the piecewise constant function
\[
\Pi(x) = 2 \chi_{[x_1,x_2]}(x) + 2 \chi_{[x_3,x_4]}(x) -1 .
\]
Thus, $\Pi$ is positive in two separate intervals.
This is the case discussed in the remarks following Theorem 2.3 of R{\"u}schendorf and Urusov \cite{ruschendorf2008class}, and Theorem 2.2 of Belomestny, R{\"u}schendorf and Urusov \cite{belomestny2010optimal}. 
To discuss this case, we introduce the functions
\[
\Pi_1(x) = 2 \chi_{[x_1,x_2]}(x) -1 ,
\qquad
\Pi_2(x) = 2 \chi_{[x_3,x_4]}(x) -1 .
\]
Let $V$, $V_1$, $V_2$ be the value functions corresponding to $\Pi$, $\Pi_1$, $\Pi_2$, respectively, and let $v_{a,0}$,  $v_{a,0}^1$, $v_{a,0}^2$ be the corresponding functions defined by \eqref{Eq v_a0 GBM}.

In \cite{ruschendorf2008class,belomestny2010optimal} it is remarked that if the support of $V_1$ is an interval $[a,b]$ with $0 <a < b < +\infty$, then $V_1$ solves both the free-boundary problem corresponding to $\Pi_1$ and the free-boundary problem corresponding to $\Pi$, but $V_1$ may coincide or not with $V$ in $[a,b]$.
We will show that the results in Section \ref{S results} above easily distinguish these cases.

Suppose that $d_2>0$ (the case $d_1<0$ is analogous).
From Example 1, there are constants $0<a_1 <x_1 <  x_2 < b_1 \leq +\infty$ such that:
\begin{align*}
V_1(x) = &
\left\{ 
\begin{array}{ll}
v^1_{a_1,0}(x), & \text{for } x \in ]a_1,b_1[ ,
\\
0, & \text{for } x \notin ]a_1,b_1[ .
\end{array}
\right. 
\end{align*}
Since $]a_1,b_1[$ is maximal for \eqref{Eq local positive curve} with respect to $\Pi_1$, $v^1_{a_1,0}$ is non-negative in $]0,+\infty[$.
It is easy to check that $v_{a_1,0}$ coincides with $v_{a_1,0}^1$ in the interval $]0,x_3]$ but these functions are distinct in the interval $]x_3,+\infty[$.
Thus, it may happen that $v_{a_1,0}(x) < 0 $ for some $x>x_3$.
In that case, the Proposition \ref{P maximal interval} shows that $]a_1,b_1[$ is not maximal with respect to $\Pi$ and therefore $v_{a_1,0}$ does not coincide with the value function $V$ in $[a_1,b_1]$.
The Figure \ref{Figure 2} shows an example of this configuration.
Conversely, if $v_{a_1,0}(x) \geq 0 $ for every $x \in ]0,+\infty[$, then $]a_1,b_1[$ is maximal with respect to $\Pi$ and $V$ coincides with $v^1_{a_1,0}$ in the interval $[a_1,b_1]$.
The right-hand picture in Figure \ref{Figure 3} shows an example of this configuration.
\begin{figure}[h!]
\includegraphics[width=120mm]{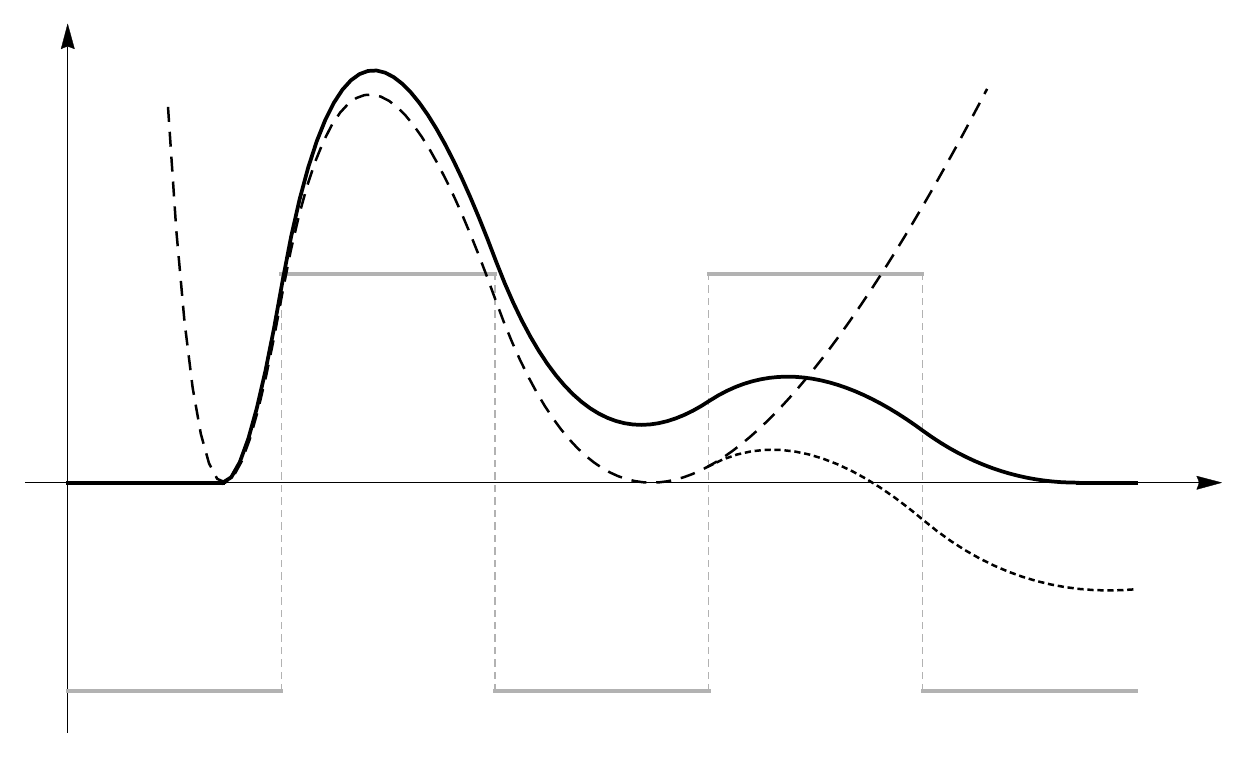} 
\caption{\label{Figure 2}Example 2: Value functions for $\Pi_1$ and $\Pi$. 
The grey line represents the function $\Pi$.
The dashed line represents the function $v^1_{a_1,0}$.
The dotted line represents the function $v_{a_1,0}$.
The black line represent the value function for $\Pi$.
Parameters: $x_1=1$, $x_2=2$, $x_3=3$, $x_4= 4$, $\sigma = \frac 1 3$, $d_1=-1$, $d_2=1$. 
}
\end{figure}

Another way to see the same phenomenon is as follows.
Let $]a_1,b_1[$, $]a_2,b_2[$ be the maximal intervals with respect to $\Pi_1$ and $\Pi_2$, respectively (by Example 1, these intervals exist and are unique, with $a_1,a_2 >0$).
If $a_2 < b_1$, then the Proposition \ref{P maximal interval} states that these intervals cannot be maximal with respect to $\Pi$. Hence, the maximal interval for $\Pi$ must be a larger interval $]a,b[$ containing $]a_1,b_1[ \cup ]a_2,b_2[$.
Conversely, if $a_2 \geq b_1$, then $]a_1,b_1[$, $]a_2,b_2[$ are both maximal with respect to $\Pi$, and therefore the value function is
\[
V(x) = 
\left\{ 
\begin{array}{ll}
v^1_{a_1,0}(x) = v_{a_1,0}(x) , & \text{for } x \in [a_1,b_1],
\\
v^2_{a_2,0}(x) = v_{a_2,0}(x) , & \text{for } x \in [a_2,b_2],
\\
0, & \text{for } x \notin [a_1,b_1] \cup [a_2,b_2] .
\end{array}
\right.
\]
The Figure \ref{Figure 3} shows an example with $a_2 < b_1$ and an example with $a_2 > b_1$.
\begin{figure}[h!]
\includegraphics[width=75mm]{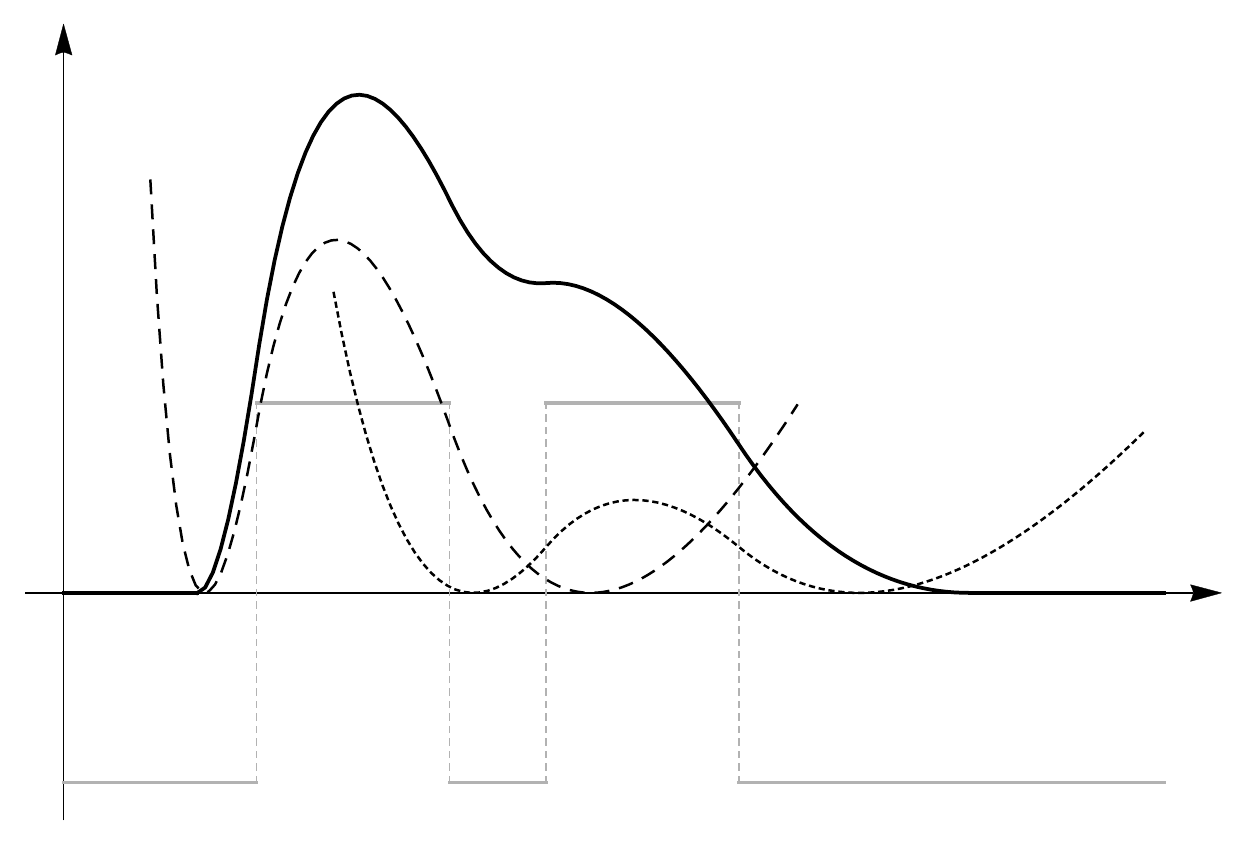} 
\hspace{5mm}
\includegraphics[width=75mm]{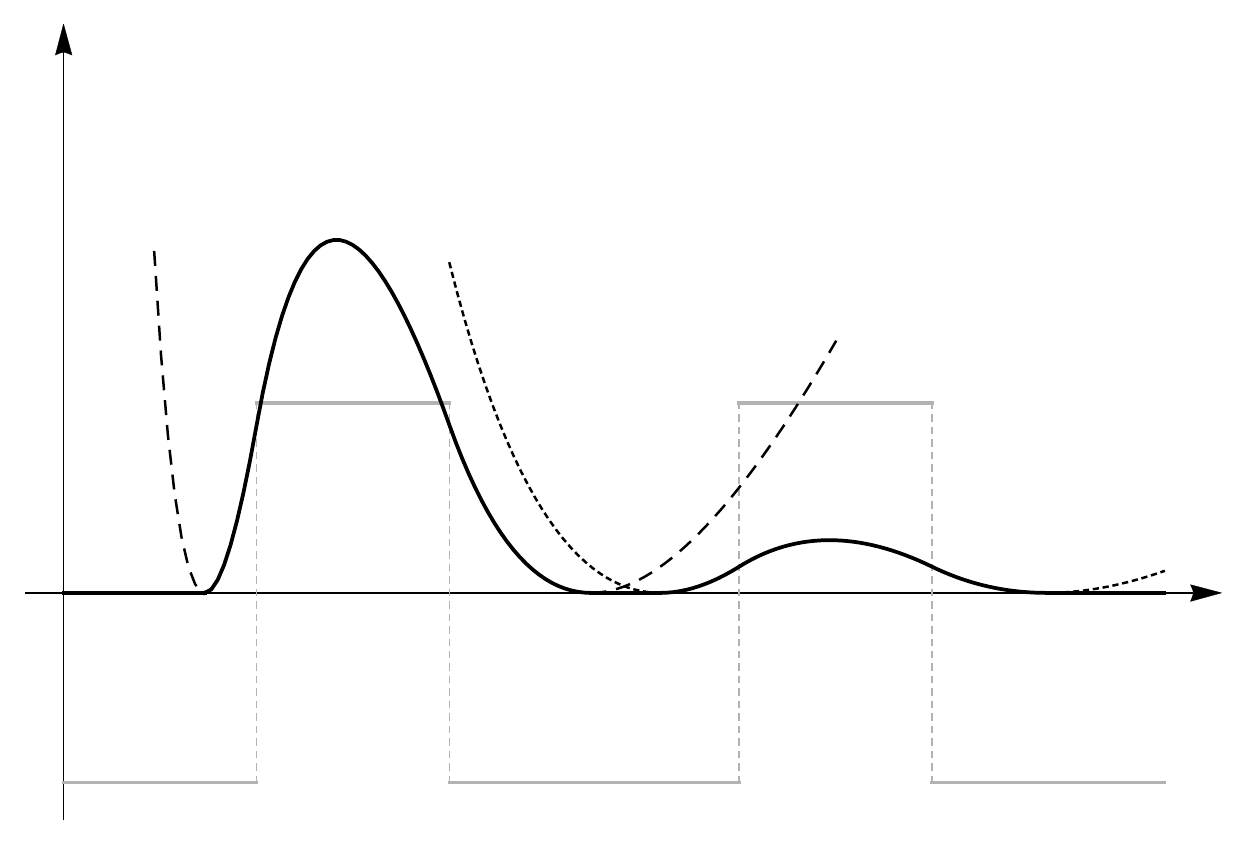}
\caption{\label{Figure 3}Example 2: 
Value functions for $\Pi_1$, $\Pi_2$, and $\Pi$. 
Grey lines represent the function $\Pi$.
Dashed lines represent the functions $v^1_{a_1,0}$.
Dotted lines represent the functions $v^2_{a_2,0}$.
Black lines represent the value function for $\Pi$.
Left-hand picture: $x_1=1$, $x_2=2$, $x_3=2.5$, $x_4= 3.5$, $b_1 \approx 2.73 > a_2 \approx 2.12$.
Right-hand picture: $x_1=1$, $x_2=2$, $x_3=3.5$, $x_4= 4.5$, $b_1 \approx 2.73 < a_2 \approx 3.09$.
In both cases, $\sigma = \frac 1 3$, $d_1=-1$, $d_2=1$. 
Figures drawn to the same scale.
}
\end{figure}
Notice that the value function depicted on the right-hand side of Figure \ref{Figure 3} is not $F$-concave with respect to any strictly increasing function $F$.
This shows that the result by Dayanik and Karatzas \cite{dayanik2003optimal} for the problem \eqref{Eq terminal criterion}--\eqref{Eq discount factor}--\eqref{Eq SDE} does not extend to the problem \eqref{Eq outcome}--\eqref{Eq discount factor}--\eqref{Eq SDE}.

\section{Proofs}\label{S proofs}

\subsection{Some preliminary results}\label{SS preliminary results}

The results in Section \ref{S results} depend critically on the following Proposition.

\begin{prop}
\label{P sign phi_12}
$\phi_{12}(a,b)>0 $ for every $a,b \in I$, with $a< b$.
\end{prop}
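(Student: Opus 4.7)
I would argue by contradiction. Suppose $\phi_{12}(a_0, b_*) \le 0$ for some $a_0 < b_*$ in $I$. Since $\phi_{12}(a_0, a_0) = 0$ and $\phi_{22}(a_0, a_0) = 1$, the function $y \mapsto \phi_{12}(a_0, y)$ is strictly positive just to the right of $a_0$, so by continuity I can take $b_0 \in (a_0, b_*]$ to be its smallest zero in $(a_0, M)$; then $\phi_{12}(a_0, \cdot) > 0$ on $(a_0, b_0)$ and $\phi_{22}(a_0, b_0) < 0$, since a double zero would, by IVP uniqueness for \eqref{Eq ODE}, force $\phi_{12}(a_0, \cdot) \equiv 0$.

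The first key sub-step is to show $\phi_{12}(z, b_0) > 0$ for every $z \in (a_0, b_0)$. From the cocycle identity for $\Phi$ one derives $\frac{\partial}{\partial z}\phi_{12}(z,y) = -\phi_{11}(z,y) + \frac{2\alpha(z)}{\sigma(z)^2}\phi_{12}(z,y)$. Let $b_0(z)$ denote the smallest zero of $\phi_{12}(z,\cdot)$ in $(z,M)$; on the curve $\phi_{12}(z,b_0(z))=0$ one has $\phi_{11}(z,b_0(z))\phi_{22}(z,b_0(z)) = \det\Phi(z,b_0(z)) > 0$, which combined with $\phi_{22}(z,b_0(z)) < 0$ (as for $a_0$) forces $\phi_{11}(z,b_0(z)) < 0$. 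The implicit function theorem then yields $b_0'(z) = \phi_{11}(z,b_0(z))/\phi_{22}(z,b_0(z))>0$, so $b_0(z) > b_0$ for every $z > a_0$, meaning $\phi_{12}(z,\cdot)$ is still positive at $b_0$.

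Next I would use the representation \eqref{Eq vab} to construct, for each $b \in (a_0, b_0)$, the unique solution of \eqref{Eq ODE} with $\Pi$ replaced by $\Pi^+$ and satisfying $v(a_0) = v(b) = 0$:
\[
u_b(x) = \frac{\int_{a_0}^b \frac{2\Pi^+(z)}{\sigma(z)^2}\,\phi_{12}(z, b)\,dz}{\phi_{12}(a_0, b)}\,\phi_{12}(a_0, x) - \int_{a_0}^x \frac{2\Pi^+(z)}{\sigma(z)^2}\,\phi_{12}(z, x)\,dz .
\]
An It\^o-formula computation applied to $e^{-\rho_t}u_b(X_t)$ stopped at $\tau_{a_0,b} := \inf\{t\ge 0 : X_t\notin(a_0,b)\}$, with standard localization to handle the unbounded coefficients, yields the probabilistic identification
\[
u_b(x) = \mathbb E_x\!\left[\int_0^{\tau_{a_0,b}} e^{-\rho_t}\,\Pi^+(X_t)\,dt\right] \le \mathbb E_x\!\left[\int_0^{\tau_I} e^{-\rho_t}\,\Pi^+(X_t)\,dt\right] < +\infty
\]
by Assumption \ref{Assumption Pi}, so $u_b(x)$ is uniformly bounded as $b\uparrow b_0$. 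But in the explicit formula, $\phi_{12}(a_0,b)\to 0$ through positive values, $\phi_{12}(a_0,x)>0$ is fixed, and the subtracted integral converges, so $u_b(x)\to+\infty$ as soon as $\int_{a_0}^{b_0}\frac{\Pi^+(z)}{\sigma(z)^2}\,\phi_{12}(z,b_0)\,dz>0$.

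The main obstacle is guaranteeing this last strict positivity, since $\{\Pi>0\}\cap(a_0,b_0)$ may be negligible for the particular pair $(a_0,b_0)$ at hand. I would remove the obstruction by exploiting the freedom in choosing the base point: fix any Lebesgue point $x_0\in\mathcal L^+$ (which exists by Assumption \ref{Assumption Pi}), and use continuity together with the strict monotonicity of $z\mapsto b_0(z)$ to replace $(a_0,b_0)$ by a conjugate pair $(a',b_0(a'))$ containing $x_0$. The preliminary step applied at $a'$ gives $\phi_{12}(z,b_0(a'))>0$ throughout $(a',b_0(a'))$, so the integrand is strictly positive on a set of positive Lebesgue measure around $x_0$, and the blow-up argument yields the desired contradiction.
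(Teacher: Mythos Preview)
Your overall strategy is the same as the paper's: assume a first conjugate pair $(a_0,b_0)$, show the Dirichlet solutions $u_b$ on $[a_0,b]$ blow up as $b\uparrow b_0$, identify $u_b$ with an expectation via It\^o's formula, and contradict Assumption~\ref{Assumption Pi}. The paper packages the same ideas into Lemmata~\ref{L sign phi_12}--\ref{L nonintegrability}.

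There is, however, a genuine technical gap in your first key sub-step. You invoke the implicit function theorem to obtain $b_0'(z)=\phi_{11}(z,b_0(z))/\phi_{22}(z,b_0(z))$ and hence strict monotonicity of $z\mapsto b_0(z)$. But under the standing assumptions, $\alpha/\sigma^2$ is only locally integrable, so $z\mapsto\phi_{12}(z,y)$ is merely absolutely continuous: the partial derivative $\partial_z\phi_{12}(z,y)=-\phi_{11}(z,y)+\tfrac{2\alpha(z)}{\sigma(z)^2}\phi_{12}(z,y)$ exists only for a.e.\ $z$ and is not continuous, so the classical implicit function theorem does not apply. The paper bypasses this entirely by an algebraic argument (Lemma~\ref{L sign phi_12}(c)): from the cocycle relation $\Phi(a_0,b_0)=\Phi(z,b_0)\Phi(a_0,z)$ and $\phi_{12}(a_0,b_0)=0$ one reads off
\[
\phi_{12}(z,b_0)=-\frac{\phi_{11}(a_0,b_0)\,\phi_{12}(a_0,z)}{\det\Phi(a_0,z)}>0,
\]
since $\phi_{11}(a_0,b_0)<0$ by the Wronskian. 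This gives your conclusion $\phi_{12}(z,b_0)>0$ on $(a_0,b_0)$ with no differentiability in $z$ needed.

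A second, smaller point: your final paragraph asserts that monotonicity of $b_0(\cdot)$ lets you slide the conjugate pair to contain a prescribed $x_0\in\mathcal L^+$, but you have established monotonicity only locally (and only where $b_0$ is defined), and you have not argued that the family $\{(z,b_0(z))\}$ actually reaches $x_0$. The paper handles the analogous issue by a dedicated shifting lemma (Lemma~\ref{L roots phi_12}) showing that for any $a'<a_0$ one finds a zero of $\phi_{12}(a',\cdot)$ in $[a_0,b_0)$, and symmetrically on the right; combined with a hitting-probability estimate (Lemma~\ref{L probability of right exit}) this transports the blow-up to any starting point. Your sketch gestures at the same mechanism, but as written it leans on a differentiability you do not have.
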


The proof of this Proposition requires several intermediate lemmata, which we formulate and prove below.
As a corollary, we will prove the following.

\begin{prop}
\label{P assumption Pi vs Pia}
Under Assumptions \ref{Assumption Engelbert} and \ref{Assumption r}, Assumptions \ref{Assumption Pi} and \ref{Assumption Pia} are equivalent.
\end{prop}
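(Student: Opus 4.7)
The implication Assumption \ref{Assumption Pi} $\Rightarrow$ Assumption \ref{Assumption Pia} is trivial; only the converse needs argument. Assume Assumption \ref{Assumption Pia} and fix $x_0 \in I$ at which $g(x_0) < +\infty$, where $g(y) := \mathbb E_y[\int_0^{\tau_I} e^{-\rho_t}\Pi^+(X_t)\,dt] \in [0,+\infty]$. My plan is to show $g(x) < +\infty$ for arbitrary $x \in I$ by localising to a compact sub-interval $[a,b] \subset I$ containing both $x_0$ and $x$, applying the strong Markov property at the exit time, and using Proposition \ref{P sign phi_12} to guarantee strict positivity of the associated discounted hitting quantities.

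Choose $a,b \in I$ with $a < \min(x_0,x)$ and $b > \max(x_0,x)$, and let $\tau_{a,b}$ be the first exit time of $X$ from $]a,b[$. Since $[a,b] \subset I$ is compact, $\tau_{a,b} < \tau_I$ and $X_{\tau_{a,b}} \in \{a,b\}$ almost surely. The strong Markov property then yields, for every $y \in ]a,b[$,
\begin{equation*}
g(y) \;=\; h_{a,b}(y) + p_a(y)\,g(a) + p_b(y)\,g(b),
\end{equation*}
with $h_{a,b}(y) = \mathbb E_y[\int_0^{\tau_{a,b}} e^{-\rho_t}\Pi^+(X_t)\,dt]$ and $p_\ell(y) = \mathbb E_y[e^{-\rho_{\tau_{a,b}}}\chi_{X_{\tau_{a,b}}=\ell}]$ for $\ell \in \{a,b\}$. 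An It\^o/Dynkin argument identifies $h_{a,b}$ with the Carath\'eodory solution of \eqref{Eq ODE} with $\Pi$ replaced by $\Pi^+$ and zero boundary values at $a,b$; this solution exists via \eqref{Eq vab} (since $\phi_{12}(a,b) > 0$ by Proposition \ref{P sign phi_12}) and, being continuous on the compact interval $[a,b]$, is bounded there.

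For the positivity of $p_a,p_b$, apply It\^o to $e^{-\rho_{t \wedge \tau_{a,b}}}\phi_{12}(a, X_{t \wedge \tau_{a,b}})$ and to $e^{-\rho_{t \wedge \tau_{a,b}}} w(X_{t \wedge \tau_{a,b}})$, where $w(y) := \phi_{12}(a,b)\,\phi_{11}(a,y) - \phi_{11}(a,b)\,\phi_{12}(a,y)$ is the homogeneous solution satisfying $w(a) = \phi_{12}(a,b) > 0$ and $w(b) = 0$. Both processes are bounded local martingales (boundedness of $\phi_{12}(a,\cdot)$ and $w$ on $[a,b]$, combined with a.s.\ finiteness of $\rho_{\tau_{a,b}}$ via the occupation-time formula and $r/\sigma^2 \in L^1_{\mathrm{loc}}$), so optional stopping gives $p_b(y) = \phi_{12}(a,y)/\phi_{12}(a,b)$ and $p_a(y) = w(y)/w(a)$. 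Proposition \ref{P sign phi_12} directly yields $p_b(y) > 0$; for $p_a(y) > 0$, Liouville's formula shows that the Wronskian $W(y) := \det\Phi(a,y) = \phi_{11}(a,y)\phi_{22}(a,y) - \phi_{12}(a,y)\phi_{21}(a,y)$ equals $\exp(-\int_a^y 2\alpha(s)/\sigma^2(s)\,ds) > 0$, so $y \mapsto \phi_{11}(a,y)/\phi_{12}(a,y)$ is strictly decreasing on $]a,M[$, which forces $w > 0$ on $]a,b[$.

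Since $g \geq 0$, applying the decomposition at $y=x_0$ together with $g(x_0) < +\infty$ and $p_a(x_0),p_b(x_0) > 0$ forces $g(a), g(b) < +\infty$; applying it at $y=x$ then gives $g(x) = h_{a,b}(x) + p_a(x)g(a) + p_b(x)g(b) < +\infty$, completing the proof. The main technical obstacle is the rigorous justification of the optional-stopping step for the two local martingales up to the a.s.\ finite but unbounded stopping time $\tau_{a,b}$; this is handled by dominated convergence using uniform boundedness of the martingale integrands on $[a,b]$ together with a.s.\ finiteness of $\rho_{\tau_{a,b}}$ (from the occupation-time formula applied to $r/\sigma^2 \in L^1_{\mathrm{loc}}$).
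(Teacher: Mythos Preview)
Your approach is valid and reaches the conclusion, but it differs from the paper's in both direction and machinery. The paper argues the contrapositive: if $g(c)=+\infty$ at some point $c$, then for any other $x$ one applies the strong Markov property at the first hitting time $\tau_c$ of the \emph{single} point $c$ (rather than at an exit time) to obtain
\[
g(x)\;\ge\;\mathbb E_x\!\left[e^{-\rho_{\tau_c}}\chi_{\{\tau_c<\tau_{a'}\}}\right] g(c),
\]
and the factor in front is strictly positive by Lemma~\ref{L probability of right exit} together with a.s.\ finiteness of $\rho_{\tau_c}$ on $\{\tau_c<\tau_{a'}\}$; this is precisely the ``final argument in the proof of Lemma~\ref{L nonintegrability}''. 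No identification of $h_{a,b}$ or of the exit kernels $p_a,p_b$ is needed---only the qualitative strict positivity of one discounted hitting factor. Your route instead passes through the full exit-time decomposition and the explicit formulas $p_b(y)=\phi_{12}(a,y)/\phi_{12}(a,b)$, $p_a(y)=w(y)/w(a)$, which yields more information but is strictly more work for the conclusion at hand.

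One technical point should be tightened. You claim that $e^{-\rho_{t\wedge\tau_{a,b}}}\phi_{12}(a,X_{t\wedge\tau_{a,b}})$ is a \emph{bounded} local martingale; this is not true in general, since only $r/\sigma^2\in L^1_{\mathrm{loc}}$ is assumed and $r$ may be negative, so $e^{-\rho_t}$ need not be bounded. A.s.\ finiteness of $\rho_{\tau_{a,b}}$ does not by itself supply an integrable majorant for dominated convergence. The correct domination is the one used in the proof of Lemma~\ref{L integrability 2}: for every stopping time $\theta\le\tau_{a,b}$,
\[
e^{-\rho_\theta}\;\le\;1+\int_0^{\tau_{a,b}}e^{-\rho_s}r^-(X_s)\,ds,
\]
and the right-hand side has finite expectation by Lemmata~\ref{L existence of positive solution} and~\ref{L integrability 1} applied with $g=r^-$. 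Alternatively, you can bypass the explicit optional-stopping identities entirely: for your argument you only need $p_a(x_0),p_b(x_0)>0$ (immediate from $\rho_{\tau_{a,b}}<\infty$ a.s.\ and Lemma~\ref{L probability of right exit}) and $p_a(x)+p_b(x)=\mathbb E_x[e^{-\rho_{\tau_{a,b}}}]<\infty$ (from the domination above), which is closer in spirit to what the paper actually does.
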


Another easy corollary of Proposition \ref{P sign phi_12} is the following Lemma, that will be useful to several arguments in the next subsections.

\begin{lemma}
\label{L no double cross}
If $u,v$ are solutions of \eqref{Eq ODE}, and there are two points $a,b \in I$ such that
\begin{equation}
\notag
u(a) = v(a), \qquad
u(b) = v(b), \qquad
a \neq b ,
\end{equation}
then $u \equiv v$.
\end{lemma}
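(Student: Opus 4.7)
The plan is to reduce the lemma to a statement about the homogeneous version of equation \eqref{Eq ODE} and then invoke Proposition \ref{P sign phi_12} directly.

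First, I would set $w = u - v$. Since $u$ and $v$ both solve \eqref{Eq ODE} with the same forcing term $\Pi$, linearity gives that $w$ solves the homogeneous equation
\begin{equation}
\notag
r(x) w(x) - \alpha (x) w'(x) - \tfrac{\sigma(x)^2}{2} w''(x) = 0,
\end{equation}
or equivalently, that the vector $W = (w, w')^T$ is a Carath\'eodory solution of the homogeneous system $W' = A W$. Assumption \ref{Assumption Engelbert} ensures uniqueness of solutions of the initial value problem for this homogeneous system.

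Next I would assume without loss of generality that $a < b$. By the defining property of the fundamental matrix, the unique solution of $W' = AW$ with $W(a) = (0, w'(a))^T$ is
\begin{equation}
\notag
W(x) = \Phi (a,x) \begin{pmatrix} 0 \\ w'(a) \end{pmatrix},
\qquad \text{so that} \qquad
w(x) = w'(a)\, \phi_{12}(a,x).
\end{equation}
Evaluating at $x = b$ and using $w(b) = 0$ yields $w'(a)\, \phi_{12}(a,b) = 0$. Proposition \ref{P sign phi_12} gives $\phi_{12}(a,b) > 0$, hence $w'(a) = 0$. Therefore $W(a) = (0,0)^T$, and by uniqueness the trivial solution is the only one, i.e.\ $w \equiv 0$ on $I$, which is the desired conclusion $u \equiv v$.

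The only non-trivial ingredient is Proposition \ref{P sign phi_12}; everything else is linearity and uniqueness for the homogeneous Carath\'eodory IVP. So there is no real obstacle once that proposition is in hand — this lemma is essentially a one-line corollary packaged for repeated later use.
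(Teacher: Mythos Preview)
Your proof is correct and is essentially the same as the paper's: the paper simply says the lemma ``follows immediately from Proposition \ref{P sign phi_12} and equality \eqref{Eq v nonhomogeneous}'', which amounts to exactly your computation that $u-v = (u'(a)-v'(a))\,\phi_{12}(a,\cdot)$ and then $\phi_{12}(a,b)>0$ forces $u'(a)=v'(a)$.
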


\begin{proof}
Follows immediately from Proposition \ref{P sign phi_12} and equality \eqref{Eq v nonhomogeneous}.
\end{proof}

To prove Proposition \ref{P sign phi_12}, we start with Lemmata \ref{L sign phi_12} and \ref{L roots phi_12}, which contain some simple properties of the fundamental solution $\Phi$.

\begin{lemma}
\label{L sign phi_12}
For every $a \in I$, the following statements are true:
\begin{itemize}
\item[a)]
There is some $b \in ]a,M[$ such that $\phi_{12}(a,x)>0$ for every $x \in ]a,b[$.
\item[b)]
If there is some $x \in ]a,M[$ such that $\phi_{12}(a,x)=0$, then $\phi_{11}(a,b)<0$ and $\phi_{22}(a,b)<0$ for $b= \min \left\{ x>a: \phi_{12}(a,x) =0 \right\}$.
\item[c)]
If the function $x \mapsto \phi_{12}(a,x)$ is strictly positive in the interval $]a,b[$, then the function $x \mapsto \phi_{12}(x,b)$ is strictly positive in the interval $]a,b[$.
\end{itemize}
\end{lemma}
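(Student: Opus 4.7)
The plan is to exploit two basic facts about the fundamental matrix $\Phi$. First, from $\Phi(a,a)=Id$ and the matrix equation $\partial_y\Phi(a,y)=A(y)\Phi(a,y)$, one reads off $\phi_{11}(a,a)=\phi_{22}(a,a)=1$, $\phi_{12}(a,a)=\phi_{21}(a,a)=0$, and the pointwise identities $\partial_y\phi_{11}(a,y)=\phi_{21}(a,y)$, $\partial_y\phi_{12}(a,y)=\phi_{22}(a,y)$. Second, Abel's identity gives
\begin{equation*}
D(y):=\det\Phi(a,y)=\exp\!\left(-\int_a^y \tfrac{2\alpha(z)}{\sigma(z)^2}\,dz\right)>0 \qquad \forall y \in I,
\end{equation*}
since $\mathrm{tr}\,A=-2\alpha/\sigma^2$. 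Part (a) is then immediate: continuity of $\phi_{22}(a,\cdot)$ together with $\phi_{22}(a,a)=1$ gives $\phi_{22}(a,\cdot)>0$ on some interval $]a,b[$, and integration yields $\phi_{12}(a,y)>0$ there.

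For part (b), let $b=\min\{x>a:\phi_{12}(a,x)=0\}$. Since $\phi_{12}(a,\cdot)>0$ on $]a,b[$ and vanishes at $b$, its left derivative at $b$, namely $\phi_{22}(a,b)$, is non-positive. Equality would make the second column of $\Phi(a,b)$ zero, forcing $D(b)=0$ and contradicting Abel's identity; hence $\phi_{22}(a,b)<0$. Then the expansion $D(b)=\phi_{11}(a,b)\phi_{22}(a,b)-\phi_{12}(a,b)\phi_{21}(a,b)=\phi_{11}(a,b)\phi_{22}(a,b)>0$ forces $\phi_{11}(a,b)<0$.

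For part (c), I would use the semigroup identity $\Phi(a,b)=\Phi(y,b)\Phi(a,y)$, invert the right-hand factor, and extract the $(1,2)$ entry to obtain
\begin{equation*}
\phi_{12}(y,b)=\frac{1}{D(y)}\bigl[\phi_{11}(a,y)\phi_{12}(a,b)-\phi_{11}(a,b)\phi_{12}(a,y)\bigr]=:\frac{\psi(y)}{D(y)}.
\end{equation*}
The key observation is that $\psi$, being a linear combination of the two independent homogeneous solutions $\phi_{11}(a,\cdot)$ and $\phi_{12}(a,\cdot)$, is itself a solution of \eqref{Eq ODE} with $\Pi\equiv 0$; moreover $\psi(a)=\phi_{12}(a,b)\ge 0$ (by continuity from the hypothesis) and $\psi(b)=0$. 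I then split on the sign of $\phi_{12}(a,b)$. If $\phi_{12}(a,b)=0$, then the hypothesis of (c) forces $b$ to be the first zero of $\phi_{12}(a,\cdot)$ after $a$, so $\psi=-\phi_{11}(a,b)\phi_{12}(a,\cdot)$, and part (b) gives $-\phi_{11}(a,b)>0$, hence $\psi>0$ on $]a,b[$. If $\phi_{12}(a,b)>0$, I consider $f:=\psi/\phi_{12}(a,\cdot)$ on $]a,b[$: a direct computation yields $f'=-W/\phi_{12}(a,\cdot)^2$ with $W$ the Wronskian of $\psi$ and $\phi_{12}(a,\cdot)$; since these two solutions are linearly independent (their values at $a$ are $\phi_{12}(a,b)>0$ and $0$ respectively), $W$ has constant sign and $f$ is strictly monotone. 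Combined with $f(y)\to+\infty$ as $y\downarrow a$ and $f(b)=0$, monotonicity forces $f>0$ on $]a,b[$, giving $\psi>0$ and hence $\phi_{12}(y,b)>0$.

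The main obstacle is the sub-case $\phi_{12}(a,b)>0$ in part (c): here (b) is inapplicable and the sign of $\psi$ is not visible from the boundary values alone. The unlocking insight is that $\psi$ itself solves the homogeneous linear equation, so the quotient $\psi/\phi_{12}(a,\cdot)$ inherits monotonicity from a nowhere-vanishing Wronskian, reducing the problem to the two trivial boundary evaluations at $a$ and $b$.
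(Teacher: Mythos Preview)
Your proof is correct and follows essentially the same route as the paper's: the same derivation of $\phi_{12}(y,b)$ from the semigroup identity, the same case split on whether $\phi_{12}(a,b)$ vanishes, and the same monotonicity argument for the quotient. Your Wronskian formulation of $f'$ is exactly the paper's computation $\partial_x\bigl(\phi_{11}(a,x)/\phi_{12}(a,x)\bigr)=-\det\Phi(a,x)/\phi_{12}(a,x)^2$, since your $f$ is an affine function of that ratio; the arguments are equivalent up to notation.
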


\begin{proof}
The statement {\it (a)} follows immediately from the fact that 
\begin{equation}
\notag
\frac{\partial }{\partial x}\phi_{12}(a,x) = \phi_{22}(a,x) \qquad \forall x \in I,
\end{equation}
and $\phi_{12}(a,a)=0$, $\phi_{22}(a,a)=1$.

To prove the statement {\it (b)}, notice that $\phi_{22}(a,b) = \frac \partial{\partial x} \phi_{1,2}(a,b) \leq 0$.
Since $\mathrm{det} \Phi (a,x) >0$ for every $x \in I$, $\phi_{12}(a,b)=0$ implies $\phi_{11}(a,b) \phi_{22}(a,b) >0$, and the statement follows.

Finally, to prove the statement {\it (c)}, we start by recalling that $\Phi(a,b) = \Phi(x,b) \Phi(a,x)$.
Therefore:
\begin{align}
\label{Z03}
\phi_{12}(x,b) = &
\frac{1}{\mathrm{det}\Phi(a,x)} \left( \phi_{12}(a,b) \phi_{11}(a,x) - \phi_{11}(a,b) \phi_{12} (a,x) \right)  .
\end{align}
If $\phi_{12}(a,b) >0$, this reduces to
\begin{align*}
\phi_{12}(x,b) = &
\frac{\phi_{12}(a,b) \phi_{12} (a,x)}{\mathrm{det}\Phi(a,x)} \left(  \frac{\phi_{11}(a,x)}{\phi_{12}(a,x)} - \frac{\phi_{11}(a,b)}{\phi_{12}(a,b)} \right) .
\end{align*}
A simple computation shows that
\begin{equation}
\notag
\frac{\partial }{\partial x} \frac{\phi_{11}(a,x)}{\phi_{12}(a,x)} = 
- \frac{\mathrm{det} \Phi(a,x)}{\phi_{12}(a,x)^2} <0 .
\end{equation}
Hence, the function $x \mapsto \frac{\phi_{11}(a,x)}{\phi_{12}(a,x)} $ is strictly decreasing in $]a,b]$ and therefore $\phi_{12}(x,b) >0 $ for every $x \in ]a,b[$. If $\phi_{12}(a,b)=0$, then the equality \eqref{Z03} reduces to
\begin{align*}
\phi_{12}(x,b) = & 
- \frac{\phi_{11}(a,b) \phi_{12} (a,x)}{\mathrm{det}\Phi(a,x)} .
\end{align*}
By the statement {\it (b)}, $\phi_{11}(a,b) <0$ and therefore, $\phi_{12}(x,b)>0$ for every $x \in ]a,b[$.
\end{proof}

\begin{lemma}
\label{L roots phi_12}
Suppose that there is some $a,b \in I$ such that $a<b$ and $\phi_{12}(a,b)=0$. 
Then, for every $a^\prime \in ]m,a[$ there is some $b^\prime \in [a , b[$ such that $\phi_{12}(a^\prime,b^\prime)=0$. Similarly, for every $b^\prime \in ]b,M[$ there is some $a^\prime \in ]a , b]$ such that $\phi_{12}(a^\prime,b^\prime)=0$.
\end{lemma}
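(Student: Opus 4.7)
The plan is to derive both conclusions by combining the cocycle identity $\Phi(x,z)=\Phi(y,z)\Phi(x,y)$ with Lemma~\ref{L sign phi_12}(b), and then reading off a sign change from an intermediate value argument.

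For the first claim, fix $a'\in{]m,a[}$. Apply the cocycle identity $\Phi(a',b)=\Phi(a,b)\Phi(a',a)$ and read the $(1,2)$ entry:
\begin{equation}
\notag
\phi_{12}(a',b)=\phi_{11}(a,b)\phi_{12}(a',a)+\phi_{12}(a,b)\phi_{22}(a',a)=\phi_{11}(a,b)\phi_{12}(a',a),
\end{equation}
where the second equality uses the hypothesis $\phi_{12}(a,b)=0$. By Lemma~\ref{L sign phi_12}(b) (applied to the pair $(a,b)$, which we know satisfies $b=\min\{x>a:\phi_{12}(a,x)=0\}$ once we discard any earlier zero—if an earlier zero $b_0\in{]a,b[}$ existed, we would work with $(a,b_0)$ and still have $b_0<b$), one has $\phi_{11}(a,b)<0$. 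Hence $\phi_{12}(a',a)$ and $\phi_{12}(a',b)$ carry opposite signs, or else both vanish. Because $y\mapsto\phi_{12}(a',y)$ is continuous on $[a,b]$, the intermediate value theorem yields some $b'\in[a,b]$ with $\phi_{12}(a',b')=0$; the formula also shows $b'=b$ can only occur when $b'=a$ works simultaneously, so we may take $b'\in[a,b[$.

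For the second claim, fix $b'\in{]b,M[}$ and use the cocycle identity in the other direction, $\Phi(a,b')=\Phi(b,b')\Phi(a,b)$. Reading the $(1,2)$ entry gives
\begin{equation}
\notag
\phi_{12}(a,b')=\phi_{11}(b,b')\phi_{12}(a,b)+\phi_{12}(b,b')\phi_{22}(a,b)=\phi_{12}(b,b')\phi_{22}(a,b),
\end{equation}
and Lemma~\ref{L sign phi_12}(b) provides $\phi_{22}(a,b)<0$. The map $x\mapsto\phi_{12}(x,b')$ is continuous on $[a,b]$ (it obeys the adjoint linear equation $\partial_x\Phi(x,b')=-\Phi(x,b')A(x)$), and the identity above forces $\phi_{12}(a,b')$ and $\phi_{12}(b,b')$ to have opposite signs, or both to vanish. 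The intermediate value theorem therefore supplies $a'\in[a,b]$ with $\phi_{12}(a',b')=0$, and the degenerate case $\phi_{12}(a,b')=0$ forces $\phi_{12}(b,b')=0$ as well, so we may always choose $a'\in{]a,b]}$.

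The whole argument is essentially algebraic plus the IVT; the only point that requires attention is the edge case where the sign-changing identity collapses to $0=0$, but there the endpoint $b'=a$ (resp.\ $a'=b$) immediately supplies the required zero, so there is no real obstacle.
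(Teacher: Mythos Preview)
Your proof is correct and follows essentially the same route as the paper: both arguments reduce (without loss of generality) to the case where $b$ is the first zero of $\phi_{12}(a,\cdot)$ after $a$, invoke the cocycle identity for $\Phi$, and then use Lemma~\ref{L sign phi_12}(b) to force a sign change that a continuity/intermediate value argument turns into the required zero. The only cosmetic difference is that the paper phrases the first claim by tracking $x\mapsto\phi_{12}(a',x)$ as $x\uparrow b$ and noting it becomes negative, whereas you evaluate directly at $x=b$ and compare with the value at $x=a$; the content is the same.
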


\begin{proof}
Fix $a,b \in I$ such that $a<b$ and $\phi_{12}(a,b)=0$.
Without loss of generality, we may assume that $\phi_{12}(a,x)>0$ for every $x \in ]a,b[$ (take a subinterval, if necessary).

Fix $a^\prime < a$.
Since $x \geq a$, $\Phi(a^\prime ,x) = \Phi(a,x) \Phi(a^\prime , a ) $, we have 
\begin{equation}
\notag
\phi_{12}(a^\prime , x) = 
\phi_{11}(a,x) \phi_{12}(a^\prime, a) + 
\phi_{12}(a,x) \phi_{22}(a^\prime, a)
\qquad \forall x \in ]a,b[ .
\end{equation}
By the statement {\it (b)} of Lemma \ref{L sign phi_12}, this must be negative for every $x $ sufficiently close to $b$ if $\phi_{12}(a^\prime , a ) > 0$.
Thus, $\phi_{12}(a^\prime,x)$ must have a zero in $[a, b[$.

Now, fix $b^\prime >b$.
Since $\Phi(a,b^\prime) = \Phi(b,b^\prime ) \Phi(a,b)$, $\phi_{ 12}(a,b)=0$ implies
$\phi_{12} (a, b^\prime ) = \phi_{12}(b,b^\prime) \phi_{22}(a,b)$.
By the statement {\it (b)} of Lemma \ref{L sign phi_12}, this must be negative if $\phi_{12}(b,b^\prime ) > 0$.
Hence the function $x \mapsto \phi_{12}(x,b^\prime )$ must have a zero in $]a,b]$.
\end{proof}

The Lemma \ref{L existence of positive solution} relates the sign of $\phi_{12}$ with the sign of solutions of equations of type \eqref{Eq ODE}.
To prove the Proposition \ref{P sign phi_12},
we need to consider such equations with different functions instead of $\Pi$.
That is, we consider variants of equation \eqref{Eq ODE} of the type:
\begin{equation}
\label{Eq ODE g}
r(x) v(x)-\alpha(x) v^\prime (x)-\frac{\sigma(x)^2}{2} v^{\prime \prime} (x)- g(x)=0,
\end{equation}
where $g: I \mapsto \mathbb R$ is a measurable function such that $\frac{g}{\sigma^2} $ is locally integrable in $I$ with respect to the Lebesgue measure.

\begin{lemma}
\label{L existence of positive solution}
Let $g:I \mapsto [0,+\infty[$ be a measurable function such that $\frac{g}{\sigma^2}$ is locally integrable, and $\int_a^b \frac{g(z)}{\sigma(z)^2} dz >0$.
Equation \eqref{Eq ODE g} admits a non-negative solution in the interval $[a,b] \subset I $ if and only if $\phi_{12}(a,x) >0$ for every $x \in ]a,b]$.
\end{lemma}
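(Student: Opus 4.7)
My plan is to work with the integral representation \eqref{Eq v nonhomogeneous} applied to \eqref{Eq ODE g} (with $g$ in place of $\Pi$), which writes any solution as
$v(x) = v(a)\phi_{11}(a,x) + v'(a)\phi_{12}(a,x) - \int_a^x \tfrac{2g(z)}{\sigma(z)^2}\phi_{12}(z,x)\,dz$,
and to extract all the required sign information from Lemma \ref{L sign phi_12}.

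For the ``if'' direction, suppose $\phi_{12}(a,x)>0$ on $]a,b]$ and consider $h(x) := \tfrac{1}{\phi_{12}(a,x)}\int_a^x \tfrac{2g(z)}{\sigma(z)^2}\phi_{12}(z,x)\,dz$. Using $\phi_{12}(a,a)=\phi_{12}(x,x)=0$ and $\phi_{22}(a,a)=1$, a short L'H\^opital computation gives $h(x)\to 0$ as $x\to a^+$, so $h$ extends continuously to $[a,b]$ and $d := \sup_{[a,b]}h$ is finite. Then $v_{a,d}$ from \eqref{Eq v_ad} (with $g$ replacing $\Pi$) is a solution satisfying $v_{a,d}(x) = (d-h(x))\phi_{12}(a,x) \geq 0$ on $[a,b]$, as required.

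For the ``only if'' direction, I argue by contradiction: suppose $\phi_{12}(a,x_0)=0$ for the smallest $x_0 \in ]a,b]$ with this property, and let $v$ be a non-negative solution on $[a,b]$. Lemma \ref{L sign phi_12}(b)--(c) yields $\phi_{11}(a,x_0)<0$, $\phi_{22}(a,x_0)<0$, and $\phi_{12}(z,x_0)>0$ for $z \in ]a,x_0[$. Plugging $x_0$ into the representation gives
$v(x_0) = v(a)\phi_{11}(a,x_0) - \int_a^{x_0}\tfrac{2g(z)}{\sigma(z)^2}\phi_{12}(z,x_0)\,dz$,
a sum of two non-positive terms, so $v(x_0)=0$ and both summands vanish. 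Hence $v(a)=0$ and $g=0$ a.e.\ on $[a,x_0]$, which together with the representation force $v = v'(a)\phi_{12}(a,\cdot)$ on $[a,x_0]$ with $v'(a)\geq 0$.

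The main obstacle is to convert this into a contradiction by examining $v$ just to the right of $x_0$. If $v'(a)>0$, then $v(x_0)=0$ combined with $v'(x_0) = v'(a)\phi_{22}(a,x_0)<0$ forces $v$ strictly negative just past $x_0$. If instead $v'(a)=0$, then $v\equiv 0$ on $[a,x_0]$; letting $z^* := \inf\{z\in[x_0,b] : \int_{x_0}^z g/\sigma^2 > 0\}$, the hypothesis $\int_a^b g/\sigma^2>0$ together with $g=0$ a.e.\ on $[a,x_0]$ gives $z^*<b$ and $g=0$ a.e.\ on $[a,z^*]$, so the same reasoning yields $v\equiv 0$ on $[a,z^*]$. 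Picking $x \in ]z^*,b]$ close to $z^*$ so that Lemma \ref{L sign phi_12}(a) and (c) applied at $z^*$ give $\phi_{12}(\zeta,x)>0$ for $\zeta \in ]z^*,x[$, and with $\int_{z^*}^x g/\sigma^2>0$ (which is possible by the very definition of $z^*$), one obtains $v(x) = -\int_{z^*}^x \tfrac{2g(\zeta)}{\sigma(\zeta)^2}\phi_{12}(\zeta,x)\,d\zeta < 0$, contradicting non-negativity of $v$.
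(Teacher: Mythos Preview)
Your proof is correct and follows the same route as the paper---the representation \eqref{Eq v nonhomogeneous} combined with the sign information from Lemma~\ref{L sign phi_12}---and in the ``only if'' direction you are in fact more careful than the paper, which simply asserts ``therefore $v(b)<0$'' after restricting to the first zero, glossing over the edge case $g=0$ a.e.\ on $[a,x_0]$ that you treat explicitly. One small point: in your case $v'(a)>0$ you should observe that $g=0$ a.e.\ on $[a,x_0]$ together with the hypothesis $\int_a^b g/\sigma^2>0$ forces $x_0<b$, so that ``just past $x_0$'' indeed lies inside $[a,b]$; with that remark added, the argument is complete.
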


\begin{proof}
The function
\begin{equation}
\notag
v(x) = K \phi_{12}(a,x) - \int_a^{x}\frac{2g(z)}{\sigma(z)^2}\phi_{12}(z,x)dz 
\end{equation}
is a solution of \eqref{Eq ODE g}. 
For sufficiently large $K \in ]0,+\infty[$, it is non-negative in $[a,b]$, provided $\phi_{12}$ is strictly positive in $]a,b]$.

Now, suppose that there is some $x_0 \in ]a,b]$ such that $\phi_{12}(a,x_0) \leq 0$.
Without loss of generality, we may assume that $x_0 = b = \min \{ x >a : \phi_{12}(a,x) = 0 \}$ (take a subinterval on $[a,b]$, if necessary).
Fix $v$, a solution of \eqref{Eq ODE g}.
By \eqref{Eq v nonhomogeneous},
\begin{equation}
\notag
v(b) = v(a) \phi_{11}(a,b) - \int_a^b \frac{2g(z)}{\sigma(z)^2}\phi_{12}(z,b)dz .
\end{equation}
The Lemma \ref{L sign phi_12} states that $\phi_{11}(a,b) <0$ and $\phi_{12}(z,b) >0 $ for every $z \in ]a,b[$.
Therefore, $v(b)<0$.
\end{proof}

For any $a,b \in I$, with $a<b$, we define the stopping times
\begin{align*}
\tau_{[a,b]} = & \inf \left\{ t \geq 0 : X_t \notin ]a,b[ \right\} ,
\\
\tau_a = &
\left\{ 
\begin{array}{ll}
\inf \left\{ t \geq 0 : X_t =a \right\} , & \text{if } \left\{ t \geq 0 : X_t =a \right\} \neq \emptyset ,
\\
\tau_I , & \text{if } \left\{ t \geq 0 : X_t =a \right\} = \emptyset .
\end{array}
\right.
\end{align*}
It is clear that $\tau_{[a,b]}$ and $\tau_a$ are admissible stopping times, as defined in Section \ref{S problem setting}.

The following Lemmata \ref{L integrability 1} and \ref{L integrability 2} relate the solutions of equation \eqref{Eq ODE g} with the value of a functional of type \eqref{Eq outcome}.
The results and the arguments in the proofs are similar to many classical results
(see, e.g. 
Dayanik and Karatzas \cite{dayanik2003optimal}, 
R{\"u}schendorf and Urusov \cite{ruschendorf2008class},
Belomestny, R{\"u}schendorf and Urusov \cite{belomestny2010optimal}, 
Lamberton and Zervos \cite{lamberton2013optimal}, and references therein).
However, since similar arguments are used to prove other results below, we outline the argument in the proof of Lemma \ref{L integrability 1}.

\begin{lemma}
\label{L integrability 1}
Let $g:I \mapsto [0,+\infty[$ be a measurable function such that $\frac{g}{\sigma^2}$ is locally integrable.
Let $v$ be a solution of equation \eqref{Eq ODE g}, non-negative in a compact interval $[a,b] \subset I$.
Then
\begin{equation}
\notag
\mathbb E_x \left[ \int_0^{\tau_{[a , b ]}} e^{-\rho_s} g(X_s) ds \right] \leq v(x)
\qquad \forall x \in ]a,b[.
\end{equation}
\end{lemma}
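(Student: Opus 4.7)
My plan is to apply a generalized Itô formula to the process $e^{-\rho_t}v(X_t)$ on the interval $[0,\tau_{[a,b]}\wedge t]$, use the ODE \eqref{Eq ODE g} to cancel the drift against $-g$, and then exploit non-negativity of $v$ on $[a,b]$ to drop the boundary term.

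The first step is to justify an Itô-type expansion. As observed in Section \ref{S background}, any solution $v$ of \eqref{Eq ODE g} is the difference of two convex functions with absolutely continuous derivatives. Writing $v=v_1-v_2$ with $v_1,v_2$ convex and applying the Itô-Tanaka formula to each, the second-derivative measures are absolutely continuous with densities $v_i''$, so the occupation time formula turns the local-time integral into $\tfrac{1}{2}\int_0^t v''(X_s)\sigma^2(X_s)\,ds$. Combining this with ordinary integration by parts against the absolutely continuous factor $e^{-\rho_t}$, I obtain on $\{t<\tau_I\}$
\begin{equation}
\notag
d\bigl(e^{-\rho_t}v(X_t)\bigr)=e^{-\rho_t}\!\left[-r(X_t)v(X_t)+\alpha(X_t)v'(X_t)+\tfrac{\sigma(X_t)^2}{2}v''(X_t)\right]dt+e^{-\rho_t}\sigma(X_t)v'(X_t)\,dW_t,
\end{equation}
and invoking \eqref{Eq ODE g} to replace the bracketed drift by $-g(X_t)$ yields
\begin{equation}
\notag
e^{-\rho_t}v(X_t)=v(x)-\int_0^t e^{-\rho_s}g(X_s)\,ds+\int_0^t e^{-\rho_s}\sigma(X_s)v'(X_s)\,dW_s.
\end{equation}

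Next, I evaluate this identity at the stopping time $\tau_{[a,b]}\wedge t$ and take expectation. Because $v$ and $v'$ are continuous, hence bounded on the compact interval $[a,b]$, and $\sigma$ is locally bounded below away from zero but the only integrability I need is of $\sigma v'$, which is bounded on $[a,b]$, the stochastic integral stopped at $\tau_{[a,b]}\wedge t$ is a genuine (square-integrable) martingale, so its expectation vanishes. Also $\rho_s\ge 0$ is not guaranteed (the discount rate $r$ may be negative), but up to $\tau_{[a,b]}\wedge t$ the integrand $r(X_s)$ is bounded, so $e^{-\rho_s}$ is uniformly bounded above and below on that random interval, and the expectations involved are finite. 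This gives
\begin{equation}
\notag
\mathbb{E}_x\!\left[e^{-\rho_{\tau_{[a,b]}\wedge t}}v\bigl(X_{\tau_{[a,b]}\wedge t}\bigr)\right]=v(x)-\mathbb{E}_x\!\left[\int_0^{\tau_{[a,b]}\wedge t} e^{-\rho_s}g(X_s)\,ds\right].
\end{equation}

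For $s\le\tau_{[a,b]}\wedge t$ the process $X$ stays in $[a,b]$, where $v\ge 0$ by hypothesis, so the left-hand side is non-negative, yielding
\begin{equation}
\notag
\mathbb{E}_x\!\left[\int_0^{\tau_{[a,b]}\wedge t}e^{-\rho_s}g(X_s)\,ds\right]\le v(x).
\end{equation}
Finally, since $g\ge 0$, the integrand inside the expectation is monotone increasing in $t$, so monotone convergence gives the claim upon letting $t\to+\infty$. The main technical obstacle is the very first step, namely justifying Itô's formula for a function that is only once continuously differentiable with absolutely continuous derivative; this is resolved cleanly by the difference-of-convex representation together with the occupation time formula, exactly because the second-derivative measure has no singular part.
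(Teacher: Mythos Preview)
Your approach is essentially the same as the paper's: apply the It\^o--Tanaka formula together with the occupation times formula to $e^{-\rho_t}v(X_t)$, use \eqref{Eq ODE g} to identify the drift as $-g$, drop the non-negative boundary term, and pass to the limit by monotone convergence. The difference is in how the stochastic integral is handled, and here your argument has a genuine gap.

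You assert that ``$\sigma v'$ is bounded on $[a,b]$'' and that ``$r(X_s)$ is bounded'' for $s\le\tau_{[a,b]}\wedge t$, and use these to conclude that the stopped stochastic integral is a square-integrable martingale with zero expectation. Under the paper's standing hypotheses this is not justified: Assumptions \ref{Assumption Engelbert} and \ref{Assumption r} only require local integrability of $\frac{1}{\sigma^2}$, $\frac{\alpha}{\sigma^2}$ and $\frac{r}{\sigma^2}$, which places no upper bound on $\sigma$ or $r$ over the compact interval $[a,b]$. Consequently $e^{-\rho_s}\sigma(X_s)v'(X_s)$ need not be bounded, and the stochastic integral is a priori only a local martingale. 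The paper avoids this issue by introducing a localizing sequence $\{\theta_n\}$ with $\theta_n\uparrow\tau_I$, working at the times $\tau_{[a,b]}\wedge\theta_n$, and then letting $n\to\infty$ via monotone convergence (which is legitimate since $g\ge 0$). Your proof is easily repaired in the same way: replace the deterministic truncation $\tau_{[a,b]}\wedge t$ by $\tau_{[a,b]}\wedge\theta_n$ for a localizing sequence of the local martingale $\int_0^{\cdot} e^{-\rho_s}\sigma(X_s)v'(X_s)\,dW_s$; everything else in your argument then goes through unchanged.
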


\begin{proof}
Let $\{\theta_n\}_{n\in\mathbb{N}}$ be a sequence of stopping times such that $\theta_n\to \tau_I$ and the stopped process $\{X_{t\wedge\theta_n}\}_{t\geq 0}$ is a semimartingale.
Using the It\=o-Tanaka formula and the occupation times formula (see for example theorem VI.1.5 and corollary VI.1.6 in Revuz and Yor \cite{revuz2013continuous}), we obtain
\begin{align*}
&
e^{-\rho_{\tau_{[a,b]} \wedge \theta_n}} v(X_{\tau_{[a,b]}\wedge\theta_n}) = 
\\ = &
v(x)+
\int_0^{\tau_{[a,b]}\wedge\theta_n} e^{-\rho_s}\left( -rv + \alpha v^\prime + \frac{\sigma^2}{2} v^{\prime \prime} \right) \circ X_s ds +
\int_0^{\tau_{[a,b]}\wedge\theta_n}\left( \sigma v'\right) \circ X_s dW_s =
\\ = &
v(x)-
\int_0^{\tau_{[a,b]}\wedge\theta_n} e^{-\rho_s}g ( X_s) ds +
\int_0^{\tau_{[a,b]}\wedge\theta_n}\left( \sigma v'\right) \circ X_s dW_s .
\end{align*}
Therefore,
\begin{align*}
0 \leq \mathbb E_x \left[ e^{-\rho_{\tau_{[a,b]}\wedge \theta_n}} v \left( X_{\tau_{[a,b]}\wedge \theta_n} \right) \right] =
v(x) - \mathbb E_x \left[ \int_0^{\tau_{[a,b]}\wedge\theta_n} e^{-\rho_s}g ( X_s ) ds \right] .
\end{align*}
Making $n \to \infty$, the Lemma follows from the Lebesgue monotone convergence theorem.
\end{proof}

\begin{lemma}
\label{L integrability 2}
Fix a compact interval $[a,b] \subset I$ such that $\phi_{1,2}(a,x) >0$ for every $x \in ]a,b]$, and let $g: I \mapsto \mathbb R$ be a measurable function such that $\frac{g}{\sigma^2}$ is locally integrable with respect to the Lebesgue measure. 
\\
If $v$ is the unique solution of equation \eqref{Eq ODE g} with boundary conditions $v(a)=v(b)=0$, then
\begin{equation}
\notag
\mathbb E_x \left[ \int_0^{\tau_{[a , b ]}} e^{-\rho_s} g(X_s) ds \right] =v(x) 
\qquad
\forall x \in ]a,b[.
\end{equation}
\end{lemma}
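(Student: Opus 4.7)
The plan is to mirror the Itô--Tanaka argument used in Lemma \ref{L integrability 1}, now exploiting two features that make the inequality there tight: \eqref{Eq ODE g} holds with equality rather than only providing a sign, and the boundary conditions $v(a)=v(b)=0$ eliminate the terminal term in the limit.

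Concretely, I would fix a localizing sequence $\{\theta_n\}$ with $\theta_n\to\tau_I$ such that $\{X_{t\wedge\theta_n}\}_{t\geq 0}$ is a semimartingale, and apply the Itô--Tanaka and occupation times formulas to $e^{-\rho_t}v(X_t)$ on $[0,\tau_{[a,b]}\wedge\theta_n]$. Since $v$ solves \eqref{Eq ODE g} exactly, this produces
\begin{equation*}
e^{-\rho_{\tau_{[a,b]}\wedge\theta_n}}\,v\!\left(X_{\tau_{[a,b]}\wedge\theta_n}\right) = v(x) - \int_0^{\tau_{[a,b]}\wedge\theta_n} e^{-\rho_s}g(X_s)\,ds + \int_0^{\tau_{[a,b]}\wedge\theta_n}(\sigma v')(X_s)\,dW_s.
\end{equation*}
Because $v'$ is bounded on the compact set $[a,b]$ and $X$ remains in $[a,b]$ up to $\tau_{[a,b]}$, the stopped stochastic integral is a true martingale, so taking expectations eliminates it.

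Next, I would control integrability by splitting $g = g^+ - g^-$: the hypothesis $\phi_{12}(a,\cdot)>0$ on $]a,b]$ allows Lemma \ref{L existence of positive solution} to provide non-negative solutions $u^{\pm}$ of \eqref{Eq ODE g} with sources $g^{\pm}$ on $[a,b]$. Lemma \ref{L integrability 1} then gives the finite bounds
\begin{equation*}
\mathbb E_x\!\left[\int_0^{\tau_{[a,b]}} e^{-\rho_s}g^{\pm}(X_s)\,ds\right] \leq u^{\pm}(x) < +\infty,
\end{equation*}
so $\int_0^{\tau_{[a,b]}}e^{-\rho_s}|g(X_s)|\,ds$ is integrable and dominated convergence delivers $\mathbb E_x[\int_0^{\tau_{[a,b]}\wedge\theta_n}e^{-\rho_s}g(X_s)\,ds]\to\mathbb E_x[\int_0^{\tau_{[a,b]}}e^{-\rho_s}g(X_s)\,ds]$ as $n\to\infty$.

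The main obstacle will be proving that $\mathbb E_x[e^{-\rho_{\tau_{[a,b]}\wedge\theta_n}}v(X_{\tau_{[a,b]}\wedge\theta_n})]\to 0$. Under Assumption \ref{Assumption Engelbert} the non-degeneracy of $\sigma$ and the integrability of $\sigma^{-2}$ on the compact subinterval $[a,b]\subset I$ force $\tau_{[a,b]}$ to be almost surely finite and strictly smaller than $\tau_I$, so $X_{\tau_{[a,b]}}\in\{a,b\}$ and the boundary conditions yield the pointwise limit $v(X_{\tau_{[a,b]}})=0$. Upgrading this to convergence in $L^1$ when $r$ may be negative (so $e^{-\rho_t}$ is not obviously bounded) is the delicate step; I would handle it by applying the identity of the first paragraph to the source $|g|$ together with a suitable non-negative majorant built from $u^+ + u^-$, thereby producing a uniform $L^1$ envelope for $e^{-\rho_{\tau_{[a,b]}\wedge\theta_n}}v(X_{\tau_{[a,b]}\wedge\theta_n})$ independent of $n$. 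Once this dominated convergence applies, substituting back into the identity of the first paragraph yields $v(x)=\mathbb E_x[\int_0^{\tau_{[a,b]}}e^{-\rho_s}g(X_s)\,ds]$, as required.
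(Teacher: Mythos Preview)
Your overall strategy matches the paper's: the same It\^o--Tanaka identity, the same recognition that the boundary conditions $v(a)=v(b)=0$ kill the terminal term in the limit, and the same use of Lemma~\ref{L existence of positive solution} together with Lemma~\ref{L integrability 1} to control the $g$-integral. Your treatment of the right-hand side (splitting $g=g^+-g^-$ and invoking Lemma~\ref{L integrability 1} for each part directly) is in fact slightly more streamlined than the paper's two-stage argument, which first proves the lemma for $g\ge 0$ and then bootstraps via $|g|$.

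The gap is in the terminal term $\mathbb E_x\bigl[e^{-\rho_{\tau_{[a,b]}\wedge\theta_n}}v(X_{\tau_{[a,b]}\wedge\theta_n})\bigr]$. You correctly isolate the difficulty (when $r$ may be negative, $e^{-\rho_t}$ is unbounded), but your proposed fix---building an $L^1$ envelope from the identity applied to the source $|g|$ and the majorant $u^++u^-$---does not work as stated. Applying the identity to $u^++u^-$ only controls $e^{-\rho_t}(u^++u^-)(X_t)$ \emph{in expectation}, and since the non-negative solutions furnished by Lemma~\ref{L existence of positive solution} vanish at $a$, the function $u^++u^-$ is not bounded away from zero on $[a,b]$; there is therefore no way to extract from this a \emph{pathwise} dominating random variable for $e^{-\rho_t}|v(X_t)|$. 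A uniform bound on expectations is not enough for dominated convergence, and you have not argued uniform integrability either.

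The paper's resolution uses a different source altogether. From $e^{-\rho_\theta}=1-\int_0^\theta e^{-\rho_s}r(X_s)\,ds$ one obtains the pathwise estimate
\[
0\le e^{-\rho_\theta}\le 1+\int_0^{\tau_{[a,b]}}e^{-\rho_s}r^-(X_s)\,ds
\qquad\text{for every }\theta\le\tau_{[a,b]},
\]
and the right-hand side has finite expectation by Lemmas~\ref{L existence of positive solution} and~\ref{L integrability 1} applied to the non-negative source $r^-$ (which is admissible by Assumption~\ref{Assumption r}). Since $v$ is bounded on the compact interval $[a,b]$ and $v(X_{\tau_{[a,b]}})=0$, this furnishes a single integrable majorant for all $n$, and dominated convergence finishes the argument. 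The missing idea in your proposal is that the dominating function must be built from $r^-$, which is what actually governs the growth of $e^{-\rho_t}$, rather than from $|g|$.
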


\begin{proof}
Fix $[a,b]$ as above.
By the argument used in the proof of Lemma \ref{L integrability 1}, there is a sequence of stopping times $\{\theta_n\}_{n\in\mathbb{N}}$ such that $\theta_n\to \tau_I$ and
\begin{align}
\label{Z01}
\mathbb E_x \left[ e^{-\rho_{\tau_{[a,b]} \wedge \theta_n}} v(X_{\tau_{[a,b]}\wedge\theta_n}) \right] = &
v(x) -
\mathbb E_x \left[ \int_0^{\tau_{[a,b]}\wedge\theta_n} e^{-\rho_s}g ( X_s ) ds \right].
\end{align}
For every stopping time $\theta \leq \tau_{[a,b]}$, we have
\begin{align*}
0 \leq &
e^{- \rho_\theta} = 
1 + \int_0^\theta - e^{-\rho_s} r(X_s) ds =
1 + \int_0^\theta e^{-\rho_s} \left( r^-(X_s) - r^+(X_s) \right) ds \leq
1 + \int_0^{\tau_{[a,b]}} e^{-\rho_s} r^-(X_s) ds .
\end{align*}
Using the Lemmata \ref{L existence of positive solution} and \ref{L integrability 1}, we see that $\mathbb E_x \left[\int_0^{\tau_{[a,b]}} e^{-\rho_s} r^-(X_s) ds \right] < +\infty$.
Since $v$ is bounded in $[a,b]$ and $v(X_{\tau_{[a,b]}})=0$, the Lebesgue dominated convergence theorem states that
\begin{equation}
\notag
\lim_{n \to \infty} \mathbb E_x \left[ e^{-\rho_{\tau_{[a,b]} \wedge \theta_n}} v(X_{\tau_{[a,b]}\wedge\theta_n}) \right] = 0.
\end{equation}
Using the Lebesgue dominated convergence theorem on the right-hand side of \eqref{Z01}, we obtain the Lemma in the case $g \geq 0$. In the general case $g:[a,b] \mapsto \mathbb R$, the Lemma holds for the positive function $|g|$.
Hence, we can apply the Lebesgue dominated convergence theorem to both sides of \eqref{Z01} to finish the proof.
\end{proof}

The following Lemma, together with the preceding ones, allows us to obtain Lemma \ref{L nonintegrability}, from which the Proposition \ref{P sign phi_12} follows.

\begin{lemma}
\label{L probability of right exit}
For every $m < a <b < M$ and every $x \in ]a,b[$:
\begin{equation}
\notag
P_x \left\{ \tau_b < \tau_a \right\} = \frac{\int_a^x e^{-\int_a^{z_1} \frac{2 \alpha}{\sigma^2} dz_2} dz_1}{\int_a^b e^{-\int_a^{z_1} \frac{2 \alpha}{\sigma^2} dz_2} dz_1} .
\end{equation}
In particular, $0 < P_x \left\{ \tau_b < \tau_a \right\} < 1$ for every $x \in ]a,b[$.
\end{lemma}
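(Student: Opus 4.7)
The natural strategy is the classical scale function argument. I would first introduce
$$S(x) := \int_a^x \exp\!\left( - \int_a^{z_1} \frac{2\alpha(z_2)}{\sigma(z_2)^2} dz_2 \right) dz_1, \qquad x \in I.$$
By Assumption \ref{Assumption Engelbert}, the inner integral is finite for every $z_1 \in I$, so $S$ is continuous, strictly increasing, with $S(a)=0$ and $S(b)$ equal to the denominator in the claimed formula. Moreover $S$ is $C^1$ and $S'$ is locally absolutely continuous; a direct computation shows $\alpha(x) S'(x) + \tfrac12 \sigma(x)^2 S''(x) = 0$ for Lebesgue-almost every $x \in I$, so $S$ is a Carathéodory solution of the homogeneous version (with $r \equiv \Pi \equiv 0$) of equation \eqref{Eq ODE}.

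Next I would transplant the It\^o-Tanaka / occupation times computation used in the proof of Lemma \ref{L integrability 1} (with $r \equiv \Pi \equiv 0$, so no discount factor appears) to the function $S$. This yields, for some localizing sequence $\theta_n \to \tau_I$,
$$S(X_{t \wedge \tau_{[a,b]} \wedge \theta_n}) - S(x) = \int_0^{t \wedge \tau_{[a,b]} \wedge \theta_n} (\sigma S')(X_s) dW_s,$$
i.e.\ a continuous local martingale. Since the left-hand side is bounded in $[-S(b), S(b)]$ (as $0 \leq S \leq S(b)$ on $[a,b]$), it is in fact a bounded martingale. Optional sampling together with bounded convergence (first $n \to \infty$, then $t \to \infty$) gives
$$S(x) = \mathbb E_x \left[ S(X_{\tau_{[a,b]}}) \chi_{\tau_{[a,b]} < \infty} \right] + \mathbb E_x \left[ Y_\infty \chi_{\tau_{[a,b]} = \infty} \right],$$
where $Y_\infty \in [0, S(b)]$ denotes the almost-sure limit of the bounded martingale on the event $\{\tau_{[a,b]} = \infty\}$.

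The main obstacle is to rule out $\{\tau_{[a,b]} = \infty\}$. On this event, martingale convergence forces $S(X_t) \to Y_\infty$ almost surely, and since $S:[a,b] \to [0,S(b)]$ is a homeomorphism, $X_t$ converges a.s.\ to some $X_\infty \in [a,b]$. Applying Dambis-Dubins-Schwarz to the bounded martingale $S(X_{\cdot \wedge \tau_{[a,b]}}) - S(x)$ represents it as $\tilde W_{\langle S(X)\rangle_\cdot}$ with quadratic variation $\langle S(X) \rangle_t = \int_0^{t \wedge \tau_{[a,b]}} (\sigma S')^2(X_s) ds$. Since Brownian motion does not converge as its time parameter tends to infinity, convergence of the martingale forces $\int_0^\infty (\sigma S')^2(X_s) ds < \infty$ on $\{\tau_{[a,b]} = \infty\}$. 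Combining this with the occupation-times formula (analogous to its use in the proof of Lemma \ref{L integrability 1}) and the fact that $\sigma^2, S'$ are strictly positive with $1/\sigma^2$ locally integrable, one obtains a contradiction, since $X_s$ spends infinite time in arbitrarily small neighbourhoods of $X_\infty$ on which $(\sigma S')^2$ is bounded away from zero in $L^1$-average sense. This is the step I expect to require the most care.

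Once $P_x\{\tau_{[a,b]} < \infty\}=1$ is established, $X_{\tau_{[a,b]}} \in \{a,b\}$ a.s.\ and hence
$$S(x) = P_x\{\tau_b < \tau_a\} S(b) + P_x\{\tau_a < \tau_b\} \cdot 0 ,$$
so dividing by $S(b)>0$ yields the claimed formula. The strict inequalities $0 < P_x\{\tau_b < \tau_a\} < 1$ follow immediately from $0 = S(a) < S(x) < S(b)$ and the strict monotonicity of $S$.
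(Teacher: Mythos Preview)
Your approach is essentially that of the paper: both define the scale function as the solution of $\alpha v' + \tfrac{\sigma^2}{2}v''=0$ with $v(a)=0$, $v(b)=1$, invoke the It\^o--Tanaka argument of Lemma~\ref{L integrability 1} to see that $v(X_{\cdot\wedge\tau_{[a,b]}})$ is a bounded martingale, and conclude by dominated convergence. The paper does not discuss the event $\{\tau_{[a,b]}=\infty\}$ at all and simply passes to the limit, so your explicit treatment of this point (via DDS and finiteness of the quadratic variation) is an addition rather than a deviation from the paper's argument.
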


\begin{proof}
It can be checked that the unique solution of the boundary problem
\[
\alpha v^\prime + \frac{\sigma^2}{2}v^{\prime \prime} =0,
\qquad v(a) = 0, \quad v(b)=1 .
\]
is the function
\[
v(x) = \frac{\int_a^x e^{-\int_a^{z_1} \frac{2 \alpha}{\sigma^2} dz_2} dz_1}{\int_a^b e^{-\int_a^{z_1} \frac{2 \alpha}{\sigma^2} dz_2} dz_1} .
\]
Let $\tau_{[a,b]} = \tau_a \wedge \tau_b = \inf \left\{ t \geq 0: X_t \notin ]a,b[ \right\}$, and let $\{\theta_n\}_{n\in\mathbb{N}}$ be a sequence of stopping times such that $\theta_n\to \tau_I$ and the stopped process $\{X_{t\wedge\theta_n}\}_{t\geq 0}$ is a semimartingale.
By the argument used in the proof of Lemma \ref{L integrability 1},
\begin{align*}
v(X_{\tau_{[a,b]}\wedge\theta_n}) = &
v(x)+
\int_0^{\tau_{[a,b]}\wedge\theta_n}\left( \sigma v'\right) \circ X_s dW_s
\end{align*}
Therefore,
$\mathbb E_x \left[ v \left( X_{\tau_{[a,b]}\wedge \theta_n} \right) \right] =
v(x) $ for every $x \in ]a,b[$.
\\
Since $\tau_{[a,b]}\wedge \theta_n $ converges to $\tau_{[a,b]}$ and $v$ is bounded, the Lebesgue dominated convergence theorem states that
\begin{align*}
&
P_x \left\{ \tau_b < \tau_a \right\} =
\mathbb E_x \left[ v ( X_{\tau_{[a,b]}} ) \right] =
v(x) .
\end{align*}
Due to Assumption \ref{Assumption Engelbert}, for every $x \in ]a,b[$,
\[
0 < 
\int_a^x e^{-\int_a^{z_1} \frac{2 \alpha}{\sigma^2} dz_2} dz_1 <
\int_a^b e^{-\int_a^{z_1} \frac{2 \alpha}{\sigma^2} dz_2} dz_1 <
+\infty .
\]
Therefore, $0 < v(x) < 1$ for every $x \in ]a,b[$.
\end{proof}

The following Lemma concludes the proof of Proposition \ref{P sign phi_12}

\begin{lemma}
\label{L nonintegrability}
Fix $a,b \in I $ such that $a<b$ and $\phi_{12}(a,b) = 0$, then
\begin{equation}
\notag
\mathbb E_x \left[ \int_0^{\tau_{[a^\prime , b^\prime]}} e^{-\rho_s} g(X_s) ds \right] = + \infty
\end{equation}
for every $a^\prime \in ]m,a]$, $b^\prime \in [b,M[$, $x \in ]a^\prime , b^\prime[$, and every measurable function $g \geq 0$, such that $\left\{ x \in [a^\prime,b^\prime ]: g(x)>0 \right\}$ has positive Lebesgue measure.
\end{lemma}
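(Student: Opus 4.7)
The plan is to establish the result first in a core case and then bootstrap. Without loss of generality I assume $b$ is the first zero of $\phi_{12}(a,\cdot)$ after $a$: if not, I replace $b$ by $b_0 := \min\{y > a : \phi_{12}(a,y) = 0\} \in \ ]a,b]$, which still satisfies the hypothesis and lies inside $[a', b']$. Then $\phi_{12}(a, \cdot) > 0$ on $]a, b[$ and Lemma \ref{L sign phi_12}(c) yields $\phi_{12}(\cdot, b) > 0$ on $]a, b[$ as well. The core case will treat $g$ with positive-measure support in $]a, b[$ and starting point $x \in \ ]a, b[$.

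For the core case, I would pick any sequence $b_n \uparrow b$ with $b_n \in \ ]a, b[$. Since $\phi_{12}(a, \cdot) > 0$ on $]a, b_n]$, Lemma \ref{L integrability 2} combined with formula \eqref{Eq vab} (with $g$ in place of $\Pi$) gives
\[
v_n(x) := \mathbb{E}_x\!\left[\int_0^{\tau_{[a,b_n]}} e^{-\rho_s} g(X_s)\, ds\right] = \frac{\int_a^{b_n} \frac{2 g(z)}{\sigma(z)^2} \phi_{12}(z, b_n)\, dz}{\phi_{12}(a, b_n)}\,\phi_{12}(a, x) - \int_a^x \frac{2 g(z)}{\sigma(z)^2} \phi_{12}(z, x)\, dz
\]
for $x \in \ ]a, b_n[$. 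Since $\phi_{12}(z, b_n) > 0$ on $]a, b_n[$ (again by Lemma \ref{L sign phi_12}(c)), the integrands are non-negative, and joint continuity of $\Phi$ together with Fatou's lemma give
\[
\liminf_{n \to \infty} \int_a^{b_n} \tfrac{2 g(z)}{\sigma(z)^2} \phi_{12}(z, b_n)\, dz \geq \int_a^{b} \tfrac{2 g(z)}{\sigma(z)^2} \phi_{12}(z, b)\, dz > 0,
\]
the strict positivity coming from $\phi_{12}(\cdot, b)>0$ on $]a,b[$ and from the assumption that $g>0$ on a positive-measure subset there. Since $\phi_{12}(a, b_n) \to 0$ from above and $\phi_{12}(a, x) > 0$, this forces $v_n(x) \to +\infty$. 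Continuity of $X$ yields $\tau_{[a,b_n]} \uparrow \tau_{[a,b]}$ almost surely, so monotone convergence gives $\mathbb{E}_x[\int_0^{\tau_{[a,b]}} e^{-\rho_s} g(X_s)\, ds] = +\infty$; the bound $\tau_{[a',b']} \geq \tau_{[a,b]}$ for starts in $]a,b[$ then propagates this to the larger stopping time.

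For $x \in \ ]a', b'[ \setminus \ ]a, b[$, I would fix any $y \in \ ]a, b[$; Lemma \ref{L probability of right exit}, applied to a subinterval of $]a', b'[$ separating $x$ from $y$, produces $P_x\{\tau_y < \tau_{[a', b']}\} > 0$, and the strong Markov property at $\tau_y$ combined with the core case delivers $+\infty$ for the full integral. Finally, if $g$ vanishes almost everywhere on $[a, b]$ but has positive-measure support in, say, $[a', a[$ (the symmetric case is analogous), Lemma \ref{L roots phi_12} ensures $\phi_{12}(a', \cdot)$ has a zero in $[a, b[$, and the first-zero map $\alpha \mapsto \beta(\alpha) := \min\{y > \alpha : \phi_{12}(\alpha, y) = 0\}$ depends continuously on $\alpha$ with $\beta(a) = b$; choosing a Lebesgue density point $y_0 \in [a', a[$ of $\{g > 0\}$ and $\alpha \in \ ]a', y_0[$ close enough to $y_0$ will secure $y_0 \in \ ]\alpha, \beta(\alpha)[\ \subseteq\ ]a', b'[$ and reduce matters to the core case for the pair $(\alpha, \beta(\alpha))$. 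I expect the main technical obstacle to be precisely this continuity/selection step for the first-zero map; the other ingredients are mechanical applications of the previous lemmata.
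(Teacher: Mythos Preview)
Your proof follows essentially the same route as the paper's: reduce to the first zero of $\phi_{12}(a,\cdot)$, apply Lemma~\ref{L integrability 2} on $[a,b_n]$ with $b_n\uparrow b$ (the paper writes $b-\varepsilon$), exploit $\phi_{12}(a,b_n)\to 0^+$ together with the strict positivity of the numerator to force $v_n(x)\to+\infty$, and then propagate to arbitrary $x\in\,]a',b'[$ via the strong Markov property and Lemma~\ref{L probability of right exit}. The paper disposes of the case where $g$ vanishes a.e.\ on $[a,b]$ with the terse instruction ``shift the interval, if necessary'', citing Lemma~\ref{L roots phi_12}.

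Your continuity argument for the first-zero map $\alpha\mapsto\beta(\alpha)$ is an unnecessary detour, and, as you yourself flag, not entirely trivial to justify. You can bypass it completely by taking $\alpha=y_0$, your Lebesgue density point of $\{g>0\}$ in $]a',a[$, rather than choosing $\alpha$ strictly below $y_0$. Since $y_0<a$, Lemma~\ref{L roots phi_12} (applied to the pair $(a,b)$) guarantees that $\phi_{12}(y_0,\cdot)$ has a zero in $]y_0,b[\subset\,]y_0,b']$; let $\beta(y_0)$ be the first one. Because $y_0$ is a density point, $\{g>0\}$ has positive measure in every one-sided neighbourhood $]y_0,y_0+\varepsilon[$, and for $\varepsilon$ small this lies inside $]y_0,\beta(y_0)[$. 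Hence the core case applies directly to the pair $(y_0,\beta(y_0))$, with no continuity or selection argument needed. This is precisely the ``shift'' the paper has in mind, made explicit.
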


\begin{proof}
Fix $[a,b]$ as above.
Without loss of generality, we may assume that $\phi_{12}(a,x) >0$ for every $x \in ]a,b[$ (take a subinterval if necessary).
Fix $a^\prime \in ]m,a]$, $b^\prime \in [b,M[$, and a measurable function $g \geq 0$ such that $\left\{ x \in [a^\prime,b^\prime ]: g(x)>0 \right\}$ has positive Lebesgue measure.
Due to Lemma \ref{L roots phi_12}, we may assume that $\left\{ x \in [a ,b ]: g(x)>0 \right\}$ has positive Lebesgue measure (shift the interval, if necessary).
\\
For every constant $\varepsilon \in ]0,b-a[$, we have $\phi_{12}(a,x) > 0 $ for every $x \in ]a,b-\varepsilon[$.
By equality \eqref{Eq v nonhomogeneous}, 
\begin{align*}
v_\varepsilon (x) = \frac{\int_a^{b-\varepsilon} \frac{2g(z)}{\sigma(z)^2} \phi_{12}(z,b-\varepsilon) dz}{\phi_{12}(a,b-\varepsilon)} \phi_{12}(a,x) -
\int_a^{x} \frac{2g(z)}{\sigma(z)^2} \phi_{12}(z,x) dz 
\end{align*}
is the unique solution of \eqref{Eq ODE g} with boundary conditions $v(a) = v(b-\varepsilon ) = 0$.
By the Lemma \ref{L integrability 2}, for every $x \in ]a,b[$, we have
\begin{align*}
\mathbb E_x \left[ 
\int_0^{\tau_{[a^\prime, b^\prime]}}
e^{-\rho_s}g(X_s)ds
\right] \geq
\mathbb E_x \left[ 
\int_0^{\tau_{[a, b- \varepsilon ]}}
e^{-\rho_s}g(X_s)ds
\right] =
v_\varepsilon (x) ,
\end{align*}
for every $\varepsilon >0$. 
Since $\lim\limits_{\varepsilon \to 0^+} v_\varepsilon (x) = + \infty$ for every $x \in ]a,b[$, this implies $\mathbb E_x \left[ 
\int_0^{\tau_{[a^\prime, b^\prime]}}
e^{-\rho_s}g(X_s)ds
\right] = + \infty$ for every $x \in ]a,b[$.

Now, fix $c \in ]a,b[$ and $x \in ]a^\prime, b^\prime [ \setminus ]a,b[$.
Assume that $x \in ]a^\prime , c[ $ (the case $x \in ]c,b^\prime[$ is analogous).
Then,
\begin{align*}
\mathbb E_x \left[ 
\int_0^{\tau_{[a^\prime, b^\prime]}}
e^{-\rho_s}g(X_s)ds
\right] \geq & 
\mathbb E_x \left[ 
\int_{\tau_c}^{\tau_{[a^\prime, b^\prime]}}
e^{-\rho_s}g(X_s)ds \chi_{\{ \tau_c < \tau_{a^\prime} \}}
\right] 
\\ = &
\mathbb E_x \left[ 
\int_{\tau_c}^{\tau_{[a^\prime, b^\prime]}}
e^{-(\rho_s-\rho_{\tau_c})}g(X_s)ds \,\, e^{-\rho_{\tau_c}}\chi_{\{ \tau_c < \tau_{a^\prime} \}}
\right] 
\\= &
\mathbb E_c \left[ 
\int_0^{\tau_{[a^\prime, b^\prime]}}
e^{-\rho_s }g(X_s)ds 
\right]
\mathbb E_x \left[ 
 e^{-\rho_{\tau_c}}\chi_{\{ \tau_c < \tau_{a^\prime} \}}
\right].
\end{align*}
By the Lemma \ref{L probability of right exit}, $\mathbb E_x \left[ 
 e^{-\rho_{\tau_c}}\chi_{\{ \tau_c < \tau_{a^\prime} \}}
\right] >0$ 
and therefore the right-hand side of the inequality above is equal to $+\infty$.
\end{proof}

Concerning the proof of Proposition \ref{P assumption Pi vs Pia}, notice that the final argument in the proof of Lemma \ref{L nonintegrability} shows that existence of some $x \in I$ such that $\mathbb E_x \left[ \int_0^{\tau_I} e^{-\rho_t} \Pi^+(X_t ) dt \right] = \infty$ implies that 
\linebreak
$\mathbb E_x \left[ \int_0^{\tau_I} e^{-\rho_t} \Pi^+(X_t ) dt \right] = \infty$ for every $x \in I$.

\subsection{Proof of Proposition \ref{P maximal interval}}\label{SS P maximal interval}

The following Lemma is an easy consequence of Proposition \ref{P sign phi_12}.

\begin{lemma}
\label{L local positive curve}
For any point $x_0 \in I$ such that $v_{ x_0 ,0}(x) <0$ for some $x \in I$, there is a compact interval $[a,b] \subset I$ satisfying \eqref{Eq local positive curve} such that $x_0 \in ]a,b[$.
Conversely, if $[a,b] \subset I$ satisfies \eqref{Eq local positive curve} and there is some $x \in I$ such that $v^{[a,b]}(x) <0$, then there is a compact interval $[a^\prime, b^\prime] \subset I$ satisfying \eqref{Eq local positive curve} such that $[a,b] \subset ]a^\prime , b^\prime[ $.
\end{lemma}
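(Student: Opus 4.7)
My plan is to prove both parts by a shooting argument within the one-parameter family of solutions of \eqref{Eq ODE} vanishing at a prescribed point $a$, namely $v_{a,d}(x) = d\phi_{12}(a,x) + v_{a,0}(x)$. The argument relies on continuity of $(a,d)\mapsto v_{a,d}$ in its parameters and on the strict positivity of $\phi_{12}$ off the diagonal granted by Proposition \ref{P sign phi_12}.

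For Part 1, I assume without loss of generality that $v_{x_0,0}(x^*)<0$ for some $x^*>x_0$ (the opposite case is symmetric). For $a<x_0$ close to $x_0$, continuity of \eqref{Eq v_ad} in $a$ yields $v_{a,0}(x^*)<0$, so I may set $d^*(a) = -v_{a,0}(x^*)/\phi_{12}(a,x^*)>0$ and $u_a = v_{a,d^*(a)}$; this is a solution of \eqref{Eq ODE} with $u_a(a) = u_a(x^*) = 0$ and $u_a'(a) = d^*(a)>0$. Let $b(a) = \min\{x>a:u_a(x)=0\} \in \,]a,x^*]$: then $u_a>0$ on $]a,b(a)[$, and Lemma \ref{L no double cross} identifies $u_a$ with $v^{[a,b(a)]}$, so $[a,b(a)]$ satisfies \eqref{Eq local positive curve}. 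The remaining task is to show $b(a)>x_0$, i.e.\ $u_a(x_0)>0$, for $a$ close enough to $x_0$: differentiating the identity $\Phi(z,x_0)\Phi(x_0,z) = Id$ at $z=x_0$ gives $\phi_{12}(a,x_0) = (x_0-a)(1+o(1))$ as $a\to x_0$; the bound $|v_{a,0}(x_0)| \leq \int_a^{x_0}\frac{2|\Pi(z)|}{\sigma(z)^2}|\phi_{12}(z,x_0)|\,dz = o(x_0-a)$ follows from absolute continuity of the indefinite integral of $|\Pi|/\sigma^2$ (locally integrable by Assumption \ref{Assumption Pi}); and $d^*(a) \to -v_{x_0,0}(x^*)/\phi_{12}(x_0,x^*)>0$. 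Combining, $u_a(x_0) = d^*(a)(x_0-a)(1+o(1))>0$ for $a$ sufficiently close to $x_0$.

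For Part 2, I write $v^{[a_0,b_0]} = v_{a_0,d_0}$ with $d_0 = (v^{[a_0,b_0]})'(a_0)>0$, and assume without loss of generality that $x^*>b_0$ and $v^{[a_0,b_0]}(x^*)<0$. Then $v_{a_0,0}(x^*) = v^{[a_0,b_0]}(x^*) - d_0\phi_{12}(a_0,x^*)<0$, and the same construction gives $d^*(a_0) = d_0 - v^{[a_0,b_0]}(x^*)/\phi_{12}(a_0,x^*)>d_0$ together with $u_{a_0}(b_0) = (d^*(a_0)-d_0)\phi_{12}(a_0,b_0)>0$, so the first zero $b(a_0)$ of $u_{a_0}$ satisfies $b(a_0)>b_0$. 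By continuity, $b(a)>b_0$ also for $a<a_0$ close enough to $a_0$, and then $[a_0,b_0]\subset\,]a,b(a)[$ with $[a,b(a)]$ satisfying \eqref{Eq local positive curve}. The main technical obstacle in both parts is the asymptotic $|v_{a,0}(x_0)| = o(x_0-a)$: since $\alpha,r,\Pi,\sigma$ need only be measurable, this cannot be obtained by a Taylor expansion of $v_{a,0}$, and must instead be extracted from absolute continuity of the indefinite integral of $|\Pi|/\sigma^2$ together with the linear vanishing of $\phi_{12}$ at the diagonal.
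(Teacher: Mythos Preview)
Your approach and the paper's are both shooting arguments in the family $v_{a,d}$, but they vary different parameters. You fix the right zero at $x^*$ and slide the base point $a$ (so $u_a=v^{[a,x^*]}$), which forces you to control the behaviour of $u_a$ near the moving endpoint via the asymptotic $|v_{a,0}(x_0)|=o(x_0-a)$. The paper instead keeps the base point fixed and perturbs the slope in two steps: first shoot from $x_0$ with a small $d>0$ to locate a right zero $b\in\,]x_0,x_1[$, then shoot back from $b$ with a slightly smaller slope $d_1<v_{x_0,d}'(b)$ to locate a left zero $a\in\,]x_0-\varepsilon,x_0[$. This two-step perturbation uses only the monotonicity $d\mapsto v_{a,d}(x)$ (immediate from $\phi_{12}>0$) and the comparison $v_{b,d_1}>v_{x_0,d}$ on $]-\infty,b[$; no asymptotic at the base point is needed.

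Your argument has a genuine (though repairable) gap: the implication ``$b(a)>x_0$, i.e.\ $u_a(x_0)>0$'' is not correct. Positivity of $u_a$ at the single point $x_0$ does not rule out a zero of $u_a$ in $]a,x_0[$, and such a zero would give $b(a)<x_0$. What you need is $u_a>0$ on the whole interval $]a,x_0]$. Your ingredients do deliver this once applied uniformly: on a fixed neighbourhood of $x_0$ one has $\phi_{12}(a,x)\geq c(x-a)$ and $|\phi_{12}(z,x)|\leq C(x-z)$ (by continuity of $\phi_{22}$), hence
\[
u_a(x)=d^*(a)\phi_{12}(a,x)+v_{a,0}(x)\ \geq\ (x-a)\Bigl[c\,d^*(a)-C\!\int_a^{x_0}\tfrac{2|\Pi|}{\sigma^2}\Bigr]>0
\qquad\text{for all }x\in\,]a,x_0],
\]
when $a$ is close enough to $x_0$. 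The same issue recurs in Part~2: ``by continuity, $b(a)>b_0$'' hides the requirement $u_a>0$ on $]a,b_0]$; positivity on $]a_0,b_0]$ follows from the comparison $u_a>u_{a_0}$ (both vanish at $x^*$, and $u_{a_0}(a)<0$ for $a<a_0$ close to $a_0$), but positivity on $]a,a_0]$ again needs the uniform estimate above. A minor side remark: one only has $(v^{[a_0,b_0]})'(a_0)\geq 0$, not necessarily $>0$, but this does not affect your argument since you only use $d^*(a_0)>d_0$. With these fixes your route is correct, though more laborious than the paper's, which bypasses the base-point analysis entirely.
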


\begin{proof}
Due to Proposition \ref{P sign phi_12}, equality \eqref{Eq v_ad} implies that the mapping $d \mapsto v_{x_0,d}(x_1)$ is strictly increasing for fixed $x_0<x_1$, and strictly decreasing for fixed $x_1 < x_0$.

Fix $x_0, x_1 \in I$ such that $v_{ x_0 ,0}(x_1) <0$, with $x_0 < x_1$ (the case $x_1 < x_0$ is analogous).
Fix $d>0$ sufficiently small such that $v_{x_0,d}(x_1) <0$.
Since $d>0$, there is some $\varepsilon>0$ such that $v_{x_0,d}(x) >0$ for every $x \in ]x_0, x_0+ \varepsilon]$ and $v_{x_0,d}(x) <0$ for every $x \in [x_0- \varepsilon, x_0[$.
Set $b= \min \left\{ x >x_0 : v_{x_0,d}(x) \leq 0 \right\}$.
It is clear that $b \in ] x_0, x_1[$.
Then, there is some $d_1 < v^\prime_{x_0,d}(b)$, such that $v_{b,d_1}(x_0- \varepsilon) <0$.
Let $a = \max \left\{ x \leq x_0 : v_{b,d_1}(x) \leq 0 \right\}$. 
Since $v_{b,d_1}(x) > v_{x_0,d}(x) $ for every $x < b$, it follows that $a \in ]x_0- \varepsilon, x_0[$.
Thus, $x_0 \in ]a,b[$ and $]a,b[$ satisfies \eqref{Eq local positive curve}.

If there is some $x \in ]b,M[$ such that $v^{[a,b]}(x)<0$, then, we can use the argument above taking $v_{a,d}$ with $d > \left( v^{[a,b]} \right)^\prime (a) $.
If there is some $x \in ]m,a[$ such that $v^{[a,b]}(x)<0$, then, we can take $v_{b,d}$ with $d < \left( v^{[a,b]} \right)^\prime (b) $.
\end{proof}

The argument used to prove the Lemma \ref{L local positive curve} can be adapted to prove the following Lemma.

\begin{lemma}
\label{L fusion of intervals}
For any compact intervals $[a,b], \ [a^\prime, b^\prime] \subset I $ satisfying condition \eqref{Eq local positive curve}, such that $a < a^\prime < b < b^\prime$,
\begin{equation}
\notag
v^{[a, b^\prime]} (x) > \max 
\left( v^{[a,b]}(x), v^{[a^\prime,b^\prime]} (x) \right)
\qquad
\forall x \in ]a, b^\prime[ .
\end{equation}
Hence, $[a, b^\prime]$ satisfies \eqref{Eq local positive curve}.
\end{lemma}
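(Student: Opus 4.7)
The plan is to exploit that the differences $w_1 := v^{[a,b']} - v^{[a,b]}$ and $w_2 := v^{[a,b']} - v^{[a',b']}$ each satisfy the homogeneous linear ODE associated with \eqref{Eq ODE}, since the inhomogeneity $\Pi$ cancels. The first vanishes at $a$ and the second at $b'$, so uniqueness for homogeneous solutions forces $w_1(x) = w_1'(a)\,\phi_{12}(a,x)$ and $w_2(x) = w_2'(b')\,\phi_{12}(b',x)$. By Proposition \ref{P sign phi_12}, $\phi_{12}(a,x) > 0$ for $x > a$; and a short computation using $\Phi(b',x) = \Phi(x,b')^{-1}$ together with $\det \Phi > 0$ gives $\phi_{12}(b',x) = -\phi_{12}(x,b')/\det \Phi(x,b') < 0$ for $x < b'$. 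Hence the target inequality on $(a,b')$ reduces to showing $w_1'(a) > 0$ and $w_2'(b') < 0$, which by evaluating $w_1$ at $b'$ and $w_2$ at $a$ (and recalling that $v^{[a,b]}(a) = v^{[a',b']}(b') = 0$) is equivalent to verifying $v^{[a,b]}(b') < 0$ and $v^{[a',b']}(a) < 0$.

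To prove both inequalities at once, I would introduce the auxiliary function $u := v^{[a,b]} - v^{[a',b']}$, which also solves the homogeneous ODE. Since $v^{[a,b]} > 0$ on $(a,b) \ni a'$ while $v^{[a',b']}(a') = 0$, we have $u(a') > 0$; symmetrically $u(b) < 0$. The intermediate value theorem supplies a zero $c \in (a',b)$ of $u$, so that $u(x) = u'(c)\,\phi_{12}(c,x)$. The sign of $u$ at $a' < c$ forces $u'(c) < 0$ (using $\phi_{12}(c,a') < 0$), and then evaluation at $b' > c$ and $a < c$ yields $u(b') < 0$ and $u(a) > 0$. Translating, $v^{[a,b]}(b') = u(b') < 0$ and $v^{[a',b']}(a) = -u(a) < 0$, as required. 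Finally, $v^{[a,b']}$ is well defined since $\phi_{12}(a,b') > 0$ by Proposition \ref{P sign phi_12}, and $(a,b') = (a,b) \cup (a',b')$ because $a' \leq b$, so combining $v^{[a,b']} > \max\bigl(v^{[a,b]}, v^{[a',b']}\bigr)$ with the positivity of each original function on its own interval yields $v^{[a,b']} > 0$ throughout $(a,b')$, i.e.\ \eqref{Eq local positive curve} for $[a,b']$.

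The main obstacle is the sign analysis of $\phi_{12}(c,\cdot)$ on both sides of $c$: Proposition \ref{P sign phi_12} asserts positivity only when the second argument exceeds the first, so one must separately justify $\phi_{12}(c,x) < 0$ for $x < c$. This can be handled either by the inverse-matrix identity sketched above or by appealing to Lemma \ref{L sign phi_12}(c) after rewriting the homogeneous solution vanishing at $c$ in the basis $\{\phi_{11}(a,\cdot),\phi_{12}(a,\cdot)\}$ for an auxiliary base point $a < c$.
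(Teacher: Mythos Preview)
Your proof is correct and takes a genuinely different route from the paper's.

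The paper argues geometrically: it slides the initial slope, considering the family $v_{a,d}$ and choosing $\hat d$ to be the largest $d$ for which $v_{a,d}$ still touches $v^{[a',b']}$ somewhere on $[a',b']$. A tangency in the open interval would force $v_{a,\hat d}\equiv v^{[a',b']}$ by uniqueness, which is impossible, so the contact point must be $b'$; this identifies $v_{a,\hat d}$ with $v^{[a,b']}$ and simultaneously shows it dominates both $v^{[a,b]}$ (because $\hat d>(v^{[a,b]})'(a)$) and $v^{[a',b']}$ (because the curves meet only at $b'$, with the right derivative inequality there).

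Your approach is algebraic: the three pairwise differences solve the \emph{homogeneous} equation, hence each is a scalar multiple of $\phi_{12}$ based at any of its zeros. Everything then reduces to sign bookkeeping with Proposition~\ref{P sign phi_12} (extended to $\phi_{12}(c,x)<0$ for $x<c$ via the inverse-matrix identity $\Phi(c,x)=\Phi(x,c)^{-1}$) and one application of the intermediate value theorem to $u=v^{[a,b]}-v^{[a',b']}$. This makes the dependence on Proposition~\ref{P sign phi_12} completely explicit and is arguably more self-contained: no optimisation over $d$, no continuity/compactness argument to guarantee the maximum is attained. The paper's sliding argument, on the other hand, avoids having to analyse $\phi_{12}$ with reversed arguments and is closer in spirit to the ``raise the slope'' idea already used in Lemma~\ref{L local positive curve}, so it fits more naturally into the surrounding narrative.

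One small expository slip: in your first paragraph the parenthetical ``recalling that $v^{[a,b]}(a)=v^{[a',b']}(b')=0$'' is not what is used when evaluating $w_1$ at $b'$ and $w_2$ at $a$; there you need $v^{[a,b']}(b')=0$ and $v^{[a,b']}(a)=0$. The logic is unaffected.
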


\begin{proof}
Let
\begin{equation}
\notag
\hat d = \max \left\{ d \geq 0: v_{a,d}(x) = v^{[a^\prime,b^\prime]}(x) \text{ for some } x \in [a^\prime,b^\prime] \right\} .
\end{equation}
Notice that $\hat d > \left( v^{[a,b]} \right)^\prime (a)$, and therefore $v_{a,\hat d}(x) > v^{[a,b]}(x)$ for every $x>a$.
\\
By continuity, there is some $\hat x \in [a^\prime, b^\prime]$ such that $v_{a, \hat d}(\hat x) = v^{[a^\prime , b^\prime]}( \hat x)$.
If $\hat x \in ]a^\prime , b^\prime[$, then the maximality of $\hat d$ implies that $v_{a,\hat d}^\prime (\hat x) = \left( v^{[a^\prime, b^\prime]} \right)^\prime (\hat x)$.
Thus, by uniqueness of the solution of the ODE \eqref{Eq ODE} with given initial value and derivative, $v_{a,\hat d} = v^{[a^\prime, b^\prime]}$.
Since this is a contradiction, we conclude that $\hat x = b^\prime$ and $v^\prime_{a,\hat d} (b^\prime) < \left( v^{[a^\prime, b^\prime]} \right)^\prime (b^\prime)$. 
Therefore $v^{[a,b^\prime]} = v_{a, \hat d}$ and $ v_{a, \hat d}(x) > v^{[a^\prime, b^\prime]}(x)$ for every $x < b^\prime$.
\end{proof}

The Proposition \ref{P maximal interval} follows from the Lemmata above.

The Lemma \ref{L fusion of intervals} shows that if $\hat x$ lies in some interval satisfying \eqref{Eq local positive curve}, then the union of all intervals containing $\hat x$ and satisfying \eqref{Eq local positive curve} is a maximal interval for \eqref{Eq local positive curve}.
The fact that maximal intervals are pairwise disjoint is also an immediate consequence of Lemma \ref{L fusion of intervals}.

Fix $\hat x \in \mathcal L^+$.
Then, $v^\prime _{\hat x, 0} (x) = - \int_{\hat x}^x \frac{2 \Pi (z)}{\sigma(z)^2} \phi_{22}(z) dz < 0$ for every $x > \hat x$, sufficiently close to $\hat x$.
Therefore, $v_{\hat x, 0} (x) < 0$ for every $x > \hat x$, sufficiently close to $\hat x$, and Lemma \ref{L local positive curve} shows that $\hat x$ lies in some interval satisfying \eqref{Eq local positive curve}.
Conversely, if $[a,b] \subset I$ and $v^{[a,b]}(x) >0$ for every $x \in ]a,b[$, then the equality \eqref{Eq v nonhomogeneous} implies that $\int_x^b \frac{\Pi(z)}{\sigma (z)^2} \phi_{12}(z,b) dz >0$ for some $x \in [a,b[$.
Due to Proposition \ref{P sign phi_12}, this implies $]a,b[ \cap \mathcal L^+ \neq \emptyset $.

If $]a,b[ \subset I$ is maximal for \eqref{Eq local positive curve} then the Lemma \ref{L local positive curve} states that $v^{[a,b]}(x) \geq 0$ for every $x \in I$.
Conversely, any $[a,b] \subset I$ such that $v^{[a,b]}(x) \geq 0$ for every $x \in I$ must be maximal, since any non-negative $v^{[a^\prime, b^\prime]}$, with $a^\prime \leq a$ and $b^\prime \geq b$, must coincide with $v^{[a,b]}$ in at least two points and therefore, by Lemma \ref{L no double cross}, it must coincide with $v^{[a,b]}$.

It only remains to prove that if $]a,b[$ is maximal and $a=m$ or $b=M$, then $v^{[a,b]}$ is well defined and non-negative. Let $]a,b[$ be maximal for \eqref{Eq local positive curve}. 
For any compact intevals $[a_1,b_1]$, $[a_2,b_2]$ satisfying \eqref{Eq local positive curve}, such that $[a_1,b_1] \subset ]a_2,b_2[ $ and $ [a_2,b_2] \subset ]a,b[$, the Lemma \ref{L no double cross} implies that $v^{[a_1,b_1]}(x) < v^{[a_2,b_2]}(x)$ for every $x \in ]a_1,b_1[$.
Hence, for any monotonically increasing sequence of compact intervals $[a_n,b_n] \subset ]a,b[$ satisfying \eqref{Eq local positive curve}, such that $]a,b[ = \bigcup\limits_{n\in \mathbb N} [a_n,b_n]$, the function $v(x) = \lim\limits_{n \rightarrow \infty} v^{[a_n,b_n]}(x)$ is well defined, it is strictly positive in the interval $]a,b[$ and does not depend on  the particular sequence $[a_n,b_n]$.
Further, $v^{[a_n,b_n]}(x)$ and $\left( v^{[a_n,b_n]} \right)^\prime (x)$ converge uniformly on compact intervals.
Hence, $v$ must be a solution of the equation \eqref{Eq ODE} and $v(x) \geq 0 $ for every $x \in I$.

\subsection{Proof of Theorem \ref{T value function}}\label{SS T value function}

First, we will prove a version of Theorem \ref{T value function} under the stronger assumption:
\begin{assumption}
\label{Assumption integrable}
The functions $\frac{1}{\sigma^2}$, $\frac{\alpha}{\sigma^2}$, and $\frac{\Pi}{\sigma^2}$ are integrable with respect to the Lebesgue measure in $I$,
the sets $\left\{ x \in I: \Pi(x) >0 \right\}$ and $\left\{ x \in I: \Pi(x) <0 \right\}$ have both positive Lebesgue measure, and
\begin{equation}
\notag
\mathbb E_x \left[ \int_0^{\tau_I} e^{-\rho_t} \Pi^+ ( X_t) dt \right] < + \infty
\qquad \forall x \in I .
\end{equation}
\end{assumption}
Notice that, contrary to the local integrability required in Assumptions \ref{Assumption Engelbert}, \ref{Assumption r} and \ref{Assumption Pi}, global integrability implies that, for any interval $]a,b[ \subset I$, $v^{[a,b]}$ is well defined by expression \eqref{Eq vab} and $\lim\limits_{x \rightarrow a} v^{[a,b]} (x) = \lim\limits_{x \rightarrow b} v^{[a,b]} (x) = 0$, even if $a=m$ or $b=M$.
Thus, we can consider the compact interval $[m,M]$ instead of $I$.
Conversely, under the Assumptions \ref{Assumption Engelbert}, \ref{Assumption r} and \ref{Assumption Pi}, Assumption \ref{Assumption integrable} holds if we consider a compact subinterval $[a,b]\subset I$ instead of the whole interval $I$.

Under Assumption \ref{Assumption integrable}, the following verification theorem is quite easy to prove.

\begin{teo}
\label{T verification}
Suppose that Assumption \ref{Assumption integrable} holds, and let $v :[m,M] \mapsto [0,\infty[$ be a differentiable function with absolutely continuous derivative.
If $v$ is a solution of the Hamilton-Jacobi-Bellman equation \eqref{Eq HJB} with boundary conditions $v(m)=v(M) =0$, then $v $ coincides with the value function \eqref{Eq value function} in $I$.
\end{teo}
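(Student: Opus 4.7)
I would prove the two inequalities $V\leq v$ and $V\geq v$ on $I$ separately, by extending the It\^o--Tanaka/localization argument already used in Lemmas \ref{L integrability 1} and \ref{L integrability 2}.

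\textbf{Step 1 (the super-solution inequality $v\geq V$).} Fix $\tau\in\mathcal T$ and a localizing sequence $\{\theta_n\}$ of stopping times with $\theta_n\to\tau_I$ such that $\{X_{t\wedge\theta_n}\}_{t\geq 0}$ is a semimartingale. Since $v'$ is absolutely continuous, the It\^o--Tanaka (or It\^o--Meyer) formula applied to $e^{-\rho_t}v(X_t)$, combined with the occupation times formula (the occupation density exists and is absolutely continuous because $\sigma>0$, so Lebesgue-null sets are hit only on time-sets of measure zero), gives exactly the identity obtained in Lemma \ref{L integrability 1}. The HJB equation \eqref{Eq HJB} implies $rv-\alpha v'-\tfrac{\sigma^2}{2}v''\geq\Pi$ a.e.\ (because the minimum is 0 and $v\geq 0$), so taking expectations yields
\begin{equation*}
v(x)\;\geq\;\mathbb E_x\!\left[e^{-\rho_{\tau\wedge\theta_n}}v(X_{\tau\wedge\theta_n})\right]+\mathbb E_x\!\left[\int_0^{\tau\wedge\theta_n}\!e^{-\rho_s}\Pi(X_s)\,ds\right]\;\geq\;\mathbb E_x\!\left[\int_0^{\tau\wedge\theta_n}\!e^{-\rho_s}\Pi(X_s)\,ds\right].
\end{equation*}
Monotone convergence applied separately to $\Pi^+$ (finite in expectation by Assumption \ref{Assumption integrable}) and $\Pi^-$ (non-negative) lets me pass to the limit, so $v(x)\geq J(x,\tau)$. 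Taking the supremum over $\tau\in\mathcal T$ yields $v\geq V$.

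\textbf{Step 2 (attainment, $v\leq V$).} I would take the candidate stopping time $\tau^\star=\inf\{t\geq 0:v(X_t)=0\}$. On $[0,\tau^\star[$ the process $X$ lives inside $\{v>0\}$, and on this open set the HJB equation reduces to the ODE $rv-\alpha v'-\tfrac{\sigma^2}{2}v''=\Pi$ a.e. Repeating the It\^o--Tanaka calculation of Step 1 with equality, I obtain
\begin{equation*}
v(x)\;=\;\mathbb E_x\!\left[e^{-\rho_{\tau^\star\wedge\theta_n}}v(X_{\tau^\star\wedge\theta_n})\right]+\mathbb E_x\!\left[\int_0^{\tau^\star\wedge\theta_n}\!e^{-\rho_s}\Pi(X_s)\,ds\right].
\end{equation*}
To conclude $v(x)=J(x,\tau^\star)$ it suffices to show that the boundary term tends to $0$ and that the integral term converges to $\mathbb E_x[\int_0^{\tau^\star}e^{-\rho_s}\Pi(X_s)ds]$. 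For the integral, $\Pi^+$ is controlled by Assumption \ref{Assumption integrable}, and $\Pi^-$ is controlled by the finite bound obtained in Step 1 (so $|\Pi|$-integrability holds), allowing dominated convergence. For the boundary term I would use that $v$ is continuous on the compactification $[m,M]$ with $v(m)=v(M)=0$ (this is the key consequence of the global integrability assumption, which makes $v^{[a,b]}$ vanish at the endpoints even when $a=m$ or $b=M$), so $v$ is bounded, say by $C$; then $0\leq e^{-\rho_{\tau^\star\wedge\theta_n}}v(X_{\tau^\star\wedge\theta_n})\leq C\bigl(1+\int_0^{\tau_I}e^{-\rho_s}r^-(X_s)ds\bigr)$, which is integrable by Lemmas \ref{L existence of positive solution}--\ref{L integrability 1} applied to $g=r^-$. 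Pointwise convergence to $0$ follows from $v(X_{\tau^\star})=0$ on $\{\tau^\star<\tau_I\}$ and from $X_t\to\{m,M\}$ on $\{\tau^\star=\tau_I\}$ combined with $v(m)=v(M)=0$. Dominated convergence then closes the argument.

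\textbf{Main obstacle.} The routine algebraic part is the It\^o--Tanaka identity together with the sign of the HJB inequality; the genuine difficulty lies entirely in the vanishing of $\mathbb E_x[e^{-\rho_{\tau^\star\wedge\theta_n}}v(X_{\tau^\star\wedge\theta_n})]$ on the event $\{\tau^\star=\tau_I\}$, where $X$ may explode or merely accumulate at the boundary without ever hitting the stopping region. It is precisely here that the compactification trick enabled by Assumption \ref{Assumption integrable}---namely, that $v$ is continuous on $[m,M]$ and vanishes at both endpoints---is used in an essential way; without this hypothesis, one would have to work with the more delicate sequences $[a_n,b_n]\uparrow\,]a,b[$ that appear in Definition \ref{D maximal interval} and in the proof of Proposition \ref{P maximal interval}, which is exactly what the stronger assumption is introduced to avoid.
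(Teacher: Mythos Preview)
Your proof is correct and follows essentially the same route as the paper's: It\^o--Tanaka plus localization for the super-solution inequality, and the candidate stopping time $\tau^\star=\inf\{t:v(X_t)=0\}$ for attainment. The only difference is one of packaging: in Step 2 the paper simply invokes Lemma~\ref{L integrability 2} (whose proof already contains the $r^-$-domination argument for the boundary term that you spell out), whereas you re-derive that lemma inline. One small imprecision: your claim that ``$\Pi^-$ is controlled by the finite bound obtained in Step~1'' is not quite the right justification---the control of $\int e^{-\rho_s}\Pi^-(X_s)\,ds$ up to $\tau^\star$ comes from the \emph{equality} in Step~2 together with the uniform bound on the boundary term, not from the inequality of Step~1.
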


\begin{proof}
Let $V:I \mapsto \mathbb R$ be the value function.
By the argument used to prove the Lemma \ref{L integrability 1}, there is a sequence of increasing stopping times $\{\theta_n\}_{n\in\mathbb{N}}$, converging to $\tau_I$, such that
\begin{align*}
&
\mathbb E_x \left[ e^{-\rho_{\tau \wedge \theta_n}} v(X_{\tau\wedge\theta_n}) \right] =
v(x)+
\mathbb E_x \left[ \int_0^{\tau\wedge\theta_n} e^{-\rho_s} \left( -rv + \alpha v^\prime + \frac{\sigma^2}{2} v^{\prime \prime} \right) \circ X_s ds \right] =
\\ = &
v(x)-
\mathbb E_x \left[ \int_0^{\tau\wedge\theta_n} e^{-\rho_s} \left( rv - \alpha v^\prime - \frac{\sigma^2}{2} v^{\prime \prime} - \Pi \right) \circ X_s ds \right] -
\mathbb E_x \left[ \int_0^{\tau\wedge\theta_n} e^{-\rho_s} \Pi ( X_s) ds \right] 
\end{align*}
for every $x \in I$.
By assumption, $rv - \alpha v^\prime - \frac{\sigma^2}{2} v^{\prime \prime} - \Pi \geq 0$ and $v \geq 0$.
Hence,
\begin{align*}
0 \leq
\mathbb E_x \left[ e^{-\rho_{\tau \wedge \theta_n}} v(X_{\tau\wedge\theta_n}) \right] \leq
v(x) -
\mathbb E_x \left[ \int_0^{\tau\wedge\theta_n} e^{-\rho_s} \Pi ( X_s) ds \right] .
\end{align*}
Letting $n \to \infty$, the Lebesgue dominated convergence theorem guarantees that
\begin{align*}
\mathbb E_x \left[ \int_0^{\tau} e^{-\rho_s} \Pi ( X_s) ds \right] \leq v(x) .
\end{align*}
Since $\tau$ is arbitrary, this proves that $V \leq v$.

Now, fix $x \in I$ and let $\tau = \inf \{ t\geq 0 : v(X_t)=0 \}$.
If $v(x) >0$, then the Lemma \ref{L integrability 2} states that
\begin{align*}
\mathbb E_x \left[ \int_0^{\tau} e^{-\rho_s} \Pi ( X_s) ds \right] = v(x) ,
\end{align*}
and therefore, $V = v$.
\end{proof}

Under Assumption \ref{Assumption integrable}, the Theorem \ref{T value function} takes the following form:

\begin{teo}
\label{T existence HJB}
If Assumption \ref{Assumption integrable} holds, then the Hamilton-Jacobi-Bellman equation \eqref{Eq HJB} admits a solution with boundary conditions $v(m)=v(M)=0$.
This solution is given by the right-hand side of \eqref{Eq solution HJB}.
\end{teo}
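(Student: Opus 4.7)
My plan is to show that the function $V$ defined by the right-hand side of \eqref{Eq solution HJB} satisfies the hypotheses of the verification Theorem \ref{T verification}, from which the conclusion of Theorem \ref{T existence HJB} follows immediately. Concretely, I must verify that $V$ is non-negative on $I$, continuously differentiable with absolutely continuous derivative, solves \eqref{Eq HJB} almost everywhere, and satisfies $V(m)=V(M)=0$.

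First I would establish the regularity and the $C^1$ fit at the endpoints of the maximal intervals. Inside each $]a_k,b_k[$, $V$ agrees with $v^{[a_k,b_k]}$, which is by construction a Carath\'eodory solution of \eqref{Eq ODE} and hence $C^1$ with absolutely continuous derivative. Outside the union of maximal intervals, $V\equiv 0$ is smooth. The delicate point is the matching at the endpoints. When $a_k>m$, Proposition \ref{P V intervals} identifies $v^{[a_k,b_k]}$ with $v_{a_k,0}$; from \eqref{Eq v_ad}, together with $\phi_{12}(a_k,a_k)=0$ and $\phi_{22}(a_k,a_k)=1$, I read off $v_{a_k,0}(a_k)=0$ and $v_{a_k,0}'(a_k)=0$, matching the value and derivative of the zero function. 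The symmetric argument with $v_{b_k,0}$ handles $b_k<M$. When $a_k=m$ or $b_k=M$, the global integrability in Assumption \ref{Assumption integrable} allows evaluation of \eqref{Eq vab} at the boundary and yields the required zero limit. Because distinct maximal intervals are disjoint by Proposition \ref{P maximal interval}(a), these local checks patch together to give $V\in C^1(I)$ with absolutely continuous derivative.

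Next I would verify the pointwise variational inequality. Non-negativity of $V$ on $I$ is immediate from Proposition \ref{P maximal interval}(c), which asserts each $v^{[a_k,b_k]}$ is non-negative on all of $I$. For the first entry of the minimum in \eqref{Eq HJB}: if $x\in ]a_k,b_k[$ the ODE \eqref{Eq ODE} holds exactly so the expression vanishes; if $x\in I\setminus\overline{\bigcup_k ]a_k,b_k[}$, then $V$ vanishes in an open neighborhood of $x$, forcing $V'(x)=V''(x)=0$ and reducing the expression to $-\Pi(x)$. To obtain $-\Pi(x)\geq 0$ almost everywhere on this complement, I invoke Proposition \ref{P maximal interval}(b): every point of $\mathcal L^+$ lies in some maximal interval. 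Since almost every $x\in I$ is a Lebesgue point of $\Pi/\sigma^2$, almost every $x$ in the complement must satisfy $\Pi(x)\leq 0$; the countable set of endpoints $\bigcup_k\{a_k,b_k\}$ is Lebesgue-negligible and does not affect this.

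The boundary conditions $V(m)=V(M)=0$ either hold trivially, when $V$ vanishes near the corresponding endpoint, or reduce to taking the limit of \eqref{Eq vab}, which is zero under Assumption \ref{Assumption integrable}. Having verified all the hypotheses, Theorem \ref{T verification} identifies $V$ with the value function, confirming \eqref{Eq solution HJB}. The main obstacle, as I see it, is the almost-everywhere argument for $-\Pi\geq 0$ off the maximal intervals, which requires combining Proposition \ref{P maximal interval}(b) with the Lebesgue differentiation theorem for $\Pi/\sigma^2$; the $C^1$ fit at interior endpoints, which at first sight looks like a smooth-fit hypothesis to be imposed, is in fact automatic because Proposition \ref{P V intervals} collapses $v^{[a_k,b_k]}$ to $v_{a_k,0}$, a solution whose derivative vanishes at $a_k$ by definition.
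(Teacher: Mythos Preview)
Your argument is correct and follows the paper's approach: verify that the candidate is non-negative, $C^1$ with absolutely continuous derivative, satisfies \eqref{Eq HJB} a.e.\ (the ODE on each maximal interval, and $-\Pi\geq 0$ a.e.\ off their union via Proposition~\ref{P maximal interval}(b)), and vanishes at $m$ and $M$. Two small remarks: the $C^1$ fit at interior endpoints follows more directly from Proposition~\ref{P maximal interval}(c), since $v^{[a_k,b_k]}\geq 0$ on all of $I$ with an interior zero at $a_k$ forces the derivative there to vanish, which avoids the forward reference to Proposition~\ref{P V intervals} (whose proof in the paper comes after this theorem, though without circularity); and your closing appeal to Theorem~\ref{T verification} goes beyond what Theorem~\ref{T existence HJB} actually asserts---only that the candidate solves \eqref{Eq HJB}, not that it equals the value function---so that step is superfluous here, though harmless.
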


\begin{proof}
Let $\left\{ ]a_k,b_k[, \ k=1,2, \ldots \right\} $ be the collection of all maximal intervals for \eqref{Eq local positive curve}, and let $v:I \mapsto [0,+\infty[$ be the function defined by the right-hand side of \eqref{Eq solution HJB}.
\\
It can be checked that $v$ is continuously differentiable with absolutely continuous first derivative, and $\lim\limits_{x \rightarrow m^+} v (x) = \lim\limits_{x \rightarrow M^-} v (x) = 0$.
For almost every $z \in \bigcup \limits_k ]a_k,b_k[$, $v$ satisfies the differential equation \eqref{Eq ODE}. 
By the Proposition \ref{P maximal interval}, $\mathcal L^+ \subset \bigcup \limits_k ]a_k,b_k[$.
Therefore, for almost every $z \in I \setminus \bigcup \limits_k ]a_k,b_k[$:
\begin{align*}
r(z) v(z)-\alpha(z) v^\prime (z)-\frac{\sigma(z)^2}{2} v^{\prime \prime} (z)-\Pi(z) = - \Pi(z) \geq 0 .
\end{align*}
Hence, $v$ is a solution of the Hamilton-Jacobi-Bellman equation \eqref{Eq HJB}.
\end{proof}

The Theorem \ref{T value function} follows easily from Theorem \ref{T existence HJB}.
To see this, for every compact interval $[a,b] \subset I$, let
\begin{equation}
\notag
V^{[a,b]}(x) = \sup_{\tau \in \mathcal T, \ \tau \leq \tau_{[a,b]}} \mathbb E_x \left[ \int_0^\tau e^{- \rho_t} \Pi(X_t) dt \right] .
\end{equation}
This function is given by Theorem \ref{T existence HJB} with the interval $[a,b]$ replaced by $I$.
\\
For every stopping time $\tau \in \mathcal T$, and every monotonically increasing sequence $[a_n,b_n] \subset I$ such that $I= \bigcup\limits_{n \in \mathbb N} [a_n,b_n]$, the Lebesgue monotone convergence theorem states that
\begin{align*}
&
\lim_{n \rightarrow \infty} \mathbb E_x \left[ \int_0^{\tau \wedge \tau_{[a_n,b_n]}} e^{- \rho_t} \Pi^-(X_t) dt \right] =
\mathbb E_x \left[ \int_0^\tau  e^{- \rho_t} \Pi^-(X_t) dt \right] ,
\\ &
\lim_{n \rightarrow \infty} \mathbb E_x \left[ \int_0^{\tau \wedge \tau_{[a_n,b_n]}} e^{- \rho_t} \Pi^+(X_t) dt \right] =
\mathbb E_x \left[ \int_0^\tau  e^{- \rho_t} \Pi^+(X_t) dt \right] .
\end{align*}
Hence, the value function $V$ satisfies
\begin{equation}
\notag
V(x) = \lim_{n \rightarrow \infty} V^{[a_n,b_n]}(x) \qquad \forall x \in I .
\end{equation}
By Definition \ref{D maximal interval} and Proposition \ref{P maximal interval}, $V$ is given by \eqref{Eq solution HJB}.

\subsection{Proof of Proposition \ref{P V intervals}}\label{SS P V intervals}

Fix $a \in I$, $b \in ]a,M]$, and suppose that $]a,b[$ is maximal for \eqref{Eq local positive curve}.
By Proposition \ref{P maximal interval}, $v^{[a,b]} \geq 0$.
The proof of Proposition \ref{P maximal interval} shows that $v^{[a,b]}$ is a solution of the differential equation \ref{Eq ODE}, even in the case $b=M$.
Hence $v^{[a,b]} = v_{a,0}$ and {\it (a)} holds.
Fix $[a_1,b_1] \subset ]a,b[$, a compact interval satisfying \eqref{Eq local positive curve}.
Then, there is an interval $]a_2,b_1[$, maximal for \eqref{Eq local positive curve} when we consider the interval $]m,b_1[$ instead of $I$.
By the Proposition \ref{P maximal interval}, $v^{[a_2,b_1]}$ must be non-negative in $]m,b_1[$.
Hence, $v^{[a_2,b_1]}= v_{a_2,0}$.
By the considerations preceeding Theorem \ref{T verification}, $v_{a_2,0}(b_1) = 0$.
Since $v_{a_2,0}(x) >0 $ for every $x>a_2$ sufficiently close to $a_2$, it follows that there is some $a_3 \in \mathcal L^-$ arbitrarily close to $a_2$.
Thus, {\it (b)} also holds.

Now, fix $a \in I$, $b \in ]a,M]$, and suppose that {\it (a)} and {\it (b)} hold.
Let $a_n$ be a sequence as in {\it (b)}, and let $b_n= \inf \left\{ x >a_n : v_{a_n,0}(x) \leq 0 \right\}$.
Since $]a,b[ = \bigcup \limits_{n \in \mathbb N} ]a_n,b_n[$, the Lemma \ref{L fusion of intervals} guarantees that $]a,b[$ satisfies \eqref{Eq local positive curve}.
Due to Lemma \ref{L no double cross}, non-negativity of $v_{a,0}$ implies that $]a,b[$ is maximal for \eqref{Eq local positive curve}.

The proof for the case $b \in I$, $a \in [m,b[$ is analogous.

\bibliographystyle{plain}
\bibliography{myrefs}


\end{document}